\newtheorem{theorem}{Theorem}[section]
\newtheorem{lemma}[theorem]{Lemma}
\newtheorem{proposition}[theorem]{Proposition}
\newtheorem{remark}[theorem]{Remark}
\newtheorem{definition}{Definition}[section]
\newcommand{\vertiii}[1]{{\left\vert\kern-0.25ex\left\vert\kern-0.25ex\left\vert #1 
    \right\vert\kern-0.25ex\right\vert\kern-0.25ex\right\vert}}
\numberwithin{equation}{section}
\numberwithin{figure}{section}
\newcommand{\mc}[1]{{\mathcal #1}}
\newcommand{\bb}[1]{{\mathbb #1}}
\renewcommand{\epsilon}{\varepsilon}
\newcommand{\R}{\mathbb R}
\newcommand{\Z}{\mathbb Z}
\newcommand{\N}{\mathbb N}
\renewcommand{\P}{\mathbb P}
\newcommand{\T}{\mathbb T}
\newcommand{\E}{\mathbb E}
\newcommand{\Q}{\mathbb Q}
\renewcommand{\bar}{\overline}
\renewcommand{\tilde}{\widetilde}
\renewcommand{\hat}{\widehat}
\newcommand{\abs}[1]{\;\left\vert\;#1 \;\right\vert\;}
\newcommand{\normm}[1]{{\left\vert\kern-0.1ex\left\vert\kern-0.1ex\left\vert\; #1 \; \right\vert\kern-0.1ex\right\vert\kern-0.1ex\right\vert}}    
\newcommand{\cro}[1]{\left[#1\right]}
\newcommand{\pa}[1]{\left(#1\right)}
\newcommand{\gene}{\mathcal{L}}
\newcommand{\ang}[1]{\langle\!\langle #1 \rangle \!\rangle}
\renewcommand{\leq}{\leqslant}
\renewcommand{\geq}{\geqslant}
\newcommand{\uleft}{u_{-}}
\newcommand{\uright}{u_{+}}
\title{Stefan problem for a non-ergodic facilitated exclusion process}
\author{Oriane Blondel}
\address{Univ Lyon, CNRS, Universit\'e Claude Bernard Lyon 1, UMR5208, Institut Camille Jordan, F-69622 Villeurbanne, France}
\email{blondel@math.univ-lyon1.fr}
\author{Cl\'ement Erignoux}
\address{Inria, Univ. Lille, CNRS, UMR 8524 - Laboratoire Paul Painlev\'e, F-59000 Lille}
\email{clement.erignoux@inria.fr}
\author{Marielle Simon}
\address{Inria, Univ. Lille, CNRS, UMR 8524 - Laboratoire Paul Painlev\'e, F-59000 Lille}
\email{marielle.simon@inria.fr}
\thanks{C.E.\ would like to thank H. Lacoin and C. Landim for interesting discussions. M.S.\ thanks C. Canc\`es and M. Herda for very enlightening exchanges about the solutions to free boundary problems. C.E.\ gratefully acknowledges funding from the European Research Council (ERC) under the European Unions Horizon 2020 Program, ERC Consolidator Grant UniCoSM (grant agreement n\textsuperscript{o}724939). M.S.\ thanks Labex CEMPI (ANR-11-LABX-0007-01).  O.B.\ thanks the ANR projects LSD (ANR-15-CE40-0020) and MALIN (ANR-16-CE93-0003) and acknowledges support from INSMI (CNRS) through a PEPS JCJC grant. This project is partially supported by  the ANR grant MICMOV (ANR-19-CE40-0012) of the French National Research Agency (ANR), and it has also received funding from the European Research Council (ERC) under  the European Union's Horizon 2020 research and innovative program (grant agreement  n\textsuperscript{o}715734).}
\begin{document}

\begin{abstract}
We consider the \emph{facilitated exclusion process}, which is a non-ergodic, kinetically constrained exclusion process. We show that in the hydrodynamic limit, its macroscopic behavior is governed by a free boundary problem. The particles evolve on the one-dimensional lattice according to jump rates which are degenerate, since  they can vanish on non-trivial configurations and create distinct phases: indeed, configurations can be totally \emph{blocked} (they cannot evolve under the dynamics), \emph{ergodic} (they belong to an irreducible component), or \emph{transient} (after a transitive period of time they will become either blocked or ergodic). We additionally prove that the microscopic separation into blocked/ergodic phases fully coincides with the moving interface problem given by the hydrodynamic equation.
\end{abstract}

\maketitle

\section{Introduction}

In statistical physics, various types of (nonlinear)
partial differential equations have been derived from  underlying microscopic particle systems which belong to the class of stochastic lattice gases. This mathematical procedure is called  \emph{hydrodynamic limit}:  the macroscopic behavior is obtained \emph{via} a long-time and large-space scaling limit, see for instance \cite{KL} for a review on the subject. In particular, exclusion processes have attracted a lot of interest due to the variety and complexity of the results which have been obtained in the last decades, despite the simplicity of their description.  For these models, the hydrodynamic equations obtained
in the limit describe the evolution of the local density, which is conserved by the dynamics. These equations become even more interesting when they involve a phase change in the physical medium: in that case, the process of diffusion is
mathematically formulated as a \emph{Stefan problem} \cite{Stef}, or \emph{free boundary problem}.

Such macroscopic behavior can be naturally expected from \emph{kinetically constrained lattice gases,} or KCLGs, in which the configuration of particles must satisfy a local constraint in order for a particle to be able to jump. For such  models, one may predict distinct  behaviors of the system at density $\rho$, depending on whether the local constraint should typically be satisfied at density $\rho$. This, however, strongly depends on the specific mixing mechanisms of the models. According to a standard terminology, there are two types of kinetically constrained lattice gases (see \emph{e.g.}~\cite{CMRT}). In \emph{non-cooperative} KCLGs, a mobile cluster of particles of a given shape can move autonomously in the system (always respecting the kinetic constraint), and once it reaches a specific neighborhood of a particle, allow the latter to jump. The existence of such mobile clusters gives the system good mixing properties, so that their macroscopic behavior is described by diffusive equations with no phase separation. The model considered here, instead, is \emph{cooperative}, in the sense that no such mobile cluster exists. This generates intrinsic difficulties, and in our case distorts the equilibrium measures which are no longer product measures\footnote{Historically, KCLG were introduced in the physics litterature as reversible dynamics w.r.t.\ a product measure \cite{KA,ritortsollich}, to study the effect on relaxation of dynamical constraints as opposed to (equilibrium) thermodynamic interactions.}. 

In the first version of this paper, we asked whether it was possible to build a KCLG which would be at the same time cooperative, \emph{gradient} (in the sense that the generator is a discrete Laplacian), and \emph{reversible} with respect to product measures. The motivation to build such a model came from the fact that most of the interesting macroscopic phenomenology of KCLG's comes from their cooperative nature, and also from the effort involved in studying non-gradient or non-reversible models (w.r.t.\ product measures). We already knew that
\begin{itemize}
 \item [--] the  \emph{Kob-Andersen model} \cite{CMRT} is cooperative and reversible,
 \item [--] the KCLG whose macroscopic behavior is given by the \emph{porous medium equation} considered in \cite{GLT, BCSS} is gradient and reversible,
 \item [--] the \emph{facilitated exclusion process}  \cite{RPV} is cooperative and gradient.
\end{itemize}
The impossibility of combining all three characteristics was proved during the revision process by Shapira in the appendix of \cite{assaf}.


\subsection{The facilitated exclusion process} In this paper we consider the last model which has been mentioned above, namely the facilitated exclusion process, introduced in \cite{RPV} and further investigated in \cite{BBCS,BM2009,BESS,O,Lubeck}. Its dynamics can be described as follows: on the periodic domain $\T_N$, we associate independently with each site a random Poissonian clock  ringing at rate $2$. 
When the clock at site $x$ rings, if the site $x$ is occupied, the particle chooses one of its neighbors $x\pm1$ to jump to, 
each one with probability $\frac12$. However, the jump does not systematically occur, but follows two rules, (i) the \textit{exclusion principle}: if the target site $x\pm1$ is already occupied, then the jump is canceled, and (ii) a \textit{dynamical constraint}: if the other neighbor $x\mp 1$
is empty, then the jump is canceled. 
In other words, a particle, in order to jump, needs to be ``pushed'' to an empty site by a neighboring particle.

Contrarily to the vast majority of exclusion processes considered in the literature, the grand canonical measures of this process are not products of Bernoulli measures: on the one hand, the strong dynamical constraint creates a phase transition at the critical density $\frac12$. Precisely, if the equilibrium density $\rho$ satisfies $\rho > \frac12$, then there is a unique invariant measure $\pi_\rho$, while if $\rho \leqslant \frac12$, all the Dirac measures concentrated on  configurations which cannot evolve under the dynamics are invariant.  On the other hand, $\pi_\rho$ is not a product measure, but presents non-trivial correlations (which however decay exponentially fast, as proved in \cite[Section 6.3]{BESS}). Another technical issue is that the facilitated exclusion process itself is not attractive, though it can be mapped to an attractive zero-range process (see \eqref{eq:DefPi}).

At the macroscopic level, one naturally expects the same separation of phases. As conjectured in \cite{BESS}, the macroscopic behavior of this system is
described by the free boundary problem in which the same nonlinear diffusion
equation as in \cite{BESS} governs the evolution of the density in the supercritical -- active -- phase $(\frac12,1]$, while there is no evolution in the subcritical -- frozen -- phase $[0, \frac12]$.  As the frozen region is progressively filled from the growth of the active region, the latter grows and the frontier (or free boundary) between the two regions moves.
 More precisely, we show that, in the diffusive space/time scaling, the empirical density of particles is governed in the macroscopic limit by the (weak) solution to the following  Stefan problem\footnote{Uniqueness of the weak solution to \eqref{eq:stef} in the sense of Definition~\ref{Def:PDEweak} follows from the monotonicity of $\mathcal{H}$ \cite[Theorem 6, p.10]{Uchiyama}.}
\begin{equation}
\label{eq:stef} 
\partial_t\rho = \partial_u^2 \big(\mathcal{H}(\rho)\big), \qquad \text{with }\quad \mathcal{H}(\rho)=\tfrac{2\rho-1}{\rho} \mathbf{1}_{\{\rho > \frac12\}}, 
\end{equation}
where $\mathbf{1}_{\{\rho > \frac12\}}$ is the indicator function which equals 1 on the active phase $(\frac12,1]$ and 0 on the frozen phase $[0,\frac12]$.  The solution to \eqref{eq:stef} has very poor regularity properties, since it is generically discontinuous at the free boundary.
This hydrodynamic limit result (see Theorem \ref{thm:hydro} below) is the first main outcome of this article.

\subsection{Hydrodynamic limit} To derive the Stefan problem as stated in Theorem \ref{thm:hydro}, the presence of a phase transition prevents the use of standard methods, as the ones exposed in \cite{KL}.  Indeed, the presence of two phases whose stationary measures have disjoint support prohibits using the \emph{entropy method}, whose center argument relies on comparing the distribution of the process with a global reference measure. The finer \emph{relative entropy method} fails as well, because it requires the hydrodynamic limit to be smooth, which is not the case for the Stefan problem. Note that the extension of the relative entropy method to a parabolic differential equation proposed in \cite{BCSS} would also fail, since we are not able to construct a sufficiently \emph{good} approximation of the solution to our free boundary problem. 

In order to circumvent this difficulty, Funaki \cite{Funaki}, inspired by \cite{varadhan91}, exploits the concept of \emph{Young measures}. In his model (originally introduced in \cite{CS96}),  
two types of particles are present on the discrete lattice, ``ice'' particles which never
move, and ``water'' particles which evolve according to a speed-change exclusion process. They form two regions, and they interact only through the interfaces separating both regions. Funaki derives a Stefan problem by adapting Varadhan's idea coming from \cite{varadhan91} to his bi-phased model. 
One important ingredient to apply his strategy is to give a full characterization of the infinite volume stationary measures. For simple exclusion processes, this characterization follows from De Finetti's Theorem (\cite[Section 4.3]{liggett-intro}, \cite[Theorem 35.10, p.473]{Bill}). In \cite{Funaki}, the supercritical stationary measures are written as a mixture of canonical Gibbs measures using \cite{georgii}. In our case, Lemma \ref{lem:definetti} is obtained \emph{via} the mapping to a zero-range process and \cite{Andjel}.

Apart from \cite{Funaki}, other free boundary problems  have been derived from discrete microscopic models. In \cite{Tsu}, the author considers a generalized exclusion process with positive jump rates, reversible w.r.t.~product measures. He then argues that a tagged particle acts as a boundary between two phases and shows that its rescaled velocity converges to the solution of the implicit equation satisfied by the free boundary between two similar phases. \cite{CDMGP} investigates a simple exclusion process with injection and removal of mass at the boundaries, one of which is described as the right-most particle in the system rather than a fixed point in space. In \cite{LV}, the system of interest is described by two coupled simple exclusion processes with annihilating interaction at the contact point. 
The facilitated exclusion process stands apart because the two phases arise directly from the dynamics, rather than being implemented from the start in the definition of the model. The resulting Stefan problem is also more complex because it allows for a so-called \emph{mushy region}, i.e.~the frozen phase needs not be flat and featureless.

{One can also recall from \cite{GQ} that the occupancy set of the so-called \emph{internal DLA} grows according to a Stefan problem. The microscopic systems considered there are close to the zero-range process \eqref{eq:ZRgen} to which the FEP can be mapped, with the difference that the jump rates grow linearly with the number of particles on a given site. 

In \cite{demasietal}, the authors derive a \emph{two-phase} Stefan problem from a system of two exclusion processes (with different rates) in which particles of different type annihilate at a certain rate when they are on the same site. Contrary to ours, the process has product equilibrium measure and is amenable to the relative entropy method. \cite{hayashi} generalizes the process to allow different killing rates for the two types of particles, which leads to more complicated behaviors for the limiting PDE.

Let us finally mention the papers \cite{DT19}, \cite{Delarue}. These consider a somewhat reverse problem: the frozen phase invades a (supercooled) liquid phase. The former grows when a diffusing particle from the liquid phase comes in contact with the frozen region. The parameters in the frozen region are irrelevant, so that in the limit we have a one-phase Stefan problem with no mushy region. On the other hand, since the interface can travel at arbitrary large speeds in the microscopic world, the macroscopic equation can exhibit blow-up in finite time.
}

\subsection{Microscopic phases} As noted in \cite{BESS}, in addition to blocked and ergodic configurations, the facilitated exclusion process also presents \emph{transient} configurations with mixed features, contrary to \cite{Funaki}. It is clear that in finite volume they disappear in finite time, but it would be conceivable that in the hydrodynamic limit the process remains in this undecided state. It turns out that this does not happen. 

In \cite{BESS} we show that, if the initial density is larger than  the critical value $\frac12$, after a subdiffusive transition time of order $(\log N)^\alpha$, with high probability the system  enters the irreducible component -- if the initial configuration belongs to the class of so-called \emph{regular configurations} , which happens with high probability for reasonable initial conditions (see \cite[Section 4]{BESS}).  

In the present setting with two macroscopic phases, it is clear that this is no longer true. However, we can hope for the next best thing: that after a subdiffusive transition time, there is a way to split the system in two parts, one ergodic and the other blocked, that match the macroscopic super-- and subcritical phases. Since our hydrodynamic limit result is obtained in a weak sense, one cannot extract this information directly from Theorem~\ref{thm:hydro}. 
Therefore, we formulate this in an additional result,  Theorem \ref{thm:fronts}, which is the second main outcome of this paper. In order to state the desired property rigorously, we need a good notion of macroscopic interfaces, derived directly from the PDE \eqref{eq:stef}, which is given in Proposition \ref{ass:stronguniqueness}.  To prove that result, we use PDE techniques as such exposed in \cite{And,Meirmanov}. The problems of existence, regularity and uniqueness of solutions to Stefan problems have been  investigated for years, and always raise obstacles which are overcome by refined approaches: as the literature is huge, we give here only a partial list of works which treat similar equations as \eqref{eq:stef}, see for instance \cite{And, DKor, Kor, KorMoo, Meirmanov, Meir2}.

\subsection{Outline of the paper} In Section \ref{sec:model} we give a complete description of the microscopic dynamics, together with its main characteristics (presence of distinct phases), and we state the two main results (Theorem \ref{thm:hydro} and Theorem \ref{thm:fronts}). Section \ref{sec:young} is devoted to the proof of the hydrodynamic limit, following Funaki's proof based on Young measures. This strategy needs two main ingredients: the ergodic decomposition for the stationary measures (given in Lemma \ref{lem:definetti}), and a local law of large numbers reminiscent of the one-block estimate (given in Proposition \ref{prop:localerg}). In Section \ref{sec:HittingTime} we prove our second main result about the exact correspondence between the microscopic and macroscopic phases, by using ideas coming from \cite{BESS} in order to control the transition period of the microscopic system. We prove in the Appendix, for the sake of completeness,  several technical results, which do not contain important conceptual novelties, in particular the existence of macroscopic interfaces as stated in Proposition \ref{ass:stronguniqueness}.

\subsection{Notations}\label{sec:notations}
We collect here notations and conventions that we use throughout the paper. Since some of the results rely on \cite{BESS}, we will as often as possible keep the same notations.

First, $\N:=\{0,1,2,\cdots\}$ denotes the set of non-negative integers and $\N_*:=\N\backslash\{0\}$ the set of positive integers. For any finite set $\Lambda$ we denote by $|\Lambda|$ its cardinality. 

The parameter $N \in \bb N_*$ is always a scaling parameter and will go to infinity. We let $\T_N:=\Z/N\Z$ be the discrete torus of size $N$, which we will also write as $\{1,\dots,N\}$. Similarly, $\T:=\R/\Z=[0,1)$ is the one-dimensional continuous torus.
For an interval $\Lambda=[a,b] \subset \T$ or $\Lambda=[a,b] \subset \T_N$ of the discrete or continuous torus, we write $\min \Lambda=a$, $\max\Lambda=b$, even though the torus is not naturally ordered.

For any $\ell \in \N$ we set $B_\ell:=\{-\ell,\dots,\ell\}$ as the centered symmetric box of size $2\ell+1$, which can be seen as either a subset of $\T_N$ 
(if $2\ell+1\leqslant N$), or a subset of $\bb Z$. More generally, we define $B_\ell(x):=\{-\ell+x,\dots, \ell+x\}$ the box of size $2\ell+1$ centered at  $x$.
 Similarly, we set $\Lambda_\ell:=\{0,\ldots,\ell\}$ and $\Lambda_\ell(x):=\{x,\ldots,x+\ell\}$. 

We will consider  configurations of particles on discrete sets $A$, with $A$ either $\bb Z$, the discrete torus $\T_N$, or a finite box $\Lambda \Subset \bb Z$. These configurations are of \emph{exclusion type}, meaning that no more than one particle can occupy any site of the lattice. They are generically denoted by $\eta \in \{0,1\}^A$. In particular, we denote by $\Sigma_N:=\{0,1\} ^{\T_N}$ the set of periodic configurations and by $\Sigma:=\{0,1\}^{\bb Z}$ the set of infinite ones.  For any $x\in A$ and configuration $\eta\in\{0,1\}^A$, we denote by $\eta_x\in\{0,1\}$ the particle number at site $x$. 
For any $\Lambda \subset \T_N$ (or $\Lambda\subset \bb Z$) the configuration $\eta \in \Sigma_N$ (or $\in \Sigma$)  restricted to $\Lambda$ is denoted by $\eta_{|\Lambda}$. We say that a function $f:\{0,1\}^\Z \to \R$ is \emph{local} if there exists $\Lambda$ a finite subset of $\bb Z$ such that $f(\eta)$ depends only on $\eta_{|\Lambda}$.  For any  probability measure $\pi$ on $\{0,1\}^{\Lambda}$,  and $f:\{0,1\}^\Lambda\to\bb R$ measurable function,  $\pi(f)$ denotes the expectation of $f$ w.r.t.~the measure $\pi$.  For any $f:\Sigma_N \to \R$ measurable, and $x\in\T_N$, we denote by $\tau_x f$ the  function obtained by translation as follows: 
 $\tau_x f(\eta):=f(\tau_x \eta)$, where $(\tau_x\eta)_y = \eta_{x+y},$ for any $y\in\bb T_N$.
 
 More generally, if $\mathcal{P}$ is a probability measure on a set $E$, and $f$ is a measurable function defined on $E$, we denote by $\mathcal{P}(f)$ the expectation of $f$ with respect to $\mathcal{P}$.

For any sequence $(u_k)_{k\in \N}$, possibly depending on other parameters than the index $k$, we will denote $\mc O_k(u_k)$ (resp.~$o_k(u_k)$) an 
arbitrary sequence $(v_k)_{k\in \N}$ such that there exists a constant $C>0$ (resp.~a vanishing sequence $(\varepsilon_k)_{k\in \N}$) -- 
possibly depending on the other parameters -- such that 
\[\text{for all } k \in \N, \quad |v_k|\leq C |u_k|\quad (\mbox{resp. } |v_k|\leq |u_k|\varepsilon_k).\] We will omit the subscript $k$ when clear from context.

A function $f:I \times \T \to \R$, where $I \subset \R_+$ is an interval, is in  $C^{\alpha,\beta}(I \times \T)$ if it is of class $C^\alpha$ in the first variable, and of class $C^\beta$ in the second variable. If $f$ is defined on a neighborhood of $x$, we write $f(x^+)$ (resp. $f(x^-)$) for $\lim_{y \to x, y>x} f(y) =: \lim_{y \to x^+} f(y) $ (resp. $\lim_{y \to x, y <x} f(y) =: \lim_{y \to x^-} f(y)$).

\section{Model and results} 
\label{sec:model}

\subsection{The microscopic dynamics}
\label{sec:Modeldynamics}

Let us first introduce the \emph{facilitated exclusion process} described in the introduction, which  is a Markov process on the set of periodic configurations $ \eta \in \Sigma_N=\{0,1\}^{\T_N}$.

The infinitesimal generator ruling the evolution in time of this Markov process is given by $\mathcal{L}_N$, which acts on  functions $f:\Sigma_N \to \R$ as 
\begin{equation}
\label{eq:DefLN}
\mathcal{L}_Nf(\eta):=\sum_{x\in\T_N}c_{x,x+1}(\eta)\big(f(\eta^{x,x+1})-f(\eta)\big),
\end{equation}
where $\eta^{x,y}$ denotes the configuration obtained from $\eta$ by swapping the values at sites $x$ and $y$, 
namely $(\eta^{x,y})_x=\eta_y$, $(\eta^{x,y})_y=\eta_x$ and $(\eta^{x,y})_z=\eta_z$ if $z\neq x,y$. Moreover,  the jump rates $c_{x,y}(\eta)$ translate the \emph{exclusion rule} (no more than one particle at each site) and \emph{dynamical constraint} (a particle needs to be pushed to an empty site) as follows:
\begin{equation} \label{eq:rate} c_{x,x+1}(\eta)=\eta_{x-1}\eta_x(1-\eta_{x+1})+(1-\eta_{x})\eta_{x+1}\eta_{x+2}.\end{equation}
Let us recall the main properties of this model, which have been already detailed  in \cite{BESS}: first, the dynamics conserves the total number of particles $\sum_{x\in\T_N}\eta_x$. Elementary computations yield that the following local conservation law holds: for any $x\in \T_N$,
\[\mathcal{L}_N\eta_x=j_{x-1,x}-j_{x,x+1},\]
where the instantaneous current $j_{x,x+1}=-c_{x,x+1}(\eta)(\eta_{x+1}-\eta_x)=\tau_{x}h-\tau_{x+1} h$, is the discrete gradient of the local function
\begin{equation}
\label{eq:defh}
h(\eta)=\eta_{-1}\eta_0+\eta_0\eta_1-\eta_{-1}\eta_0\eta_1.
\end{equation}
Since it satisfies this last property, the facilitated exclusion process considered here is a \emph{gradient model}. It is also \emph{degenerate}, since the jump rates can vanish for non trivial configurations.

Fix an initial density profile $\rho^{\rm ini}:\bb T\to [0,1]$.
We will consider, as initial condition, a random configuration of particles which is distributed according to a non-homogeneous Bernoulli product measure on $\Sigma_N$ fitting $\rho^{\rm ini}$, defined as 
\begin{equation}
\label{eq:DefmuN}
\mu^N(\eta):=\prod_{x\in \bb T_N}\big(\rho^{\rm ini}(\tfrac x N)\eta_x+(1-\rho^{\rm ini}(\tfrac x N))(1-\eta_x)\big).
\end{equation}
The \emph{invariant measures} of this process have been deeply investigated in \cite[Section 6]{BESS}. Due to the strong dynamical constraint, they are not independent products of homogeneous Bernoulli measures (as it is often the case for exclusion processes), but they can be made fully explicit. Moreover, there is a critical density $\rho_\star$ (given in the next section) such that, if the density is bigger than $\rho_\star$, then there is a unique invariant measure, while all the invariant measures are superpositions of atoms if the density is less than $\rho_\star$. More details will be given in Section \ref{sec:invariant-measures}. 

\begin{remark}[On the initial distribution $\mu^N$]
Proving the hydrodynamic limit result (Proposition \ref{thm:hydro} below)  only requires the convergence in distribution of the empirical density at initial time, namely,
\[\frac1N\sum_{x\in\T_N}\varphi(\tfrac x N)\eta_x(0)\xrightarrow[N\to\infty]{} \int_{\T}\varphi(u)\rho^{{\rm ini}}(u)du,\] for any test function $\varphi$,  where the above convergence holds in probability under $\mu^N$. 
However, in the second part, in the investigation of the creation of  microscopic fronts (Theorem \ref{thm:fronts} below), one requires some sharp decay of the correlations of the initial distribution. For the sake of clarity, we do not aim at having minimal assumptions on the initial distribution (which is not the main issue here) and choose as initial distribution the product measure \eqref{eq:DefmuN} throughout the paper.
\end{remark}

\subsection{Ergodic and frozen phases}

The facilitated exclusion process displays a \emph{phase transition}. Indeed, because of the microscopic jump constraint, pairs of neighboring empty sites cannot be created by the dynamics. In particular, assuming that initially, at least half of the sites are occupied, particles will diffuse in the microscopic system until there are no longer two neighboring empty sites. On the other hand, if initially at least half of the sites are empty, particles will diffuse until the moment when each particle is surrounded by empty sites and can no longer move. For this reason, given  $\Lambda\subset\Z$ or $\Lambda\subset \T_N$,  we  now introduce the set of \emph{ergodic} (resp. \emph{frozen}) configurations as:
\begin{equation}
\label{eq:ergset}
\mathscr{E}_\Lambda=\big\{\eta\in \{0,1\}^\Lambda\; ; \; \eta_x+\eta_{x+1}\geq1, \text{ for all } x\in \Lambda \mbox{ such that }x+1\in \Lambda\big\},
\end{equation}
namely the set of configurations where all empty sites are isolated, 
resp.
\begin{equation}
\label{eq:froset}
\mathscr{F}_\Lambda=\big\{\eta\in \{0,1\}^\Lambda\; ; \; \eta_x+\eta_{x+1}\leq1, \text{ for all } x\in \Lambda \mbox{ such that }x+1\in \Lambda\big\},
\end{equation}
namely the set of configurations where all particles are isolated. An example of an element belonging to each set is given in Figure \ref{fig:setEF}.

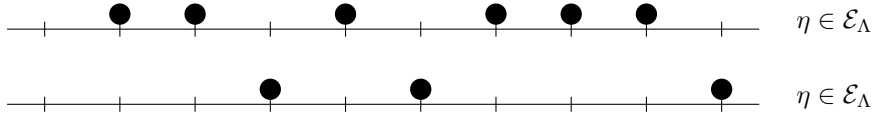
\begin{figure}[h]
\centering
\begin{tikzpicture}
\node at (1,0.5) {\color{white}x};
\draw (-1,0) -- (9,0);
\foreach \i in {-1,...,8}
{
\draw (\i+0.5,-0.1) -- (\i+0.5,0.1);
}
\node[circle,fill=black,inner sep=1mm] at (0.5,0.2) {};
\node[circle,fill=black,inner sep=1mm] at (3.5,0.2) {};
\node[circle,fill=black,inner sep=1mm] at (5.5,0.2) {};
\node[circle,fill=black,inner sep=1mm] at (1.5,0.2) {};
\node[circle,fill=black,inner sep=1mm] at (6.5,0.2) {};
\node[circle,fill=black,inner sep=1mm] at (7.5,0.2) {};

\node at (10,0.1) {$\eta \in \mathcal{E}_\Lambda$};

\draw (-1,-1) -- (9,-1);
\foreach \i in {-1,...,8}
{
\draw (\i+0.5,-1.1) -- (\i+0.5,-0.9);
}
\node[circle,fill=black,inner sep=1mm] at (4.5,-0.8) {};
\node[circle,fill=black,inner sep=1mm] at (8.5,-0.8) {};
\node[circle,fill=black,inner sep=1mm] at (2.5,-0.8) {};
\node at (10,-0.9) {$\eta \in \mathcal{E}_\Lambda$};
\end{tikzpicture}
\caption{Example of configurations belonging to the ergodic and frozen sets, with $|\Lambda| =10$.}
\label{fig:setEF}
\end{figure}

At the macroscopic level,  this means that there are two distinct regimes for the behavior of this model\footnote{Note however that $\mathscr{E}_\Lambda\cap\mathscr{F}_\Lambda$ is non-empty since it contains alternated \emph{particle/empty site} configurations.}. Either the macroscopic density is larger than  the critical value $\rho_\star:=\frac12$, 
in which case the system behaves diffusively, or the density is lower than $\frac12$,  in which case the system remains frozen after a transitive period during which the particles tend to isolate themselves. 
The interfaces between these two macroscopic phases move as particles from the supercritical phase ($\rho>\frac12$) diffuse towards the subcritical phase ($\rho<\frac12$).

As we already noted in \cite{BESS},  there are \emph{transitive} (or \emph{transient}) configurations, which are neither ergodic nor frozen ($\mathcal{E}_\Lambda \cup \mathcal{F}_\Lambda \neq \{0,1\}^\Lambda$). However, they are called \emph{transient} in \cite{BESS} because, if $\Lambda \Subset \bb Z$ is finite, then from these transient configurations the process will evolve toward $\mathcal{E}_\Lambda \cup \mathcal{F}_\Lambda$ after a number of particle jumps which is a.s.~finite. More precisely, in \cite{BESS} we show that, if the initial configuration of particles is distributed according to $\mu_N$ (defined in \eqref{eq:DefmuN}), with $\rho^{\rm ini}(\T) \subset (\frac12,1]$ (therefore, the initial density profile is uniformly larger than the critical density), then the microscopic system of size $N$ needs a subdiffusive time $t_N = o(N^2)$ in order to reach the ergodic component.

\subsection{Free boundary problem} 
In this section, we turn to the macroscopic point of view, and first give an explicit \emph{free boundary problem}, for which we explain what we intend by a solution.

\begin{definition}[Weak solution of the free boundary problem]
\label{Def:PDEweak}
For any $r\geq 0$, define the function
\begin{equation*} 
\mathcal{H}(r)=\frac{2r-1}{r}{\bf1 }_{\{r>\frac12\}},
\end{equation*}
and denote by $\langle f,g\rangle$ the inner product of $f$ and $g$ in $L^2(du)$ on $\bb T$. 

\medskip

Fix $T \geqslant 0$ and let $\rho^{\rm ini}:\T\to[0,1]$ be a measurable initial profile. 
We call a  measurable function $\rho:(t,u)\mapsto\rho_t(u)$ a \emph{weak solution to the free boundary problem}
\begin{equation}
\label{eq:PDEstrong}
\partial_t\rho=\partial_u^2\big(\mathcal{H}(\rho)\big)
\end{equation}
with initial condition $\rho_0=\rho^{\mathrm{ini}}$, if: \begin{itemize}
\item for any $(t,u) \in [0,T]\times \T$,  $\rho_t(u)\in [0,1]$,
\smallskip
\item and for any test function  $\varphi\in C^{1,2}([0,T]\times \T)$ 
\begin{equation}
\label{eq:weakF1}
\big\langle \rho_T,\varphi_T \big\rangle = \big\langle \rho^{\rm ini}, \varphi_0 \big\rangle+\int_0^T\big\langle \rho_t,\partial_t\varphi_t \big\rangle dt +\int_0^T \big\langle \mathcal{H}(\rho_t),\;  \partial_u^2\varphi_t \big\rangle dt.
\end{equation}
\end{itemize}
\end{definition}

\begin{remark} Let us briefly comment on the function $\mathcal{H}$. As pointed out in Section \ref{sec:Modeldynamics}, the generator of the process acts as a discrete Laplacian, in the sense that $\gene \eta_x=\tau_{x+1}h+\tau_{x-1}h-2\tau_{x}h$. In the frozen phase, the function $h$ vanishes. However, in the ergodic phase, and under the equilibrium measure $\pi_\rho$ at density $\rho$ (defined in Definition \ref{def:gcm} below), its average equals $\int h(\eta) d\pi_\rho(\eta)=\mathcal{H}(\rho)$. 
\end{remark}

Intuitively, the configurations evolving according to \eqref{Def:PDEweak} should be separated in two phases: a region of density above $\frac12$, where the macroscopic evolution is given by an elliptic equation, and a (frozen) region of density below $\frac12$ which is gradually filled by the spreading of the supercritical region. Making this picture rigorous is not immediate, given the weakness of the above notion of solution, even for ``simple'' initial density profiles, as we explain below.

\begin{proposition}[Uniqueness of weak solutions \cite{Uchiyama}]
\label{prop:uniqueness}
There exists  a unique weak solution of \eqref{eq:PDEstrong} with initial condition $\rho_0=\rho^{\rm ini}$, in the sense of Definition \ref{Def:PDEweak}.
\end{proposition}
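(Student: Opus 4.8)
My plan is to exploit the single structural property the footnote singles out, namely that $\mathcal{H}$ is nondecreasing on $[0,1]$: it vanishes on $[0,\tfrac12]$ and equals $2-\tfrac1r$ on $(\tfrac12,1]$, hence it is continuous, strictly increasing and $4$-Lipschitz there. With this in hand I would run the classical $H^{-1}$ contraction estimate for filtration equations $\partial_t\rho=\partial_u^2\mathcal{H}(\rho)$. Let $\rho^1,\rho^2$ be two weak solutions in the sense of Definition~\ref{Def:PDEweak} with the same initial datum, and write $w_t:=\rho^1_t-\rho^2_t$ and $W_t:=\mathcal{H}(\rho^1_t)-\mathcal{H}(\rho^2_t)$. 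Testing \eqref{eq:weakF1} against $\varphi\equiv1$ gives $\langle\rho^i_T,1\rangle=\langle\rho^{\rm ini},1\rangle$, and (after the time-continuity upgrade discussed below) $\langle\rho^i_t,1\rangle=\langle\rho^{\rm ini},1\rangle$ for all $t$, so $w_t$ has zero mean on $\T$ for every $t$. Consequently the periodic zero-mean inverse Laplacian $\Phi_t:=(-\partial_u^2)^{-1}w_t$ is well defined, and $\|w_t\|_{H^{-1}}^2:=\langle w_t,\Phi_t\rangle=\|\partial_u\Phi_t\|_{L^2}^2$.

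The heart of the argument is then the monotonicity (contraction) formula
\[
\frac{d}{dt}\,\tfrac12\|w_t\|_{H^{-1}}^2=\langle\partial_t w_t,\Phi_t\rangle=\langle\partial_u^2 W_t,\Phi_t\rangle=\langle W_t,\partial_u^2\Phi_t\rangle=-\langle W_t,w_t\rangle\leq0,
\]
using that $(-\partial_u^2)^{-1}$ is self-adjoint, two integrations by parts on $\T$, and $\partial_u^2\Phi_t=-w_t$. The final inequality holds because $\mathcal{H}$ is nondecreasing, so the integrand $\big(\mathcal{H}(\rho^1_t)-\mathcal{H}(\rho^2_t)\big)\big(\rho^1_t-\rho^2_t\big)$ is pointwise nonnegative. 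Since $w_0=0$, the $H^{-1}$ norm starts at $0$ and is nonincreasing, whence $\|w_t\|_{H^{-1}}=0$ and $\rho^1_t=\rho^2_t$ for a.e.\ $u$ and every $t$. I want to emphasize that only the sign $\mathcal{H}\nearrow$ enters — exactly the hypothesis isolated in the footnote — and that the degeneracy of $\mathcal{H}$ on $[0,\tfrac12]$ is completely harmless for this scheme.

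The one genuinely delicate point, which I expect to absorb most of the work, is that a weak solution carries no a priori time regularity: it is merely a bounded measurable field, discontinuous across the free boundary, so the chain rule above is only formal. Indeed $\Phi_t$ is not an admissible $C^{1,2}$ test function, and $\partial_t w$ lives a priori only in a negative Sobolev space. To make it rigorous I would first upgrade the weak formulation to the statement that $t\mapsto\langle\rho_t,\varphi\rangle$ is absolutely continuous for each fixed smooth $\varphi$ — the right-hand side of \eqref{eq:weakF1} is Lipschitz in its upper limit, since both integrands are bounded — which simultaneously provides a continuous-in-time representative and allows the final time $T$ to be replaced by an arbitrary $\tau\in[0,T]$. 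I would then justify the energy identity by time regularization: replace $w_t$ and $\Phi_t$ by Steklov averages $w^h_t:=h^{-1}\int_t^{t+h}w_s\,ds$ and $\Phi^h_t:=(-\partial_u^2)^{-1}w^h_t$, which are smooth enough in $t$ to be inserted into the weak formulation, compute $\frac{d}{dt}\tfrac12\|w^h_t\|_{H^{-1}}^2$ exactly, and pass to the limit $h\to0$ using $w\in L^\infty$, the continuity of $(-\partial_u^2)^{-1}$ on zero-mean $L^2(\T)$, and the fact that the monotone term keeps its sign under the limit.

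An equivalent and equally classical route — presumably the one behind the cited result of Uchiyama — is Oleinik's duality method: writing $W_t=a_t w_t$ with $0\leq a_t\leq 4$ (here both bounds are used, $a_t\geq0$ from monotonicity and $a_t\leq4$ from the Lipschitz bound), one solves a regularized uniformly parabolic adjoint problem $\partial_t\varphi^{\eps}+a^{\eps}_t\partial_u^2\varphi^{\eps}=0$ with $\varphi^{\eps}_\tau=g$, exploits the a priori bound $\int_0^\tau\!\!\int_\T a^{\eps}(\partial_u^2\varphi^{\eps})^2\,du\,dt\leq\tfrac12\|\partial_u g\|_{L^2}^2$, and balances the ellipticity floor $\eps$ against a mollification scale of $a$ to conclude $\langle w_\tau,g\rangle=0$ for all smooth $g$. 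I would nonetheless favour the $H^{-1}$ computation above, as it uses only the monotonicity of $\mathcal{H}$ and avoids the two-parameter limit inherent in the duality approach.
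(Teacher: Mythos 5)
Your argument is correct, but it is not what the paper does: the paper gives no proof at all, deferring entirely to Uchiyama \cite[Theorem 6]{Uchiyama}, whose sole structural hypothesis is the monotonicity of $\mathcal{H}$ --- exactly the property you isolate. Your primary route, the $H^{-1}$ contraction estimate, is sound for the uniqueness half: testing \eqref{eq:weakF1} with $\varphi\equiv 1$ gives the zero-mean property needed to invert the Laplacian, the formal identity $\tfrac{d}{dt}\tfrac12\|w_t\|_{H^{-1}}^2=-\langle \mathcal{H}(\rho^1_t)-\mathcal{H}(\rho^2_t),\,\rho^1_t-\rho^2_t\rangle\le 0$ uses only that $\mathcal{H}$ is non-decreasing, and the Steklov-averaging justification you sketch is the standard way to handle the absence of time regularity (you are right that this is where the work lies: \eqref{eq:weakF1} is stated only at the final time $T$, so the intermediate-time identity must first be extracted by localizing test functions in time, as you indicate). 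Your alternative route, Oleinik duality, is indeed the argument behind the cited result, and it is also precisely the computation this paper performs in Appendix~\ref{a:strong-existence} to prove the comparison principle of Lemma~\ref{lem:monotone}: a regularized adjoint problem $\partial_t\psi^\varepsilon=(\chi_{T-t}+\varepsilon)\partial_u^2\psi^\varepsilon$, the energy bound $\varepsilon\|\partial_u^2\psi^\varepsilon_T\|_{L^2}^2\le\tfrac12\|\partial_u g\|_{L^2}^2$, and H\"older. The $H^{-1}$ scheme buys a single limit in place of the two-parameter balancing inherent in duality; duality buys the comparison principle (not merely uniqueness), which the paper needs later. The one omission relative to the literal statement is existence: the proposition asserts that a weak solution \emph{exists} and is unique, and you prove only uniqueness. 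In this paper existence is unproblematic --- it follows from \cite{Uchiyama}, or independently from the proof of Theorem~\ref{thm:hydro}, where tightness produces limit points of the empirical measures and these are shown to be weak solutions --- but a self-contained proof should at least record this.
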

Since $\mathcal{H}:[0,1]\to [0,1]$ is a non-decreasing function, this proposition is already proved by Uchiyama, see \cite[Theorem 6, p.10]{Uchiyama}, and therefore we do not reproduce the proof here.

\subsection{Main results} 

As already noted in \cite{BESS}, the macroscopic behavior of the facilitated exclusion process appears in the \emph{diffusive} time scale. Therefore, we define $\{\eta(t) \; ; \; t\geq 0\}$ as the Markov process driven by the \emph{accelerated} infinitesimal generator $N^2\mathcal{L}_N$ 
 and initially distributed as $\mu^N$ (given in \eqref{eq:DefmuN}). 
Fix $T>0$ and denote by $\P_{\mu^N}$ the probability measure on the Skorokhod path space $\mc D([0,T],\Sigma_N)$ corresponding to this dynamics. 
We denote by $\E_{\mu^N}$ the corresponding expectation.  We emphasize that, even though it is not explicit in the notation, $\P$, $\E$ and $\eta(t)$ strongly depend  on $N$: 
through the size of the state space, but also through the diffusive time scaling.

\begin{theorem}[Hydrodynamic limit]
\label{thm:hydro}
Let $\rho^{\rm ini}:\T\to[0,1]$ be a measurable initial profile. For any $t \in [0,T]$, any $\delta >0$,  and any smooth test function $\varphi:\T\to\R$, we have
\begin{equation}\label{eq:limith} \lim_{N\to\infty} \P_{\mu^N}\bigg[\bigg|\frac1N\sum_{x\in\T_N}\varphi(\tfrac x N)\eta_x(t) - \int_{\T}\varphi(u)\rho_t(u)du\bigg|>\delta\bigg]=0,\end{equation}
where $\rho$ is the unique weak solution of \eqref{eq:PDEstrong} with initial condition $\rho_0=\rho^{\rm ini}$, in the sense of Definition \ref{Def:PDEweak}.
\end{theorem}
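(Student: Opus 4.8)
The plan is to follow the Young measure approach of Funaki, as the authors announce, since the presence of two phases with mutually singular stationary measures rules out both the entropy and relative entropy methods. The strategy replaces the empirical density by the \emph{empirical Young measure} $\pi^N(t,du,d\lambda)$, which records at each macroscopic point the local empirical density profile as a probability measure on $[0,1]$, thereby keeping track of local fluctuations without prematurely averaging them. Concretely, for a mesoscopic box of size $\ell$ (with $1 \ll \ell \ll N$), one considers the local average density $\rho^\ell_x(\eta) = \tfrac{1}{2\ell+1}\sum_{|y-x|\le \ell}\eta_y$ and builds a random measure on $\T \times [0,1]$ encoding the distribution of these block densities. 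The first step is to establish tightness of the laws of these Young measures in the appropriate topology and to extract limit points; this is standard and relies only on the compactness of $[0,1]$ and uniform bounds.

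\medskip

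\noindent\textbf{Characterization of limit points.} The core of the argument is to show that any limit point is concentrated on trajectories solving the free boundary problem \eqref{eq:PDEstrong} in the weak sense of Definition \ref{Def:PDEweak}. First I would write Dynkin's martingale for the test-function functional $t \mapsto \tfrac1N\sum_x \varphi(\tfrac xN)\eta_x(t)$: by the gradient structure recorded in \eqref{eq:defh}, the drift term involves $N^2$ times the discrete Laplacian applied to the local function $h$, which upon summation by parts produces $\tfrac1N\sum_x \partial_u^2\varphi(\tfrac xN)\,\tau_x h(\eta(t))$ up to negligible errors. The quadratic variation of the martingale is $O(1/N)$ and vanishes. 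The decisive reduction is then the \emph{local law of large numbers} (the one-block-type estimate, Proposition \ref{prop:localerg}): it must allow one to replace the local average of $h$ by $\mathcal{H}$ evaluated at the local empirical density, i.e.\ $\tau_x h(\eta(t)) \approx \mathcal{H}(\rho^\ell_x(\eta(t)))$. This is exactly where the phase transition enters: in the frozen phase $\rho \le \tfrac12$ the function $h$ vanishes on frozen configurations and $\mathcal{H}=0$, while in the ergodic phase the ergodic decomposition of the supercritical stationary measures (Lemma \ref{lem:definetti}, obtained via the zero-range mapping and \cite{Andjel}) identifies the conditional average of $h$ at density $\rho$ with $\mathcal{H}(\rho)=\tfrac{2\rho-1}{\rho}$.

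\medskip

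\noindent\textbf{Collapse of the Young measure and closing the equation.} After the replacement, the limiting identity reads, for each limit point, $\langle \rho_T,\varphi_T\rangle - \langle \rho^{\rm ini},\varphi_0\rangle - \int_0^T\langle \rho_t,\partial_t\varphi_t\rangle\,dt = \int_0^T \int_\T \partial_u^2\varphi_t(u)\,\overline{\mathcal{H}}_t(u)\,du\,dt$, where $\overline{\mathcal{H}}_t(u)$ is the barycenter of $\mathcal{H}$ against the limiting Young measure at $(t,u)$. The last task is to show that the Young measure is in fact a Dirac mass $\delta_{\rho_t(u)}$, so that $\overline{\mathcal{H}}_t(u)=\mathcal{H}(\rho_t(u))$ and \eqref{eq:weakF1} holds. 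This \emph{collapse} is the delicate point: away from the free boundary one expects local equilibrium to force concentration, but the genuine content is controlling the measure near the interface and on the critical set $\{\rho=\tfrac12\}$, where $\mathcal{H}$ is discontinuous. Once the equation is closed, uniqueness of the weak solution (Proposition \ref{prop:uniqueness}, via \cite{Uchiyama}) identifies the unique limit point, upgrading convergence in distribution of the Young measures to the convergence in probability of the empirical density stated in \eqref{eq:limith}, since a single deterministic limit makes distributional and probabilistic convergence coincide.

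\medskip

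\noindent\textbf{Main obstacle.} I expect the hardest step to be the local law of large numbers feeding the collapse, Proposition \ref{prop:localerg}. Unlike in the classical one-block estimate, one cannot compare against a single global reference measure because the two phases have disjoint supports; instead the replacement must be carried out phase by phase, using the explicit ergodic decomposition and the non-product, correlated structure of $\pi_\rho$, while simultaneously handling the transient configurations and the discontinuity of $\mathcal{H}$ at $\rho=\tfrac12$. Controlling the Young measure precisely on the set where the local density sits at the critical value, and ensuring no macroscopic mass of the limiting measure escapes into undecided transient states, is where the main analytic effort lies.
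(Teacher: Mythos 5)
Your architecture is exactly the paper's: Funaki's Young-measure scheme, with tightness, Dynkin's formula exploiting the gradient structure, the one-block-type replacement of $\tau_x h$ by $\mathcal{H}(\rho^\ell_x)$ (Proposition \ref{prop:localerg}, fed by the ergodic decomposition of Lemma \ref{lem:definetti} via the zero-range mapping and Andjel), and finally uniqueness of the weak solution to upgrade to convergence in probability. However, the step you flag as ``the delicate point'' --- the collapse of the Young measure --- is precisely the content of the proof, and your proposal does not supply the mechanism. The paper does not control the Young measure ``near the interface'' or ``on the critical set'' by any separate local analysis; instead it establishes a single global correlation identity (Lemma \ref{lem:eqpi}): every limit point $\bar{\mathcal{P}}^*$ satisfies
\begin{equation*}
\bar{\mathcal{P}}^*\Bigg(\int_{0}^T \int_{\bb T} \int_{[0,1]}\mathcal{H}(r) \bigg(r-\int_{[0,1]}r'p_t(u, dr')\bigg)p_t( u, dr)\, du\, dt \Bigg)=0.
\end{equation*}
Since $\mathcal{H}$ and $r\mapsto r$ are both non-decreasing, the integrand at each fixed $(t,u)$ is non-negative by the Chebyshev correlation inequality, and the equality case forces $\mathcal{H}$ to be constant on the support of $p_t(u,\cdot)$; as $\mathcal{H}$ is injective on $(\tfrac12,1]$ and vanishes on $[0,\tfrac12]$, this yields the dichotomy of Proposition \ref{prop:YM}. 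The identity itself is obtained by the Varadhan--Funaki two-scale heat-kernel argument: one compares the functional $\mathcal{V}^{N,\ell}_\tau$ convolved with $H^N_\tau$ at a large microscopic time $\tau=\Theta/N^2$ (which produces the term $\int r\,\mathcal{H}(r)\,p_t(u,dr)$) and at a small macroscopic time $\tau=\theta$ (which produces $\int\mathcal{H}(r)p_t(u,dr)\int r'p_t(u,dr')$), and shows via Dynkin's formula and the one-block estimate that the two limits coincide. Without this (or an equivalent two-blocks argument), your plan does not close.

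Two smaller points. First, you assert that one must show the Young measure is a Dirac mass $\delta_{\rho_t(u)}$ everywhere; this is neither proved nor needed. In the frozen region the limit Young measure is only shown to be supported in $[0,\tfrac12]$, which suffices because $\mathcal{H}$ vanishes there, so $\int\mathcal{H}(r)p_t(u,dr)=0=\mathcal{H}(\rho_t(u))$ in that case as well. Second, you locate the main difficulty in Proposition \ref{prop:localerg}; in the paper that proposition follows rather directly from the ergodic decomposition together with the exponential decay of correlations under $\pi_\rho$ (uniform away from $\rho=\tfrac12$, with a separate crude bound near $\tfrac12$ using $\pi_\rho(\eta_0\eta_1)\le 2(\rho-\tfrac12)$), whereas the genuinely hard ingredients are Lemma \ref{lem:definetti} itself (including the proof that stationary translation-invariant measures charge only $\mathscr{E}_\Z\cup\mathscr{F}_\Z$) and the heat-kernel comparison behind Lemma \ref{lem:eqpi}.
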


In order to prove Theorem \ref{thm:hydro}, we adapt a technique used by Funaki \cite{Funaki}, and inspired by \cite{varadhan91}, and  we exploit the concept of \emph{Young measures}, as explained in Section \ref{sec:young}. One of the drawbacks of this method, however, is that the hydrodynamic limit is proved in a rather indirect way, which says nothing about the  separation of the microscopic configuration into two phases matching the macroscopic ones. Therefore, we also prove in Theorem \ref{thm:fronts} below that after a subdiffusive time, the microscopic picture corresponds exactly to the macroscopic one, in two steps: first, with each subcritical or supercritical macroscopic phase can be associated a connected subcritical or supercritical microscopic box, and second, the microscopic interface between those two boxes is indeed close to the macroscopic one coming from the free boundary problem \eqref{eq:PDEstrong}. We emphasize that this result is not necessary to prove Theorem \ref{thm:hydro}, it is an independent outcome.

For our second  theorem, we need two extra assumptions on the initial profile $\rho^{\rm ini}$.
Let us introduce $\mathcal{C}^0:=(\rho^{\rm ini})^{-1}(\{\frac12\})\subset \T$ the set of \emph{critical points} of the initial density profile. Assume that
\begin{align}
\label{ass:1disc}
\tag{H1}
& \mathcal{C}^0\mbox{ is a finite set with cardinality $c(\rho^{\rm ini}) \in \N$,}
\\ 
\label{ass:2reg}
\tag{H2}
&\rho^{\rm ini}\in C^2(\bb T)\quad \mbox{ and }\quad \partial_u\rho^{\rm ini}(u)\neq 0 \quad \forall u\in \mathcal{C}^0.
\end{align}
In this setting, unfortunately, formalizing rigorously the notion of macroscopic interfaces for the solutions of $\eqref{eq:PDEstrong}$, and the corresponding microscopic features, can be rather cumbersome. We will therefore state and prove all our results in the case 
\begin{equation}
\label{ass:technical}
\tag{T1}
\rho^{\rm ini}<1,\quad |\mathcal{C}^0|=2 \quad \mbox{ and }\quad (\rho^{\rm ini})^{-1}([0,\tfrac12])=[0,u_*].
\end{equation}
In other words, we assume that there are  only two critical points:  $0\equiv 1$ and $u_*\in(0,1)$, the initial subcritical phase is the segment $[0,u_*]$, the initial supercritical phase is the complementary segment $[u_*, 1]$, and at no point in the supercritical phase is density $1$ reached. We stress out that this is not necessary for our proof to hold, and that both Definition \ref{def:frontcreation} and Theorem \ref{thm:fronts} below can be straightforwardly extended to any even integer $c(\rho^{\rm ini})$.

\begin{definition}[Two-phased configurations]
\label{def:two-phases}
We say that a configuration $\eta \in \Sigma_N$ is \emph{two-phased} if there exists a partition $\T_N=\mathbf{E} \sqcup \mathbf{F}$, such that $\mathbf{E}, \mathbf{F} \subset \Sigma_N$ are (possibly empty) connected subsets, and 
\[\eta(t)_{|\mathbf{E}}\in \mathcal{E}_{\mathbf{E}}\quad \mbox{ and }\quad \eta(t)_{|\mathbf{F}}\in \mathcal{F}_{\mathbf{F}},\] 
where the set of ergodic and frozen configurations were defined in \eqref{eq:ergset} and \eqref{eq:froset}.
We denote by $\mathfrak{P}_N$ the set of non-ergodic two-phased configuration, and for any $\eta\in\mathfrak{P}_N$, we denote $\mathbf{E}_\eta$ and $\mathbf{F}_\eta$ (the latter must be nonempty if $\eta$ is non-ergodic) the corresponding ergodic and frozen sets. To ensure uniqueness, we choose $\mathbf{E}_\eta$ to be \emph{maximal} for inclusion. 
\end{definition}

Note that a two-phased configuration remains two-phased or ergodic along the dynamics:
\begin{equation}\label{eq:two-phasedstable}
\text{if } \eta(s)\in \mathfrak{P}_N, \text{ then for all }t\geq s, \ \eta(t)\in \mathfrak{P}_N\cup\mathcal{E}_{\T_N}.
\end{equation}
Therefore, we can define

\begin{definition}[Microscopic fronts]
\label{def:frontcreation}
Assume that at a time $t$, $\eta(t)\in \mathfrak{P}_N$. Then we define 
\[u^N_-(t)=\min \mathbf{F}_{\eta(t)}\quad \mbox{ and }\quad u^N_+(t)=\max \mathbf{F}_{\eta(t)},\]
which correspond to the position of the microscopic fronts, with the convention that 
\[u^N_-(t) =\max_{\substack{s\leq t\\
\eta(s)\in \mathfrak{P}_N}}u^N_-(s)\quad \mbox{ and }\quad u^N_+(t) =\min_{\substack{s\leq t\\
\eta(s)\in \mathfrak{P}_N}}u^N_-(s)\]
if $\eta(t)\in \mathcal{E}_{\T_N}$ has already become ergodic. In other words, once the microscopic fronts have merged and the configuration becomes ergodic, we arbitrarily set the position of the microscopic fronts at the last site where the frozen set was non empty.
\end{definition}

Theorem \ref{thm:fronts} below states that the configuration becomes two-phased in a subdiffusive time with high probability. It also states that  the boundaries of the frozen set (\emph{i.e.}~the microscopic fronts as defined in Definition \ref{def:frontcreation}) are never far from the macroscopic interfaces. To state this result, we need to show that the latter are well defined. The following result is proved in Appendix~\ref{a:strong-existence}.

\begin{proposition}
\label{ass:stronguniqueness}
Assume Assumptions \eqref{ass:1disc}, \eqref{ass:2reg} and \eqref{ass:technical}.
 For any $T>0$, the weak solution $\rho$ of the free boundary problem \eqref{eq:PDEstrong} admits continuous \emph{macroscopic interfaces} $u_-,u_+:[0,T]\to\T$, respectively non-decreasing and non-increasing, satisfying $u_-(0)=0$, $u_+(0)=u_*$. Moreover, there exists $\tau\in\R_+\cup\{\infty\}$ such that
\begin{enumerate}
\item for any $t<\tau$, $u_-(t)\neq u_+(t)$, and  \[ \rho_t(u) \begin{cases} < \frac12  & \text{ if } u\in(u_-(t),u_+(t))\\ >\frac12 & \text{ if } u\in(u_+(t),u_-(t)); \end{cases}\]
\item if  $\tau<\infty$ then $u_-(\tau)=u_+(\tau)$;
\item for any $t\geq\tau$,  $\rho_t\geq \frac12$ on $\T$, and moreover $u_+,u_-$ are constant: $u_+(t)=u_-(t)=u_-(\tau)$.
\end{enumerate}  
\end{proposition}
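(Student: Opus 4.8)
The plan is to realize the unique weak solution $\rho$ as the gluing of a frozen profile and a classical solution of a one-phase Stefan problem, and then to read off the interfaces. The key algebraic fact is that $\mathcal H$ is smooth and strictly increasing on $(\tfrac12,1]$, with $\mathcal H'(r)=r^{-2}\in[1,4]$, so that \eqref{eq:PDEstrong} is \emph{uniformly parabolic} on $\{\rho>\tfrac12\}$ and totally degenerate ($\mathcal H\equiv0$) on $\{\rho\le\tfrac12\}$. On any space-time region where $\rho\le\tfrac12$ the weak formulation \eqref{eq:weakF1} reduces to $\partial_t\rho=0$, so the solution is frozen there: $\rho_t=\rho^{\rm ini}$. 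On $\{\rho>\tfrac12\}$ interior parabolic regularity makes $\rho$ smooth, and the strong maximum principle together with the boundary value $\tfrac12$ and the bound $\rho^{\rm ini}<1$ from \eqref{ass:technical} gives $\tfrac12<\rho<1$ strictly inside.

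First I would construct, for small $t$, a classical solution of the free boundary problem: two fronts $u_-(t),u_+(t)$ bounding a frozen arc on which $\rho=\rho^{\rm ini}$, an active arc on which $\rho$ solves the uniformly parabolic equation with $\rho=\tfrac12$ on the fronts and $\rho_0=\rho^{\rm ini}$, coupled through the Stefan (mass-balance) conditions $\dot u_\pm(t)\,\big(\tfrac12-\rho^{\rm ini}(u_\pm(t))\big)=\mp\,\partial_u\mathcal H(\rho)(t,u_\pm(t))$. This is where the main difficulty lies: at $t=0$ the fronts start at $0$ and $u_*$, which belong to $\mathcal C^0$, so $\rho^{\rm ini}(u_\pm(0))=\tfrac12$ and the effective latent heat $\tfrac12-\rho^{\rm ini}(u_\pm)$ vanishes, making the Stefan law singular. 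The transversality assumption \eqref{ass:2reg} ($\partial_u\rho^{\rm ini}\ne0$ on $\mathcal C^0$) provides a linear-in-displacement lower bound on the latent heat as the front penetrates the frozen region, and I would use it to close a contraction / parabolic-barrier argument (in the spirit of \cite{And,Meirmanov}) giving local existence and detachment of the fronts from $0$ and $u_*$, together with $C^1$ regularity of $u_\pm$.

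Next I would globalize and extract the qualitative properties. The sign of the flux at each front (the profile $\mathcal H(\rho)$ grows away from its zero boundary value into the active arc) forces $\dot u_+\le0$ and $\dot u_-\ge0$ through the Stefan conditions, so $u_+$ is non-increasing, $u_-$ non-decreasing, with $u_-(0)=0$, $u_+(0)=u_*$. The strong maximum principle prevents $\rho$ from returning to the level $\tfrac12$ inside the active arc, while the frozen profile cannot spontaneously exceed $\tfrac12$; hence no interior frozen spot nucleates and no interior active spot nucleates, the frozen region stays the single connected arc $[u_-(t),u_+(t)]\subseteq[0,u_*]$, and the construction continues as long as this arc is nondegenerate. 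Continuity of $u_\pm$ I would obtain from the continuity of $w:=\mathcal H(\rho)$ up to the free boundary: a jump of a front would activate a whole subinterval in zero time and force an unbounded flux $\partial_u w$. On the open frozen arc $(u_-(t),u_+(t))\subset(0,u_*)$ one has $\rho_t=\rho^{\rm ini}<\tfrac12$ strictly because $\mathcal C^0=\{0,u_*\}$, while $\rho_t>\tfrac12$ on the active arc, which is item (1).

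Finally, set $\tau:=\sup\{t:u_-(t)<u_+(t)\}\in\R_+\cup\{\infty\}$. For $t<\tau$ the frozen arc is nonempty and item (1) holds; by monotonicity and continuity the fronts coincide at $\tau$, giving item (2). For $t\ge\tau$ there is no frozen arc, so $\rho_\tau\ge\tfrac12$ on all of $\T$; the equation is then uniformly parabolic on the whole torus and the maximum principle propagates $\rho_t\ge\tfrac12$ forever, so no new frozen region appears and, by convention, $u_+(t)=u_-(t)=u_-(\tau)$, which is item (3). Since the object just built is a weak solution in the sense of Definition \ref{Def:PDEweak}, it coincides with $\rho$ by the uniqueness Proposition \ref{prop:uniqueness}, and all the above describes the weak solution of \eqref{eq:PDEstrong}. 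The crux, as indicated, is the local construction and front regularity near $t=0$, which is precisely why the $C^2$ and transversality hypotheses \eqref{ass:2reg} are imposed.
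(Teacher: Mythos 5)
Your overall architecture (realize $\rho$ as a one-phase Stefan problem glued to a frozen profile, read off monotone fronts, define $\tau$ as the merging time, and identify the construction with the weak solution via Proposition \ref{prop:uniqueness}) matches the paper's. But there is a genuine gap exactly at the step you yourself flag as the crux: the local construction near $t=0$. You assert that the transversality condition \eqref{ass:2reg} lets you ``close a contraction / parabolic-barrier argument'' giving local existence ``together with $C^1$ regularity of $u_\pm$,'' but you never say how. The obstruction is quantitative: the Stefan velocity is $\dot u_\pm = \mp\,\partial_u\mathcal H(\rho)(t,u_\pm(t))/\bigl(\tfrac12-\rho^{\rm ini}(u_\pm(t))\bigr)$, and at $t=0$ the denominator vanishes (the fronts start at critical points) while it is only \emph{linearly} small in the front displacement thereafter. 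The interface speeds therefore diverge as $t\to0^+$, so $u_\pm$ is not $C^1$ at $t=0$ and is not even a priori Lipschitz there; a fixed-point argument in a space of Lipschitz interface curves — which is how such local existence results are actually closed in \cite{And,Meirmanov} — does not apply to the degenerate datum, and you would need a compensating vanishing rate for the flux $\partial_u\mathcal H(\rho)$ at the front, which you do not establish.

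The paper circumvents this entirely by a different device: it perturbs the initial profile to $\rho^{{\rm ini},n}=\rho^{\rm ini}\bigl[1-\tfrac1n\mathbf 1_{\{\rho^{\rm ini}<\frac12\}}\bigr]$, creating a genuine jump across the interface so that the latent heat $\tfrac12-\rho^{{\rm ini},n}(u_\pm)$ is bounded below by a positive constant. For each $n$ a classical solution $(\rho^n,u^n_\pm)$ with $M$-Lipschitz fronts is then produced by a Schauder fixed point on the space of monotone Lipschitz interface pairs (Lemma \ref{lem:strong-existence-disc}). A separate comparison lemma (Lemma \ref{lem:monotone}, proved by a duality argument against an $\varepsilon$-regularized backward parabolic equation) shows the solutions are monotone in $n$, so one can pass to the limit and identify the limit with the weak solution by uniqueness; only the limiting fronts, not the approximants, exhibit the diverging initial speed. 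If you want to keep your direct route, you must actually construct barriers or a compensated fixed-point scheme handling the $t\to0^+$ singularity; as written, the step is an unproven assertion, and it is the only step that is not routine.
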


We are now ready to state our second main result.
\begin{theorem}
\label{thm:fronts}
Assume \eqref{ass:1disc}, \eqref{ass:2reg} and  \eqref{ass:technical}.
\begin{enumerate}
\item \emph{Creation of fronts}.
Letting $t_N=N^{-1/4}$,
\begin{equation*}
\lim_{N\to\infty} \P_{\mu^N}\pa{\eta(t_N)\in\mathfrak{P}_N}=1,
\end{equation*}
i.e. in a time of order $N^{-1/4}$, the microscopic configuration is two-phased with high probability.
\item \emph{Macroscopic match}. For any $t\in(0, \tau]\cap\R_+$,
\[\lim_{N\to\infty} \P_{\mu^N} \bigg(\Big| \tfrac 1 N u^N_\pm (t) - u_\pm(t)\Big| \geqslant \varepsilon \bigg) = 0,\]
where $\tau, u_+,u_-$ are defined in Proposition~\ref{ass:stronguniqueness} and $u^N_\pm$ in Definition \ref{def:frontcreation}. 
\end{enumerate}
\end{theorem}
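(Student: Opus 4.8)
The two assertions call for rather different tools, so I would treat them separately, using the stability property \eqref{eq:two-phasedstable} as the common backbone: once the configuration enters $\mathfrak{P}_N$ it never leaves $\mathfrak{P}_N \cup \mathcal{E}_{\T_N}$, so it suffices to prove that the two-phased structure is \emph{created} by time $t_N$, and then to \emph{track} the resulting fronts. The basic structural observation is that the constraint forbids the creation of adjacent empty pairs: a ring of the clock can only displace a hole (from $x+1$ to $x$ when $\eta_{x-1}=\eta_x=1$ and $\eta_{x+1}=0$), never split it, so the number of patterns $00$ is non-increasing along the dynamics, and a symmetric check shows the number of patterns $11$ is non-increasing as well. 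This monotonicity, together with the mapping to an attractive zero-range process mentioned around \eqref{eq:DefPi}, is what makes couplings and local relaxation arguments available despite the non-attractiveness of the FEP.

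\emph{Creation of fronts (part (1)).} The plan is to localize the transition-time analysis of \cite{BESS}. I would cover $\T_N$ by mesoscopic blocks of size $\ell_N$ with $1 \ll \ell_N \ll N$ and, recalling \eqref{ass:technical}, split them into three groups according to the position of their center relative to the macroscopic interface $u_* N$: deep subcritical blocks, deep supercritical blocks, and finitely many interface blocks around $u_* N$. On a deep supercritical block the empirical density is, with high probability under $\mu^N$, bounded away from $\tfrac12$ from above and the restricted configuration is \emph{regular} in the sense of \cite{BESS}; the transition-time estimate of \cite{BESS} (relaxation to the ergodic component in subdiffusive time, much shorter than the accelerated time $t_N = N^{-1/4}$, i.e.\ $N^{7/4}$ microscopic steps, yet $o(N^2)$ so that no macroscopic transport occurs) then drives the block into $\mathcal{E}$. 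On a deep subcritical block the symmetric argument, run on holes instead of particles, drives it into $\mathcal{F}$. The more delicate task is to pass from ``each block is locally ergodic or frozen'' to the \emph{global} two-phased structure of Definition \ref{def:two-phases}, namely a single \emph{connected} ergodic arc and a single connected frozen arc: here I would use that $00$ patterns cannot be created and can only be pushed towards the subcritical bulk, together with a control of the $O(\ell_N)$ interface region, to rule out stray frozen islands inside the supercritical phase and conversely. A union bound over the $O(N/\ell_N)$ blocks then yields $\P_{\mu^N}(\eta(t_N) \in \mathfrak{P}_N) \to 1$.

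\emph{Macroscopic match (part (2)).} Here I would feed part (1) and the hydrodynamic limit Theorem \ref{thm:hydro} into the rigidity of the macroscopic interfaces from Proposition \ref{ass:stronguniqueness}. Once $\eta(t)$ is two-phased, the frozen arc $\mathbf{F}_{\eta(t)}$ can only shrink as it is filled, so $u^N_-$ is non-decreasing and $u^N_+$ non-increasing, the same monotonicity as $u_-, u_+$. The match then follows from a sandwich argument: on $\mathbf{F}_{\eta(t)}$ the empirical density is $\leq \tfrac12$, while $\mathcal{E}$-type regions have density $\geq \tfrac12$; since by Proposition \ref{ass:stronguniqueness} one has $\rho_t < \tfrac12$ strictly on $(u_-(t), u_+(t))$ and $\rho_t > \tfrac12$ strictly outside, Theorem \ref{thm:hydro} forces the microscopic frozen arc to fill $\{\rho_t < \tfrac12\}$ and to avoid $\{\rho_t > \tfrac12\}$, both up to $o(N)$. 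Testing \eqref{eq:limith} against suitable smooth approximations of the indicator of the interval $(u_-(t), u_+(t))$ converts the weak convergence into the one-sided bounds $\tfrac1N u^N_-(t) \leq u_-(t) + \varepsilon$ and $\tfrac1N u^N_-(t) \geq u_-(t) - \varepsilon$ with high probability, and symmetrically for $u^N_+$; the continuity and strict crossing of $u_\pm$ given by Proposition \ref{ass:stronguniqueness} handle the region where $\rho_t$ is close to $\tfrac12$. At $t = \tau$ the two fronts have merged, and the convention of Definition \ref{def:frontcreation} together with the constancy statement (3) of Proposition \ref{ass:stronguniqueness} ensures the matching persists.

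\emph{Main obstacle.} I expect the genuine difficulty to lie in part (1), and specifically in two coupled points: obtaining the transition-time estimate in the present \emph{inhomogeneous} setting with the explicit scale $t_N = N^{-1/4}$ (the arguments of \cite{BESS} were carried out for a globally supercritical initial law, whereas here the density crosses $\tfrac12$ and the relaxation must be quantified block by block), and guaranteeing \emph{connectedness} of each phase, i.e.\ excluding microscopic frozen islands strictly inside the supercritical phase and ergodic islands inside the frozen phase. The non-local nature of the kinetic constraint means such islands cannot be ruled out by a purely local inspection, so the monotonicity of the $00$/$11$ patterns and the zero-range comparison will have to be leveraged carefully. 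Part (2), by contrast, should be comparatively soft, being essentially a deterministic consequence of part (1), Theorem \ref{thm:hydro}, and the interface rigidity of Proposition \ref{ass:stronguniqueness}.
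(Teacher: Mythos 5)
Your part \emph{(2)} is essentially the paper's argument: stability \eqref{eq:two-phasedstable}, monotonicity of the microscopic fronts, and testing Theorem \ref{thm:hydro} against a smooth minorant of $\mathbf{1}_{[u_-(t),u_-(t)+\varepsilon]}$ to contradict $\int_{I_\varepsilon(t)}\rho_t<\varepsilon/2$ when the interval is microscopically ergodic. That part is sound.

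Part \emph{(1)} contains a genuine gap, and it sits exactly where you flag the ``main obstacle'': your proposal does not supply the mechanism that resolves it. First, the block decomposition into deep supercritical / deep subcritical / interface blocks leaves the interface blocks --- where the density is within $O(\ell_N/N)$ of $\tfrac12$ --- entirely unhandled, and these are the whole difficulty: the reason $t_N=N^{-1/4}$ (i.e.\ $N^{7/4}$ microscopic time) is needed, rather than the much shorter transition time of \cite{BESS}, is that near the critical point the particle excess per site is only of order $\ell_K/K$, so relaxation there is not covered by the uniformly supercritical estimates of \cite{BESS}. The paper's resolution works in zero-range coordinates: it defines \emph{typical} configurations (Definition \ref{def:regZR}) whose condition \emph{(ii)} encodes, via the sets $\Omega^\pm_K$ with density $\alpha_K=1+c_*\ell_K/K$ and the signed-displacement functional $c_K(\omega)$, a quantitative excess guaranteed by the second-order Taylor expansion of $\rho^{\rm ini}$ at the critical points (this is where \eqref{ass:2reg} enters); Lemma \ref{lem:EK} then couples with a ``stuck'' zero-range process and runs a random-walk/martingale argument on $Z_j=\sum_x\chi_x(t_j)(x-5\ell_K)$ to show that at least one excess particle exits through the prescribed endpoint of every box touching $\mathbf{A}_K$. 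Nothing in your proposal produces this near-critical excess or the exit estimate. Second, your route to \emph{connectedness} --- monotonicity of the $00$/$11$ pattern counts --- is too weak: those counts are indeed non-increasing, but a global count cannot rule out a stray frozen island inside the supercritical arc or an ergodic island inside the frozen arc. The paper instead gets connectedness directly from the two lemmas: Lemma \ref{lem:EK} shows \emph{every} zero-range site of $\mathbf{A}_K$ is occupied at time $T_K$, and Lemma \ref{lem:FK} excludes a site with $\geq 2$ particles strictly between two empty sites of $\mathbf{B}_K$ via the conservation law that no zero-range particle can cross an empty site (so the density of the interval between two persistent holes is frozen at its initial, subcritical value --- condition \emph{(i)} of typicality), combined with the stuck-process exit-time bound of Lemma \ref{lem:ExitTime}. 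Your ``symmetric argument run on holes'' for the subcritical blocks has no analogue in the model (the dynamics is not particle--hole symmetric in any way that freezes a subcritical block), whereas this conservation argument is the actual, essentially deterministic, reason the frozen phase is connected.
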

The result actually also holds for $t_N=N^{-m}$ for any $m< \frac23$, but in order to focus on the important points of the proof, we choose simpler exponents in the required estimates, so that taking $t_N=N^{-1/4}$ is convenient. As will be shown in Section~\ref{s:proofth2}, point \emph{(2)} is actually a simple consequence of \emph{(1)} and the hydrodynamic limit result (Theorem \ref{thm:hydro}).

\begin{remark}[On assumption \eqref{ass:2reg}]
The regularity of the initial profile $\rho^{\rm ini}$ at the critical points is crucial to our proof. However, away from the critical points, the regularity assumption could be weakened. To focus on the important points of the proof, we settle for assumption \eqref{ass:2reg}.
\end{remark}

\section{Proof of Theorem \ref{thm:hydro}: Young measures and hydrodynamic limit}
\label{sec:young}

We prove in this section the hydrodynamic limit result stated in Theorem \ref{thm:hydro}, following the strategy given in \cite{Funaki}. 

\subsection{Empirical measure}
For any $t \in [0,T]$, let us define
\begin{equation}
\label{eq:DefmN}
m_t^N(du)=m^N(\eta(t),du):=\frac 1N \sum_{x\in \bb T_N}\eta_x(t)\delta_{x/N}(du),
\end{equation}
the empirical measure of the process, where $\delta_a(du)$ stands for the Dirac measure on $\T$ at point $a \in \T$. The measure $m_t^N$ is an element of the set $\mc M_+(\T)$ of positive measures on the torus $\T$, which we endow with the weak topology. We slightly abuse our  notation for the inner product in $L^2(\T)$, and also denote by $\langle m, \cdot \rangle$ the integral on $\T$ with respect to any measure $m(du)$.

Let us denote by $ \mathcal{P}_N$ the pushforward measure of $\bb P_{\mu^N}$ by the mapping 
$m^N$, namely $\mathcal{P}_N:=\bb P_{\mu^N}\circ \pa{m^N}^{-1}$. Then, $\mathcal{P}_N$ is a probability measure on the path space $\mc D([0,T], \mc M_+(\T))$, endowed with the Skorokhod topology. In order to prove Theorem \ref{thm:hydro}, we are  reduced to proving the convergence of the sequence $(\mathcal{P}_N)$ towards the Dirac probability measure concentrated on the solution of \eqref{eq:PDEstrong}.

We already know some properties of the sequence $(\mathcal{P}_N)$, which are quite standard in the literature:

\begin{proposition}[Absolute continuity w.r.t.~the Lebesgue measure]
\label{prop:Lebesgue}
The sequence $(\mathcal{P}_N)$ is weakly relatively compact in  $\mc D([0,T],\mc M_+(\T))$, and any of its limit points $ \mathcal{P}^*$ is concentrated on trajectories of measures $\{m_t(du) \; ; \; t \in [0,T]\}$ which are
\begin{enumerate}
\item continuous in time, i.e.
\begin{equation}
\label{eq:PropLeb1}
\mathcal{P}^*\pa{t\mapsto m_t\;\mbox{ is continuous}}=1,
\end{equation}

 \item and whose marginal at time $t$ is absolutely continuous w.r.t.~the Lebesgue measure on $\bb T$, i.e.
 \begin{equation}
\label{eq:PropLeb2}
\mathcal{P}^*\big(\forall \; t\in [0, T],\; \exists\; \rho_t:\T \to [0,1],\; s.t. \;  m_t(du)=\rho_t(u)du\big)=1.
\end{equation}
\end{enumerate}
In particular, these two assertions prove that 
\begin{equation*}
\mathcal{P}^*\big(\forall \; t\in [0, T],\; \exists\; \rho_t:\T \to [0,1] \text{ continuous in } t , \text{ s.t.}\; m_t(du)=\rho_t(u)du\big)=1.
\end{equation*}
\end{proposition}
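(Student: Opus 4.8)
The plan is to follow the classical Kipnis--Landim scheme (cf.~\cite{KL}), exploiting two special features of the model: the gradient structure with current equal to the discrete gradient of $h$ (see \eqref{eq:defh}), and the exclusion constraint $\eta_x\in\{0,1\}$. The three assertions are, respectively, a tightness statement, a control on jump sizes, and a consequence of the exclusion rule.

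First, for relative compactness, I would observe that $m_t^N(\T)=\frac1N\sum_{x}\eta_x(t)\le 1$, so every $m_t^N$ lives in the set of positive measures of total mass at most $1$, which is compact and metrizable for the weak topology. By the standard criterion reducing tightness of measure-valued càdlàg processes to tightness of the real projections, it then suffices to establish tightness in $\mathcal{D}([0,T],\R)$ of the processes $t\mapsto\langle m_t^N,\varphi\rangle=\frac1N\sum_x\varphi(\tfrac xN)\eta_x(t)$ for $\varphi$ in a countable dense family of smooth functions, via the Aldous--Rebolledo criterion. For each such $\varphi$, Dynkin's formula yields $\langle m_t^N,\varphi\rangle=\langle m_0^N,\varphi\rangle+\int_0^t N^2\mathcal{L}_N\langle m_s^N,\varphi\rangle\,ds+M_t^{N,\varphi}$ with $M^{N,\varphi}$ a martingale. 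Using $\mathcal{L}_N\eta_x=j_{x-1,x}-j_{x,x+1}$ and $j_{x,x+1}=\tau_x h-\tau_{x+1}h$, a double summation by parts rewrites the drift as $\frac1N\sum_x(\Delta_N\varphi)(\tfrac xN)\,\tau_x h(\eta_s)$, where $\Delta_N$ is the discrete Laplacian converging to $\partial_u^2$; since $h$ takes values in $\{0,1\}$, this drift is bounded by $\|\partial_u^2\varphi\|_\infty$ uniformly in $N$ and $\eta_s$. A direct carré du champ computation bounds $\langle M^{N,\varphi}\rangle_T$ by $\tfrac CN\|\partial_u\varphi\|_\infty^2\to 0$. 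These two estimates give the required control of the increments $\langle m^N_\cdot,\varphi\rangle$ over small time intervals, hence tightness.

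Second, for the continuity \eqref{eq:PropLeb1}, I would note that a single swap changes $\langle m_t^N,\varphi\rangle$ by at most $\frac1N|\varphi(\tfrac{x+1}N)-\varphi(\tfrac xN)|\le CN^{-2}$, so the maximal jump size of the trajectory $t\mapsto m_t^N$ vanishes as $N\to\infty$; any Skorokhod limit point is therefore concentrated on continuous trajectories. For the absolute continuity \eqref{eq:PropLeb2}, the exclusion constraint is decisive: for any continuous $\varphi\ge 0$ one has the deterministic bound $\langle m_t^N,\varphi\rangle\le\frac1N\sum_x\varphi(\tfrac xN)$, whose right-hand side converges to $\int_\T\varphi\,du$. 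Since limit points are continuous in time, for each fixed $t$ the map $m_\cdot\mapsto\langle m_t,\varphi\rangle$ is a.s.~continuous at $\mathcal{P}^*$-typical trajectories, so along the subsequence defining $\mathcal{P}^*$ the random variables $\langle m_t^N,\varphi\rangle$ converge in distribution to $\langle m_t,\varphi\rangle$; passing to the limit in the upper bound gives $\langle m_t,\varphi\rangle\le\int_\T\varphi\,du$ a.s. Intersecting over $t$ in a countable dense set and $\varphi$ in a countable dense family of nonnegative continuous functions, and using time-continuity to extend to every $t\in[0,T]$, we conclude that $m_t(du)\le du$ as measures, $\mathcal{P}^*$-a.s. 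Hence $m_t$ admits a density $\rho_t$ with $0\le\rho_t\le 1$ (positivity is automatic, and $\rho_t\le 1$ follows from $m_t\le du$). The final combined statement then results from choosing a jointly measurable version of $(t,u)\mapsto\rho_t(u)$, the continuity of the measure-valued path providing the stated continuity in $t$.

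The only step requiring genuine care is the tightness estimate, namely the gradient/summation-by-parts simplification of the drift and the quadratic-variation bound for $M^{N,\varphi}$; everything downstream is soft, the absolute continuity being a direct consequence of the exclusion rule $\eta_x\le 1$. I do not expect any of this to require the one-block estimate or the ergodic decomposition of the stationary measures, which enter only later when identifying the limit drift as $\mathcal{H}(\rho)$.
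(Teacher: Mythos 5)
Your proposal is correct and follows essentially the same route as the paper: the exclusion rule gives the uniform mass bound and the absolute continuity $m_t(du)\le du$, while tightness and time-continuity come from Dynkin's formula, the gradient identity $j_{x,x+1}=\tau_xh-\tau_{x+1}h$ turning the drift into a bounded discrete-Laplacian term, and the $\mc O(1/N)$ quadratic variation of the martingale. The only cosmetic difference is that the paper deduces continuity of limit trajectories from the uniform modulus-of-continuity estimate rather than from the vanishing maximal jump size, but both arguments are standard and interchangeable here.
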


This proposition will be proved in Appendix \ref{app:topo} for the sake of completeness, but it is standard. In many models, the proof of the hydrodynamic limit can be completed from there by using the \emph{entropy method}. However, for this model, this standard strategy fails because of the presence of supercritical and subcritical phases on which the time invariant measures are distinct and not absolutely continuous w.r.t.\ one another. For that reason, we now introduce  the concept of \emph{Young measures}, as given in \cite{Funaki}.

\subsection{Young measures and sketch of the proof}
 For that purpose, we need to introduce some notations. Given a configuration $\eta$, let us denote by
\begin{equation}
\label{eq:Defrhoell} 
\rho^\ell_x=\rho^\ell_x( \eta):=\frac{1}{2\ell+1}\sum_{y\in B_\ell(x)}\eta_y
\end{equation}
the local density in the box $B_\ell(x)$ of size $2\ell+1$ around  $x$ (defined in Section \ref{sec:notations}). When $x=0$, to simplify notations, we denote $\rho^\ell=\rho^\ell_0$. 
When $\eta$ is a time trajectory, and the density is observed at time $t$, we denote for the sake of clarity $\rho_x^\ell(t)=\rho^\ell_x (\eta(t))$ and $\rho^\ell(t)=\rho^\ell( \eta(t))$.
  
\begin{definition}[Young measure]
\label{def:Youngmeasure}
Let us fix an integer $\ell$.  The \emph{Young measure} $\pi^{N,\ell}$ on $\bb T\times [0,1]$ is given for any configuration $\eta$ of particles by
\[\pi^{N,\ell}(du,dr)=\pi^{N,\ell}(\eta, du,dr):=\frac{1}{N}\sum_{x\in \bb T_N}\delta_{x/N}(du)\;\delta_{\rho^\ell_x}(dr).\]
For any measure $\pi$ on $\bb T\times [0,1]$, any function $\xi$ defined on $\T$, and any function $\psi$ defined on $[0,1]$,
we denote by $\ang{\pi, \xi \cdot \psi}$ the integral of the function $(\xi \cdot \psi)(u,r):=\xi(u)\psi(r)$ w.r.t.~the measure $\pi$. 
\end{definition}

Similarly as before, let us define, for time trajectories, \[\pi_t^{N,\ell}:=\pi^{N,\ell}(\eta(t)).\]
 
\begin{remark}
Observe that, for any smooth function $\xi$ defined on $\T$, and taking $\psi(r)=r$, an integration by parts shows that there exists a constant $C(\xi)>0$ such that
\begin{equation}\label{eq:estimipp} \Big|\langle m_t^N, \xi \rangle - \ang{ \pi_t^{N,\ell}, \xi\cdot r } \Big| \leqslant C(\xi) \; \frac{\ell}{N}. \end{equation}
\end{remark}

We now define $\bar{\mathcal{P}}_{N,\ell}$ as the pushforward  measure of $\bb P_{\mu^N}$ by the mapping \[\big((m^N)^{-1}, (\pi^{N,\ell})^{-1}\big),\] namely for any measurable set  $\mathcal{B}$,
\[\bar{\mathcal{P}}_{N,\ell}\Big(\{m_t,\pi_t\}_{t\in [0,T]}\in\mathcal B\Big)=\bb P_{\mu^N}\pa{\{m_t^N,\pi_t^{N,\ell}\}_{t\in[0,T]}\in\mathcal B},\]
which is a probability measure on $\mc D\big([0,T],\mc M_+(\T)\times \mc M_+(\T\times [0,1])\big)$.
We first state a technical lemma.
\begin{lemma}
\label{lem:YM}
The sequence $(\bar{\mathcal{P}}_{N,\ell})_{1\leq \ell\leq N}$ is weakly relatively compact, and any of its limit points $\bar{\mathcal{P}}^*$ as $N\rightarrow\infty$ then $\ell\rightarrow\infty$\footnote{More precisely, by this expression we mean that we take limits of convergent subsequences as $N\rightarrow\infty$ for fixed $\ell$, then take a convergent subsequence of these objects as $\ell\rightarrow\infty$.} satisfies 
\begin{equation}
\label{eq:YM1}
\bar{\mathcal{P}}^*\bigg(\forall\; t \in [0,T],\; \exists\; \rho_t(\cdot), \; p_t(\cdot, dr), \text{ s.t. }\; \left\{ \begin{aligned} & m_t(du) =\rho_t(u)du \\ & \pi_t(du ,dr)=p_t(u,dr) du \end{aligned} \right. \bigg)=1.
\end{equation}

\end{lemma}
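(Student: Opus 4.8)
The plan is to split the statement into its two halves: the weak relative compactness of $(\bar{\mathcal{P}}_{N,\ell})$, and the identification \eqref{eq:YM1} of the structure of its limit points. I would treat the two marginals separately. The first marginal of $\bar{\mathcal{P}}_{N,\ell}$ is exactly $\mathcal{P}_N$, whose relative compactness and the absolute continuity $m_t(du)=\rho_t(u)du$ of its limit points are granted by Proposition \ref{prop:Lebesgue}. For the Young-measure marginal, the key structural fact is that for every configuration and every $t$ the measure $\pi_t^{N,\ell}$ is a \emph{probability} measure on the compact space $\T\times[0,1]$; hence all time-marginals live in a fixed compact subset of $\mathcal{M}_+(\T\times[0,1])$, so that compact containment is free. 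By a standard tightness criterion in the Skorokhod space (Jakubowski's, or Aldous'), it then suffices to establish tightness in $\mathcal D([0,T],\R)$ of the real processes $t\mapsto \ang{\pi_t^{N,\ell},F}$ for $F=\xi\cdot\psi$ with $\xi\in C^\infty(\T)$ and $\psi\in C^\infty([0,1])$, a class of observables separating points of $\mathcal M_+(\T\times[0,1])$.

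Granting relative compactness, I would then fix a limit point $\bar{\mathcal{P}}^*$ obtained by sending $N\to\infty$ at fixed $\ell$ and then $\ell\to\infty$. The statement about $m_t$ is inherited directly from Proposition \ref{prop:Lebesgue}, since the first marginal of $\bar{\mathcal{P}}^*$ is a limit point of $(\mathcal P_N)$. The statement about $\pi_t$ is genuinely soft and rests on a marginal computation: integrating out the $r$-variable in Definition \ref{def:Youngmeasure} gives the spatial marginal $\int_{[0,1]}\pi_t^{N,\ell}(du,dr)=\frac1N\sum_{x\in\T_N}\delta_{x/N}(du)$, which converges deterministically, and uniformly in $t$, to the Lebesgue measure $du$ on $\T$. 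Passing to the limit, the spatial marginal of $\pi_t$ equals $du$ for every $t$ (upgrading from a.e.\ $t$ to all $t$ via càdlàg regularity and density of continuity times, exactly as for $m_t$). Since $\pi_t$ is then a probability measure on $\T\times[0,1]$ with first marginal $du$, the disintegration theorem yields a measurable kernel $u\mapsto p_t(u,dr)$ of probability measures on $[0,1]$ with $\pi_t(du,dr)=p_t(u,dr)\,du$, which is precisely \eqref{eq:YM1}.

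The hard part is the time-regularity of the Young-measure observables $\ang{\pi_t^{N,\ell},\xi\cdot\psi}$ for \emph{nonlinear} $\psi$. For the linear case $\psi(r)=r$ the bound \eqref{eq:estimipp} ties this quantity to $\ang{m_t^N,\xi}$, whose regularity is already under control through the gradient structure $j=\nabla h$ of the current. For nonlinear $\psi$, however, the Dynkin decomposition of $N^2\mathcal{L}_N\ang{\pi^{N,\ell},\xi\cdot\psi}$ produces a carré-du-champ contribution from the two boundary edges of each box $B_\ell(x)$ that does not enjoy the summation-by-parts cancellation available for the drift of $\ang{m^N,\xi}$: a single boundary swap moves $\rho^\ell_x$ by $\pm(2\ell+1)^{-1}$, and for fixed $\ell$ the block densities relax on the vanishing macroscopic time scale $(\ell/N)^2$, so that $t\mapsto\ang{\pi_t^{N,\ell},\xi\cdot\psi}$ oscillates at high frequency. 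I therefore expect that one should not seek a genuine càdlàg limit of the path $t\mapsto\pi_t^{N,\ell}$ at fixed $\ell$, but rather phrase the relative compactness and the disintegration at the level of the space-time measure $dt\,\pi^{N,\ell}_t(du,dr)$ on the compact set $[0,T]\times\T\times[0,1]$, where relative compactness is immediate and the fast temporal fluctuations of the block densities are averaged out. The resulting a.e.-in-$t$ disintegration is then promoted to the pointwise statement \eqref{eq:YM1} using the time-continuity of $m_t$ from Proposition \ref{prop:Lebesgue} together with \eqref{eq:estimipp}. This reconciliation of the path-space formulation with the rapid local relaxation of $\pi^{N,\ell}$ is, in my view, the only delicate point of the lemma.
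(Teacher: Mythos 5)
Your identification of the limit points is exactly the paper's argument: the first marginal of $\bar{\mathcal{P}}_{N,\ell}$ is $\mathcal{P}_N$, so Proposition \ref{prop:Lebesgue} gives $m_t(du)=\rho_t(u)du$; for the Young component one passes to the limit in $\ang{\pi_t^{N,\ell},\xi\cdot 1}$, observes that the spatial marginal of $\pi_t$ is dominated by (in fact equal to) Lebesgue measure, and disintegrates to obtain $p_t(u,dr)$. The paper also makes your closing point in a slightly different form: since the marginal bound is purely deterministic (only the exclusion rule is used), the quantifier ``for all $t$'' can be moved inside the probability. Where you go beyond the written proof is on relative compactness of the Young-measure component, which the paper does not address at all. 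Your concern there is legitimate: for nonlinear $\psi$ the Dynkin decomposition of $\ang{\pi_t^{N,\ell},\xi\cdot\psi}$ enjoys no gradient cancellation, the drift and quadratic variation being of order $N^2/\ell^2$ and $N/\ell^2$ at fixed $\ell$, so path-space tightness in $\mc D([0,T],\mc M_+(\T\times[0,1]))$ is not a priori available without already invoking local equilibrium. Your fix --- replacing the path $t\mapsto\pi_t^{N,\ell}$ by the space-time measure $dt\,\pi_t^{N,\ell}(du,dr)$ on the compact set $[0,T]\times\T\times[0,1]$, where relative compactness is automatic, and recovering the pointwise-in-$t$ statement from the deterministic marginal identity together with the time regularity of $m_t$ --- is the standard device of Varadhan and Funaki, and it is all that is needed downstream, since Lemmas \ref{lem:eqpi}, \ref{lem:lim1} and \ref{lem:lim2} only ever use the limit points through time-integrated functionals. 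So your proposal is correct, coincides with the paper on the substantive identification step, and is more careful than the paper's own proof on the compactness step.
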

\begin{proof}[Proof of Lemma \ref{lem:YM}]
Since the first marginal of $\bar{\mathcal{P}}_{N,\ell}$ is ${\mathcal{P}}^N$, the fact that $\bar{\mathcal{P}}^*$--a.s., $m_t(du)$ is time continuous and absolutely continuous at 
every time $t$ w.r.t.~the Lebesgue measure is a direct consequence  of  Proposition \ref{prop:Lebesgue}. 
Moreover, $\bar{\mathcal{P}}^*$--a.s., this is also the case of $\pi_t(du ,dr)$, since  one can easily check after passing to the limit in $\ang{\pi_t^{N,\ell}, \xi\cdot 1}$ that, for any smooth function $\xi$ on $\bb T$
\[\int_{\bb T}\int_{[0,1]}\pi_t(du,dr) \xi(u)\leq\int_{\bb T} \xi(u) du,\]
which proves \eqref{eq:YM1}. Note that all those estimates are  deterministic, in the sense that the only used property  is the exclusion rule (at most one particle per site is allowed in the configuration). For this reason, the quantifier ``$\forall\; t \in [0,T]$'' can be inserted inside the probability, thus concluding the proof.
\end{proof}
We are now ready to state the main result of this section.
\begin{proposition}
\label{prop:YM}
The sequence $(\bar{\mathcal{P}}_{N,\ell})_{1\leq \ell\leq N}$ is weakly relatively compact, and any of its limit points $\bar{\mathcal{P}}^*$ as $N\to\infty$ then $\ell\to\infty$  satisfies 
\begin{equation}
\label{eq:YM2}
\bar{\mathcal{P}}^*\Big(\forall\; (t,u) \in [0,T] \times \T, \; \; p_t\big(u ,[0,\tfrac12]\big)=1, \; \mbox{ or }\;p_t(u ,dr)=\delta_{\rho_t(u)}(dr) \Big)=1,
\end{equation}
where $p_t$ and $\rho_t$ were defined ${\mathcal{P}}^*$--a.s. by \eqref{eq:YM1}.

In other words,  Young measures in $r$ either only charge the subcritical range of densities, or are trivial and given by a Dirac at $\rho_t(u)$.
Here, by limit point, we mean that we take any convergent subsequence as $ N\to\infty$, and then any convergent subsequence as $\ell\to\infty$.
\end{proposition}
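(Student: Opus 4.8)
The plan is to establish the dichotomy in \eqref{eq:YM2} by characterizing the limiting Young measure $p_t(u,dr)$ through the stationarity it inherits from the dynamics. The key conceptual point is that, after the $N\to\infty$ then $\ell\to\infty$ limits, the local empirical density $\rho^\ell_x$ relaxes, in the neighborhood of a macroscopic point $u$, to a (random) infinite-volume \emph{stationary} measure for the facilitated exclusion process. The ergodic decomposition of such stationary measures — Lemma \ref{lem:definetti}, obtained via the mapping to a zero-range process and De Finetti/Andjel's theorem — forces any such measure to be a mixture of the critical/subcritical frozen measures (supported on densities $r\le\frac12$) and the unique supercritical measures $\pi_\rho$ with $\rho>\frac12$. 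The Young measure $p_t(u,dr)$ records the distribution of the local density under this limiting stationary measure, and the decomposition says precisely that this distribution either charges only $[0,\frac12]$ (the frozen branch) or concentrates on a single supercritical value. To upgrade "a single value" to the Dirac $\delta_{\rho_t(u)}$ identified in \eqref{eq:YM1}, one observes that the first moment of $p_t(u,\cdot)$ must equal $\rho_t(u)$ by \eqref{eq:estimipp}, so the supercritical atom sits exactly at $\rho_t(u)$.}

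First I would make rigorous the passage from the microscopic Young measure to a stationary infinite-volume measure. The standard device is a \emph{local ergodicity / one-block} type estimate: using the local law of large numbers of Proposition \ref{prop:localerg}, I would show that for any bounded continuous $\xi$ on $\T$ and any suitable test function $\psi$ on $[0,1]$, the quantity $\ang{\pi^{N,\ell}_t,\xi\cdot\psi}$ can, after the double limit, be replaced by an average against the conditional law of the local configuration given its mean density. This realizes $p_t(u,dr)$ as the law, under an infinite-volume stationary measure, of the local density at the macroscopic location $u$. The stationarity is the crucial input that lets Lemma \ref{lem:definetti} apply; it must be extracted from the fact that $\bar{\mathcal P}^*$ is a limit of trajectories of the diffusively accelerated process, so that on the $\ell$-scale the measure is asymptotically invariant.

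I would then invoke the ergodic decomposition directly. By Lemma \ref{lem:definetti}, any translation-invariant infinite-volume stationary measure decomposes over the extremal ones, which are exactly the frozen atoms (densities in $[0,\frac12]$) and the supercritical $\pi_\rho$, $\rho\in(\frac12,1]$. Reading off the induced law of the local density gives the two alternatives of \eqref{eq:YM2}: either all mass lies in $[0,\frac12]$, i.e. $p_t(u,[0,\frac12])=1$, or the measure is supercritical and, being extremal and hence ergodic under translations, its local density is deterministic, giving $p_t(u,dr)=\delta_{c}(dr)$ for some $c>\frac12$; matching first moments via \eqref{eq:estimipp} forces $c=\rho_t(u)$. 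The statement "for all $(t,u)$" is justified as in Lemma \ref{lem:YM} because the underlying estimates are deterministic consequences of the exclusion rule, so the quantifier may be placed inside the probability.

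\textbf{The main obstacle} I anticipate is the rigorous extraction of stationarity at the mesoscopic scale, i.e. justifying that the limiting measure $p_t(u,dr)$ is indeed the local density distribution under an honest infinite-volume \emph{stationary} measure to which Lemma \ref{lem:definetti} applies. This is where the cooperative, non-product, non-attractive nature of the model bites: one cannot simply compare to a global reference measure (the very reason the entropy method fails here), and the usual replacement lemmas must be adapted to the degenerate rates. Concretely, the delicate step is controlling the time-averaged Dirichlet form / current to show that in the double limit the local profile is frozen ($h\equiv 0$) on the subcritical branch and genuinely stationary for $\mathcal L$ on the supercritical branch, so that the dichotomy — rather than some intermediate nonstationary law — is the only surviving possibility. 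Everything downstream (applying the ergodic decomposition, matching moments) is then comparatively routine.
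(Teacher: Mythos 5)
There is a genuine gap. The heart of the paper's proof of \eqref{eq:YM2} is Lemma \ref{lem:eqpi}, the identity
$\bar{\mathcal{P}}^*\big(\int_0^T\int_\T\int_{[0,1]}\mathcal{H}(r)(r-\int r'p_t(u,dr'))p_t(u,dr)\,du\,dt\big)=0$,
i.e.\ the vanishing of the covariance of $\mathcal{H}(r)$ and $r$ under $p_t(u,\cdot)$; combined with the equality case of Chebyshev's sum inequality for the non-decreasing function $\mathcal{H}$ (constant on the support of $p_t(u,\cdot)$, hence either $\mathcal{H}\equiv 0$ there, or a single supercritical atom by injectivity of $\mathcal{H}$ on $(\tfrac12,1]$), this yields the dichotomy, and only then does the moment-matching via \eqref{eq:estimipp} pin the atom at $\rho_t(u)$. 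Your proposal never produces this identity, nor any substitute for it: proving it requires the two-scale heat-kernel comparison of $\mathcal{V}^{N,\ell}_{\Theta/N^2}$ against $\mathcal{V}^{N,\ell}_{\theta}$ (Lemmas \ref{lem:lim1}--\ref{lem:limRkl}), which is the genuinely hard part of the section and which you flag as "the main obstacle" without resolving it.

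The alternative route you sketch — realize $p_t(u,dr)$ as the block-density law of an infinite-volume stationary measure and apply Lemma \ref{lem:definetti} — cannot deliver \eqref{eq:YM2} even if the stationarity issue were settled, for two reasons. First, stationarity in this argument only arises after space-time averaging (the measure $\bar\mu^N_T$ of \eqref{eq:Defmubar}); that is exactly what powers the one-block estimate of Proposition \ref{prop:localerg}, but the Young measure $\pi^{N,\ell}_t$ is built from the configuration at a \emph{fixed} time $t$, whose law is not stationary, so Lemma \ref{lem:definetti} does not apply to it. Second, and more decisively, even granting that $p_t(u,\cdot)$ were induced by a stationary translation-invariant measure, the decomposition \eqref{eq:decomperg} only gives $\bar\mu=\lambda\mu_{\mathcal F}+(1-\lambda)\int\varpi(d\rho)\pi_\rho$ with an \emph{arbitrary} mixing measure $\varpi$ on $[\tfrac12,1]$; the induced law of the block density is then $\lambda(\text{mass on }[0,\tfrac12])+(1-\lambda)\int\varpi(d\rho)\delta_\rho$, which is perfectly compatible with a non-trivial mixture of supercritical Diracs — precisely the situation that \eqref{eq:YM2} must exclude. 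Your appeal to extremality applies to each ergodic component separately, not to the mixture, so the conclusion "its local density is deterministic" does not follow. The correct parts of your write-up (the role of Lemma \ref{lem:definetti} as an input to the one-block estimate, the moment identification $c=\rho_t(u)$, and the placement of the quantifier "for all $(t,u)$" inside the probability) are peripheral; the central mechanism is missing.
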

Note that this proposition does not say anything about the function $\rho_t(u)$. However, we prove at the end of this paragraph that it is the weak solution of \eqref{eq:PDEstrong}.

  Proposition \ref{prop:YM} is a consequence of the following lemma.
\begin{lemma}
\label{lem:eqpi}
Any limit point $\bar{\mathcal{P}}^*$ as $N\to\infty$ then $\ell\to\infty$ of the sequence $(\bar{\mathcal{P}}_{N,\ell})_{1\leq \ell\leq N}$ satisfies 
\begin{equation}
\label{eq:rhorho'neg}
\bar{\mathcal{P}}^*\Bigg(\int_{0}^T \int_{\bb T} \int_{[0,1]}\mathcal{H}(r) \bigg(r-\int_{[0,1]}r'p_t(u, dr')\bigg)p_t( u, dr) du dt \Bigg)=0,
\end{equation}
where $\bar{\mathcal{P}}^*(\cdots)$ denotes the expectation w.r.t.\ $\bar{\mathcal{P}}^*$.
\end{lemma}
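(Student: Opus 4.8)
The plan is to read the inner $dr$-integral in \eqref{eq:rhorho'neg} as a \emph{covariance} of two non-decreasing functions under $p_t(u,\cdot)$, which fixes its sign, and then to realise the whole space–time integral as a limit of microscopic averages that the local law of large numbers of Proposition \ref{prop:localerg} makes computable. Since $\int_{[0,1]}\big(r-\int r'p_t(u,dr')\big)p_t(u,dr)=0$, for a.e.\ $(t,u)$ the integrand equals the centered covariance
\[
\int_{[0,1]}\mathcal{H}(r)\Big(r-\int_{[0,1]}r'p_t(u,dr')\Big)p_t(u,dr)=\tfrac12\int_{[0,1]}\int_{[0,1]}\big(\mathcal{H}(r)-\mathcal{H}(r')\big)(r-r')\,p_t(u,dr)\,p_t(u,dr'),
\]
which is non-negative because $\mathcal{H}$ and $r\mapsto r$ are both non-decreasing. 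Hence the integrand of \eqref{eq:rhorho'neg} is non-negative, and it suffices to prove that its $\bar{\mathcal{P}}^*$-expectation is $\leq 0$. This is also exactly the mechanism that turns Lemma \ref{lem:eqpi} into Proposition \ref{prop:YM}: a non-negative quantity of vanishing integral is a.e.\ zero, and a vanishing covariance forces $\mathcal{H}$ to be constant on the support of $p_t(u,\cdot)$; as $\mathcal{H}$ is strictly increasing on $(\tfrac12,1]$ and vanishes on $[0,\tfrac12]$, this means $p_t(u,\cdot)$ is a Dirac mass or is supported in $[0,\tfrac12]$, which is \eqref{eq:YM2}.

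To produce the two terms microscopically, I would use Definition \ref{def:Youngmeasure}: for any continuous $\Theta$, $\ang{\pi^{N,\ell}_t,1\cdot\Theta}=\tfrac1N\sum_x\Theta(\rho^\ell_x(t))$. Taking $\Theta(r)=\mathcal{H}(r)r$ and passing to the limit along $\bar{\mathcal{P}}_{N,\ell}\to\bar{\mathcal{P}}^*$ identifies the first term $\int\!\int\mathcal{H}(r)r\,p_t(u,dr)\,du$ as the limit of the \emph{diagonal} average $\tfrac1N\sum_x\mathcal{H}(\rho^\ell_x)\rho^\ell_x$. For the second term, note first that letting $N\to\infty$ in \eqref{eq:estimipp} gives $\int_{[0,1]}r'p_t(u,dr')=\rho_t(u)$ for a.e.\ $(t,u)$, so the second term is $\int_\T\mathcal{H}^*_t(u)\rho_t(u)\,du$ with $\mathcal{H}^*_t(u):=\int\mathcal{H}(r)p_t(u,dr)$; since $\rho^{\epsilon N}_x$ (the average over a box of size of order $\epsilon N$) approximates $\rho_t$ at macroscopic scale while the mesoscopic block-averages of $\mathcal{H}(\rho^\ell_x)$ converge to $\mathcal{H}^*_t$, this term is the limit of $\tfrac1N\sum_x\mathcal{H}(\rho^\ell_x)\rho^{\epsilon N}_x$. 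Thus, in the regime $1\ll\ell\ll\epsilon N\ll N$, the quantity in \eqref{eq:rhorho'neg} is the limit of
\[
\frac1N\sum_{x\in\T_N}\int_0^T\mathcal{H}(\rho^\ell_x(t))\big(\rho^\ell_x(t)-\rho^{\epsilon N}_x(t)\big)\,dt,
\]
a space–time average of the current-like weight $\mathcal{H}(\rho^\ell_x)$ against the difference of local densities at two scales.

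The crux is to show this last average is negligible. Because $\mathcal{H}$ is bounded by $1$, it would suffice to have a two-block estimate $\tfrac1N\sum_x\int_0^T|\rho^\ell_x-\rho^{\epsilon N}_x|\,dt\to0$; however the standard route to it through the entropy method is unavailable, since the subcritical and supercritical invariant measures have disjoint supports and there is no global reference measure to compare with. This is exactly where Proposition \ref{prop:localerg} (and, through it, the ergodic decomposition of the stationary measures, Lemma \ref{lem:definetti}) must be used: the weighted scale comparison is carried out by replacing, in the space–time average, the configuration-dependent quantities by their canonical expectations on boxes, the weight $\mathcal{H}(\rho^\ell_x)$ being treated as a bounded local observable. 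On the frozen range $\{\rho^\ell_x\leq\tfrac12\}$ the weight $\mathcal{H}(\rho^\ell_x)$ vanishes identically, so only the supercritical part contributes, where Proposition \ref{prop:localerg} applies.

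I expect the main obstacle to be precisely this weighted scale-comparison step in the presence of the phase transition: one cannot simply invoke the usual two-block estimate, and one must instead control the correlation between the nonlinear weight $\mathcal{H}(\rho^\ell_x)$ and the density fluctuation $\rho^\ell_x-\rho^{\epsilon N}_x$ using only the local (rather than global) information provided by Proposition \ref{prop:localerg}, while carefully respecting the order of limits $N\to\infty$, then $\ell\to\infty$, then $\epsilon\to0$, with $\ell\ll\epsilon N$ throughout. Once this is granted, combining the resulting bound $\leq 0$ with the a priori non-negativity from the covariance yields the identity \eqref{eq:rhorho'neg}.
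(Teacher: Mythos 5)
Your reduction is set up correctly: the covariance/monotonicity observation is sound (it is in fact how the paper deduces Proposition \ref{prop:YM} from Lemma \ref{lem:eqpi}), and your identification of the two terms of \eqref{eq:rhorho'neg} as limits of the diagonal average $\frac1N\sum_x\mathcal H(\rho^\ell_x)\rho^\ell_x$ and of the two-scale average $\frac1N\sum_x\mathcal H(\rho^\ell_x)\rho^{\epsilon N}_x$ matches the paper's Lemmas \ref{lem:lim1} and \ref{lem:lim2} (where the role of the box of size $\epsilon N$ is played by the heat kernel $H^N_\theta$ at a small macroscopic time $\theta$). However, there is a genuine gap at exactly the point you flag as "the crux": you assert that the weighted two-scale comparison can be "carried out by replacing the configuration-dependent quantities by their canonical expectations on boxes" using Proposition \ref{prop:localerg}, but that proposition is a one-block statement only --- it lets you replace the microscopic current average $\frac{1}{2\ell+1}\sum_z\tau_{x+z}h$ by $\mathcal H(\rho^\ell_x)$, and it says nothing about the correlation between $\mathcal H(\rho^\ell_x)$ and the density difference $\rho^\ell_x-\rho^{\epsilon N}_x$ across scales. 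Without an additional dynamical input your argument stops where the real work begins.

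The missing ingredient (Funaki's, after Varadhan) is an energy estimate on the heat-kernel-smoothed two-point function. One sets $\mathcal V^{N,\ell}_\tau$ as in \eqref{eq:DefV} and observes that, since $\partial_sH^N_s=\Delta^NH^N_s$, the difference between the macroscopic-scale and microscopic-scale limits is
\[
\mathcal V^{N,\ell}_{\theta}-\mathcal V^{N,\ell}_{\Theta/N^2}=\int_{\Theta/N^2}^{\theta}\mathcal T^{N,\ell}_s\,ds ,
\]
and then identifies $\mathcal T^{N,\ell}_s$, via Dynkin's formula, the gradient property of the model (Lemma \ref{lem:correlations}) and the one-block replacement of Proposition \ref{prop:localerg}, with the time increment $\mathcal R^{N,\ell}_s=\E_{\mu^N}[\langle m^{N,\ell}_T,m^{N,\ell}_T*H^N_s\rangle-\langle m^{N,\ell}_0,m^{N,\ell}_0*H^N_s\rangle]$ of a quadratic functional that is uniformly bounded by $1$; integrating a bounded quantity over $s\in[\Theta/N^2,\theta]$ and letting $\theta\to0$ at the very end forces the difference to vanish. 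This interpolation over the intermediate time scales $s$ of the heat semigroup is what substitutes for the unavailable two-block estimate, and it is absent from your proposal; the order of limits you prescribe ($N\to\infty$, $\ell\to\infty$, $\epsilon\to0$) cannot by itself produce the cancellation. To complete your argument you would need to supply this step (or an equivalent one), not merely invoke Proposition \ref{prop:localerg}.
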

We postpone the proof of this lemma to the end of the section, see Section \ref{sec:prooflemma} for the conclusion.  Before proving it, we show that Proposition \ref{prop:YM} follows, and then we prove Theorem \ref{thm:hydro}.

\begin{proof}[Proof of Proposition \ref{prop:YM}]
We now show \eqref{eq:YM2}. Since for any fixed $(t,u) \in [0,T] \times \T$, $p_t(u,\cdot)$ is a probability measure on $[0,1]$, 
and since $\mathcal{H}(r)=\frac{2r-1}{r}{\bf 1}_{\{r\geq \frac12\}}$ 
is non-decreasing on $[0,1]$, we have
\begin{multline*}
\int_{[0,1]}\mathcal{H}(r) \bigg(r-\int_{[0,1]}r' p_t(u, dr')\bigg)p_t( u, dr)  \\
\geq \int_{[0,1]}\mathcal{H}(r')p_t(u, dr')\int_{[0,1]} \bigg(r-\int_{[0,1]}r'p_t(u, dr')\bigg)p_t( u, dr) =0. 
\end{multline*}
This follows from the inequality $\int\int (f(x)-f(y))(g(x)-g(y))d\mu(x)d\mu(y)\geq 0$, valid for any measure $\mu$ on $\R$ if $f,g$ are non-decreasing. The equality case in the above inequality happens when $(f(x)-f(y))(g(x)-g(y))=0$ a.e. In our case, this means that $\mathcal{H}$ should be constant on the support of $p_t(u,dr)$.

Therefore, from \eqref{eq:rhorho'neg}, we obtain that 
almost everywhere  w.r.t.~the Lebesgue measure in $[0,T]\times \bb T$: \begin{itemize}
\item \emph{either}: $p_t(u,[0,\frac12])=1$ (if $p_t(u,dr)\circ\mathcal{H}^{-1}=\delta_0$),
\item \emph{or}: $p_t(u,[0,\frac12])=0$ and there exists $b_t(u)\in(\frac{1}{2},1]$ such that $p_t(u, dr)=\delta_{b_t(u)}(dr)$ (because $\mathcal{H}$ is one-to-one on $(\frac{1}{2},1]$). \end{itemize}
In the second case, since under $\bar{\mathcal{P}}^*$, for any smooth function $\xi$ on $\bb T$, we have $\langle m_t, \xi\rangle=\ang{ \pi_t, \xi\cdot r}$ (recall \eqref{eq:estimipp} and pass to the limit), one finally obtains that almost everywhere in 
$ [0,T]\times \bb T$, we must further have $b_t(u)=\rho_t(u)$, which proves Proposition \ref{prop:YM}.
\end{proof}

We now conclude with the proof of the hydrodynamic result, namely \eqref{eq:limith} stated in  Theorem \ref{thm:hydro}. 

\begin{proof}[Proof of Theorem \ref{thm:hydro}]
Define the discrete laplacian $\Delta^N$, acting on functions $\varphi:\bb{T}\to \R$, as
\begin{equation}
\label{eq:DefDeltaN}
\Delta^N\varphi(u)=N^2\big(\varphi(u+\tfrac 1N)+\varphi(u-\tfrac 1N\big)-2\varphi(u)\big).
\end{equation} 
Recall from \eqref{eq:DefmN} the definition of the empirical measure $m^N$. We first write by Dynkin's formula, for any $\varphi\in C^{1,2}([0,T] \times \bb T)$
\begin{align*}\langle m^N_T,\varphi_T\rangle-\langle m^N_0,\varphi_0\rangle & - \int_0^T \langle m_t^N,\partial_t \varphi_t \rangle dt \\ & - \int_0^T\frac{1}{N}\sum_{x\in\bb T_N}\Delta^N \varphi_t\big(\tfrac x N\big) \tau_x h(\eta(t))\; dt=\mathcal{M}_T^{\varphi,N},\end{align*}
where $\mathcal{M}_T^{\varphi,N}$ is a martingale whose quadratic variation can be written explicitly (see e.g.~\cite[Appendix 1.5]{KL}) as 
\begin{align*}
\big[\mathcal{M}^{\varphi,N}\big]_t&=N^2\int_0^t \Big(\gene_N\pa{\big\langle m^N_s,\varphi_s\big\rangle^2}-2\big\langle m^N_s,\varphi_s\big\rangle\gene_N\big\langle m^N_s,\varphi_s\big\rangle \Big) ds\\
&=\frac{1}{2}\int_0^t\sum_{\substack{x\in \bb T_N \\ |y-x|=1}}\Big(\varphi_s\big(\tfrac x N\big)-\varphi_s\big(\tfrac y N\big)\Big)^2 c_{x,y}(\eta(s))ds.
\end{align*}
Since the function $\varphi$ is smooth, $[\mathcal{M}^{\varphi,N}]_t\leq tC(\varphi)/N$ and vanishes as $N\to\infty$. 
Using this, the local ergodicity proved in Proposition \ref{prop:localerg}, and the fact that replacing $\tau_xh$ by $\frac{1}{2\ell+1}\sum_{y\in B_\ell}\tau_{x+y}h$ in the integral leads to a term which is bounded by  $C\ell/N$ (where $C>0$ is a constant), we obtain
\begin{multline}
\label{eq:Dynkin}
\lim_{\ell\to\infty} \underset{N\to\infty}{\limsup\;}\E_{\mu^N}\Bigg[\bigg|\langle m^N_T,\varphi_T\rangle-\langle m^N_0,\varphi_0\rangle  - \int_0^T \langle m_t^N,\partial_t \varphi_t \rangle dt \\* -\int_0^T\frac{1}{N}\sum_{x\in\bb T_N}\Delta^N \varphi_t\big(\tfrac x N \big) \mathcal{H}(\rho_{t}^\ell(x)) dt\bigg|\Bigg]=0.
\end{multline}
Theorem \ref{thm:hydro} is now a consequence of Proposition \ref{prop:YM} above. Indeed, the expectation in the left hand side of \eqref{eq:Dynkin} rewrites as
\[
\bar{\mathcal{P}}_{N,\ell}\Bigg(\bigg|\langle m^N_T,\varphi_T\rangle-\langle m^N_0,\varphi_0\rangle - \int_0^T \langle m_t^N,\partial_t \varphi_t \rangle dt  -\int_0^T\ang{ \pi^{N,\ell}_t \,,\,\Delta^N \varphi_t \cdot \mathcal{H}} dt\bigg|\Bigg),
\]
where the short notation $\Delta^N\varphi \cdot  \mathcal{H} $ stands for $(u, r) \mapsto \mathcal{H}(r) \Delta^N \varphi(u)$ (recall Definition \ref{def:Youngmeasure}). In particular, as $N\to \infty$ then $\ell\to\infty$, we obtain according to Proposition \ref{prop:YM} that for any limit point $\bar{\mathcal{P}}^*$ of $ \bar{\mathcal{P}}_{N,\ell}$,
\begin{equation*}
\bar{\mathcal{P}}^*\pa{\bigg|\big\langle \rho_T,\varphi_T\big\rangle-\big\langle \rho_0,\varphi_0\big\rangle -\int_0^T \big\langle \rho_t, \partial_t\varphi_t\big\rangle dt-\int_0^T\big \langle  \mathcal{H}(\rho_t), \partial_u^2 \varphi\big \rangle dt\bigg|}=0,
\end{equation*}
which yields as wanted that $\bar{\mathcal{P}}^*$ is concentrated on 
trajectories $m_t(du)=\rho_t(u)du$  such that $\rho$ is a weak solution to \eqref{eq:PDEstrong}, in the sense of Definition \ref{Def:PDEweak}. 
\end{proof}

The remainder of the section is dedicated to proving Lemma \ref{lem:eqpi}. For that purpose, we need to state and demonstrate two important results: first we investigate the grand canonical measures of the process and we prove an \emph{ergodic decomposition} of any infinite volume stationary measure \emph{\`a la} De Finetti (Section \ref{sec:invariant-measures}, Lemma \ref{lem:definetti}); and second, we obtain a local law of large numbers analogous to the well-known \emph{one-block estimate} (Section \ref{sec:ergod}, Proposition \ref{prop:localerg}). The end of the proof is given in the last Section \ref{sec:prooflemma}.

\subsection{Canonical and grand canonical measures} \label{sec:invariant-measures}
Let us define the infinite volume generator associated with our dynamics (recall \eqref{eq:DefLN}), which acts on local functions $f:\{0,1\}^\Z\to\R$, as
\begin{equation}
\label{eq:DefLNinf}
\mathcal{L}_{\infty}f(\eta):=\sum_{x\in\Z}c_{x,x+1}(\eta)\big(f(\eta^{x,x+1})-f(\eta)\big).
\end{equation}
In this section, we investigate the measures on $\{0,1\}^\Z$ which are stationary for $\mathcal{L}_{\infty}$. 
One of the main ingredients needed to apply the same arguments as in Funaki's proof \cite{Funaki} is to prove that any  stationary measure for the generator $\mc L_\infty$, 
once restricted to the \emph{active} phase $\{\rho > \rho_\star=\frac12\}$, admits a decomposition along spatially ergodic measures. 

Let us  first introduce the \emph{grand canonical measures} $\pi_\rho$ for the facilitated exclusion process,  which have been studied  in detail in \cite{BESS}:

\begin{definition}[Grand canonical measures] \label{def:gcm}~
\begin{itemize} 
\item For any $\rho\in (\frac12,1)$, and any local configuration $\sigma=(\sigma_0, \dots, \sigma_\ell)$ on $\Lambda_\ell$, we define 
\begin{equation}
\label{eq:defGCM}
\pi_\rho\pa{\eta_{\mid \Lambda_\ell}=\sigma}={\bf 1}_{\{\sigma\in \mathscr{E}_{\Lambda_\ell}\}}(1-\rho)\pa{\tfrac{1-\rho}{\rho}}^{\ell-p}\pa{\tfrac{2\rho-1}{\rho}}^{2p-\ell-\sigma_0-\sigma_\ell},
\end{equation}
where $p=p(\sigma):=\sum_{y\in \Lambda_\ell}\sigma_y$ is the number of particles in $\sigma$, and  $\mathscr{E}_{\Lambda_\ell}$ 
was defined in \eqref{eq:ergset} as the set of local ergodic configurations.
\item For any $\rho \in [0,\frac12]$, we define \begin{equation}
\pi_\rho=\frac12\delta_{\circ\bullet}+\frac12\delta_{\bullet\circ}\; , \label{eq:defGCM2}
\end{equation}
where $\circ\bullet$ (resp. $\bullet\circ$) is the configuration in which there is a particle at $x$ iff $x$ is odd (resp. even), and $\delta_\eta$ is the Dirac measure concentrated on the configuration $\eta$. 
\item For $\rho =1$, let $\pi_1=\delta_{\bf 1}$, where $\bf 1$ denotes the configuration identically equal to $1$.
\end{itemize}
\end{definition}
We know from \cite[Section 6]{BESS} that the measures $\pi_\rho$ are invariant for the generator $\mc L_\infty$. Here we prove important additional properties of theses measures. 
The main result of this section is the following:

\begin{lemma}[Ergodic decomposition of stationary measures]
\label{lem:definetti}
Let $\bar \mu$ be a translation invariant, infinite volume, measure on  $\{0,1\}^\Z$, which is stationary for $\mathcal{L}_{\infty}$, i.e.~such that for any local function $f$, 
$\bar \mu(\mathcal{L}_{\infty}f)=0$. 

Then, there exist $\lambda\in [0,1]$, a probability measure $\mu_{\mc F}$ with support included 
in $\mc F_\Z$ (the set of frozen configurations, cf \eqref{eq:froset}), and a probability measure $\varpi(d\rho)$ on $[\frac12,1]$, such that, 
\begin{equation}\label{eq:decomperg}\bar \mu(\cdot)=\lambda\mu_{\mc F}(\cdot)+(1-\lambda)\int_{[\frac12,1]}\varpi(d\rho)\pi_\rho(\cdot).\end{equation}
\end{lemma}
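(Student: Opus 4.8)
The plan is to reduce the facilitated exclusion process to a zero-range process via the mapping alluded to in the introduction (equations \eqref{eq:DefPi} and \eqref{eq:ZRgen}), transport the stationarity hypothesis through this mapping, and then invoke the known ergodic decomposition results for zero-range processes (the reference \cite{Andjel}) to obtain a De Finetti–type representation. The facilitated exclusion process is notorious for \emph{not} having product invariant measures, but the zero-range image does, which is precisely why the detour is worthwhile.

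First I would set up the correspondence. A configuration $\eta \in \{0,1\}^{\bb Z}$ lying in the active/ergodic phase (so that empty sites are isolated) can be encoded by recording the gaps between consecutive particles, or equivalently by assigning to each particle the number of empty sites immediately to one side of it; this produces a zero-range configuration $\omega \in \bb N^{\bb Z}$. The exclusion dynamics \eqref{eq:DefLN}–\eqref{eq:rate}, restricted to configurations where particles can actually move, translates into a zero-range dynamics with jump rate depending on the local occupation, and the grand canonical measures $\pi_\rho$ of Definition \ref{def:gcm} become product measures in the $\omega$-variables. The key point is that stationarity of $\bar\mu$ under $\mc L_\infty$ transfers to stationarity of the pushforward measure $\bar\nu$ (the law of $\omega$ under $\bar\mu$, conditioned on or restricted to the ergodic part) under the zero-range generator.

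Next I would decompose $\bar\mu$ according to the phase. Since pairs of adjacent empty sites cannot be created \emph{or destroyed} by the dynamics, the frozen set $\mc F_{\bb Z}$ and the ergodic set $\mathscr{E}_{\bb Z}$ are dynamically invariant, so any stationary measure splits as $\bar\mu = \lambda \mu_{\mc F} + (1-\lambda)\bar\mu_{\mathscr{E}}$, where $\lambda = \bar\mu(\mathscr{F}_{\bb Z})$, the measure $\mu_{\mc F}$ is supported on frozen configurations, and $\bar\mu_{\mathscr{E}}$ is supported on (and stationary within) the ergodic phase. The frozen part requires no further analysis. For the ergodic part I push forward to the zero-range picture to obtain a translation-invariant stationary measure $\bar\nu$ for the zero-range process, then apply the characterization of \cite{Andjel}, which states that the extremal translation-invariant stationary measures for such a zero-range process are exactly the spatially ergodic product measures indexed by density (here parametrized by $\rho \in [\tfrac12,1]$); a general stationary measure is a mixture $\int \varpi(d\rho)\,\nu_\rho$. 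Pulling this decomposition back through the mapping turns the $\nu_\rho$ into the $\pi_\rho$ and yields \eqref{eq:decomperg}.

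\textbf{The main obstacle} I anticipate is the bookkeeping at the level of the mapping, rather than any deep probabilistic input. One must verify carefully that the zero-range image of the facilitated dynamics indeed falls within the class covered by \cite{Andjel} (in particular that its jump rates satisfy whatever boundedness/attractivity or sublinear growth hypotheses \cite{Andjel} requires), and — more delicately — that the map $\eta \mapsto \omega$ from the ergodic phase to $\bb N^{\bb Z}$ is a \emph{bijection commuting with translations up to the labeling of particles}, so that translation invariance and ergodicity are faithfully transported in both directions. Because the correspondence is only defined on the ergodic phase and involves a choice of base point for indexing particles, one has to check that this choice does not spuriously break translation invariance, and that the density $\rho \in [\tfrac12,1]$ of $\pi_\rho$ corresponds correctly to the mean occupation of $\nu_\rho$. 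Once these identifications are pinned down, the decomposition \eqref{eq:decomperg} follows immediately from the zero-range ergodic decomposition.
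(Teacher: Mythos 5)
There is a genuine gap, and it sits exactly where you pass from ``$\mathscr{E}_\Z$ and $\mathscr{F}_\Z$ are dynamically invariant'' to ``any stationary measure splits as $\bar\mu=\lambda\mu_{\mc F}+(1-\lambda)\bar\mu_{\mathscr{E}}$.'' First, your stated justification is factually off: pairs of adjacent empty sites cannot be \emph{created}, but they certainly can be \emph{destroyed} (e.g.\ $\bullet\bullet\circ\circ\mapsto\bullet\circ\bullet\circ$), which is precisely why $\mathscr{E}_\Z$ is absorbing while its complement is not invariant. More importantly, even granting that $\mathscr{E}_\Z$ is absorbing and $\mathscr{F}_\Z$ consists of blocked configurations, this does \emph{not} imply that a translation invariant stationary measure gives zero mass to the transient set $\{0,1\}^\Z\setminus(\mathscr{E}_\Z\cup\mathscr{F}_\Z)$: in infinite volume there is no finite-time absorption argument available, and a configuration can contain both a pattern $\circ\circ$ and a pattern $\bullet\bullet$ forever. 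Ruling this out is the content of \eqref{eq:muEF}, and the paper proves it by a genuine computation: an induction on $k$ showing $\bar\mu(\circ\circ[\bullet\circ]^k\bullet\bullet)=\bar\mu(\bullet\bullet[\circ\bullet]^k\circ\circ)=0$, obtained by writing out $\bar\mu\big(\mathcal{L}_\infty{\bf 1}_{\{\circ\circ[\bullet\circ]^{k-1}\bullet\bullet\}}\big)=0$ and checking that all terms except the one of interest vanish by the induction hypothesis. Your proposal contains no substitute for this step, and without it the decomposition into a frozen part plus an ergodic part is unjustified.

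The second half of your plan (push forward to the zero-range picture, apply Andjel, pull back) does match the paper's route, and you correctly flag the delicate points there — translation invariance of the pushforward despite the conditioning on $\eta_0=0$, and fitting the constant-rate, $\omega_x\geq 2$-constrained zero-range dynamics into Andjel's framework (the paper does the latter by coupling with the standard constant-rate process on $\N^\Z$ after subtracting one particle per site). But note one further omission even in that half: before the map $\Pi$ can be used you must show that $\bar\mu$-almost every ergodic configuration has infinitely many empty sites on both sides of the origin ($\bar\mu(\Sigma_\infty^c)=0$); the paper derives this from translation invariance by exhibiting infinitely many disjoint translates of any event of positive probability. So the architecture of your reduction is right, but the proof as written is missing the two measure-theoretic inputs — support on $\mathscr{E}_\Z\cup\mathscr{F}_\Z$, and support on $\Sigma_\infty$ — that make the reduction legitimate, and the first of these is the real mathematical core of the lemma.
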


\begin{proof}[Proof of Lemma \ref{lem:definetti}] We first discard the degenerate case, where the translation invariant measure $\bar \mu$ satisfies $\bar\mu(\eta_0=1)=1$: in this case, by translation invariance, $\bar\mu=\pi_1=\delta_{\bf1}$. Then the result is trivially true.

Fix now a translation invariant measure $\bar \mu$ on  $\{0,1\}^\Z$ which is  stationary w.r.t.~the generator $\mathcal{L}_{\infty}$, 
and such that $\bar\mu(\eta_0=1)<1$ (i.e.~$\bar\mu(\eta_0=0)>0$). 
Recall from \eqref{eq:ergset} and \eqref{eq:froset} the definition of the sets of infinite ergodic and frozen configurations $\mathscr{E}_\Z$ and $\mathscr{F}_\Z$. 
We first claim that, since $\bar\mu$ is stationary,  we must have 
\begin{equation}
\label{eq:muEF}
\bar\mu\pa{\{0,1\}^\Z\setminus (\mathscr{E}_\Z\cup \mathscr{F}_\Z)}=0,
\end{equation}
i.e.~$\bar\mu$ charges configurations which are either completely ergodic, or completely frozen.
To expose the argument as clearly as possible, let us indicate the occupied sites by $\bullet$, and the empty sites by $\circ$, and any local configuration $\eta$ by a finite sequence of $\bullet$ and $\circ$. Since $\bar\mu$ is translation invariant, there will be no need to specify the support of the configurations in the following argument.  We further use the notation
\[[ \bullet \circ]^k:=\underbrace{\bullet \circ \cdots \bullet \circ}_{2k \text{ sites }}\quad \mbox{ and } [ \circ  \bullet ]^k:=\underbrace{\circ \bullet  \cdots  \circ  \bullet }_{2k \text{ sites }}.\]
We are going to show that for any $k\geq0$,
\begin{equation}
\label{eq:statmix}
\bar\mu(\circ\circ[\bullet \circ]^k \bullet \bullet )=\bar\mu(\bullet \bullet[\circ \bullet ]^k \circ\circ )=0, 
\end{equation}
the box where the configuration is observed being arbitrary, but fixed. Since any configuration which is not in $\mathscr{E}_\Z$ nor in  
$\mathscr{F}_\Z$ must contain either $\circ\circ[\bullet \circ]^k \bullet \bullet\;$ or $\;\bullet \bullet[ \circ \bullet]^k \circ\circ$ for some $k$, 
this will prove \eqref{eq:muEF}. For $k=0$, we write by definition and using the translation invariance of $\bar\mu$
\[\bar\mu\big(\mathcal{L}_\infty{\bf 1}_{\{\circ\circ\}}\big)=-\bar\mu (\circ\circ\bullet\; \bullet )-\bar\mu(\bullet\bullet\circ\;\circ)=0,\]
since $\bar\mu$ is stationary. Therefore both probabilities on the right hand side, which are non-negative, must be equal to $0$. This proves \eqref{eq:statmix} for $k=0$. 
Assume now that \eqref{eq:statmix} holds for any $\ell< k$, then write (more explanations on the following identity are given right below):
\begin{align}0 & =\bar\mu\Big(\mathcal{L}_\infty{\bf 1}_{\{ \circ\circ \;[\bullet \circ]^{k-1} \bullet \bullet  \}}\Big) \vphantom{\int} \notag \\
& =-\bar\mu \big(\bullet \bullet \circ\circ\; [\bullet \circ]^{k-1} \bullet \bullet\big)-\bar\mu\big(\circ\circ\; [\bullet \circ]^{k-1} \bullet \bullet\big) \vphantom{\int} \label{eq:firstt}\\
& \quad -\bar\mu\big( \circ\circ\; [\bullet \circ]^{k-1} \bullet \bullet \circ\big)+\bar\mu\big(\circ\circ\; [\bullet \circ]^k \bullet \bullet\big) \vphantom{\int} \label{eq:secondt}\\
& \quad +\sum_{\ell=0}^{k-3}\bar\mu( \circ\circ\; [\bullet \circ]^{\ell} \bullet \bullet \circ\circ\; [\bullet \circ]^{k-\ell-3} \bullet \bullet) \label{eq:thirdt}\\
&  \quad +\sum_{\ell=0}^{k-2}\bar\mu( \circ\circ\; [\bullet \circ]^{\ell}\circ \bullet\; [\bullet \circ]^{k-\ell-2} \bullet \bullet).\vphantom{\int} \label{eq:fourtht}
\end{align}
Let us comment briefly on the identity above: the only terms that give a non-zero contribution to $\mathcal{L}_\infty{\bf 1}_{\{\circ \circ \; [\bullet \circ]^{k-1} \bullet \bullet \}} $ are: \begin{enumerate} \item the jumps that can happen in the configuration $\circ \circ \; [\bullet \circ]^{k-1} \bullet \bullet$ (giving contributions with the minus sign). There are three possibilities: first, if there are two extra particles to the left, then the first pair of empty sites may be broken by a particle 
coming from the left, 
\[ {\color{red}\bullet \; \bullet} \circ \circ \; [\bullet \circ]^{k-1} \bullet \bullet \qquad \mapsto \qquad {\color{red}\bullet} \circ {\color{red} \bullet}  \circ \; [\bullet \circ]^{k-1} \bullet \bullet, \]
and this gives the first term in \eqref{eq:firstt}. Similarly, the transitions corresponding to the second term in \eqref{eq:firstt} and first term in \eqref{eq:secondt} are given respectively by
\[  \circ \circ \; [\bullet \circ]^{k-2} \bullet   \circ \; {\color{red}\bullet} \; \bullet \qquad \mapsto \qquad \circ  \circ \; [\bullet \circ]^{k-2} \bullet {\color{red}\bullet} \circ \bullet \]
and
\[ 
\circ \circ \; [\bullet \circ]^{k-1} \bullet \bullet \; {\color{red}\circ} \qquad \mapsto \qquad \circ \circ \; [\bullet \circ]^{k-1} \bullet {\color{red} \circ \; \bullet}.
\]
\item Then, there are the jumps that, starting from another configuration, lead to $\circ \circ \; [\bullet \circ]^{k-1} \bullet \bullet$ (giving the three contributions with the plus sign). The corresponding transitions are depicted below:
\[ \circ \circ \; [\bullet \circ]^{k-1} \bullet {\color{red}\circ\; \bullet}\; \bullet \qquad \mapsto \qquad  \circ \circ \; [\bullet \circ]^{k-1} \bullet {\color{red}\bullet\; \circ}\; \bullet,  \]

\[ 
\circ \circ [\bullet \circ]^\ell \bullet {\color{red}\bullet\;  \circ} \circ [\bullet \circ]^{k-\ell-3} \bullet \bullet  \qquad \mapsto \qquad \circ \circ [\bullet \circ]^\ell \bullet {\color{red}\circ\;  \bullet} \circ [\bullet \circ]^{k-\ell-3} \bullet \bullet, 
\]

\[ 
\circ\circ\; [\bullet \circ]^{\ell}\; {\color{red}\circ\; \bullet}\; [\bullet \circ]^{k-\ell-2} \bullet \bullet
\qquad \mapsto \qquad \circ\circ\; [\bullet \circ]^{\ell}\; {\color{red}\bullet\; \circ}\; [\bullet \circ]^{k-\ell-2} \bullet \bullet.
\]
\end{enumerate}

Note that all terms in \eqref{eq:thirdt} contain $\bullet\bullet\circ\circ$, and all terms in \eqref{eq:fourtht} contain $\circ \circ \bullet \bullet$ and therefore vanish. 
Since we assumed that \eqref{eq:statmix} holds for any $\ell\leq k-1$, all terms in the right hand side \eqref{eq:firstt}--\eqref{eq:secondt} vanish,
except $\bar\mu(\circ\circ[\bullet \circ]^k \bullet \bullet)$. Therefore the latter must vanish as well. 
An analogous computation for $\bar\mu\big(\mathcal{L}_\infty{\bf 1}_{\{ \bullet \bullet[ \circ \bullet]^{k-1} \circ\circ  \}}\big)$ 
proves the second identity, so that \eqref{eq:statmix} holds for any $k$.

\medskip

Now, let $\lambda=\bar\mu(\mathscr{F}_\Z) \in [0,1]$ be the total mass of frozen configurations. Note that any translation invariant measure with support included in $\mathscr{F}_\Z$ is necessarily stationary for $\mathcal{L}_{\infty}$. 
In particular, in order to prove Lemma \ref{lem:definetti}, we only need to treat the decomposition of $\bar \mu$ 
restricted to the ergodic component. Without loss of generality, we can therefore assume that $\lambda=0$, i.e. $\bar\mu(\mathscr{E}_\Z)=1$. Let us also put aside the case where $\bar \mu$ gives positive weight to ${\bf 1}$ and assume $\bar\mu({\bf 1})=0$.
Let us define the set of configurations with infinitely many zeros both right and left of the origin:
\[
\Sigma_\infty:=\Big\{\eta\in\{0,1\}^{\Z}\; ; \; \sum_{x\geq 0}(1-\eta_x)=\sum_{x\leq 0}(1-\eta_x)=\infty\Big\}.
\]
We claim that, since we assumed $\bar\mu(\eta_0=0)>0$ and $\bar\mu({\bf 1})=0$, we must have $\bar\mu(\Sigma_\infty^c)=0$. To prove this claim, fix a semi infinite configuration $\eta^+$ on $\N$ with a finite number of empty sites (see Figure \ref{fig:eta+}), denote $c=\bar\mu(\eta_{|\N}=\eta^+)$. If  $\eta^+\neq {\bf 1}$, denote $z$ its rightmost empty site, and define   the set 
\[E_k:=\Big\{ \eta\in\{0,1\}^{\Z}\; ; \; \eta_{|\{k(z+1), \dots\}}=\tau_{-k(z+1)}\eta^+\Big\}.\]
\begin{figure}[H]
\centering
\begin{tikzpicture}
\node at (1,0.5) {\color{white}x};
\draw (-1,0) -- (9,0);
\foreach \i in {-1,...,8}
{
\draw (\i+0.5,-0.1) -- (\i+0.5,0.1);
}
\node at (-0.5,-0.3) {$0$};
\node at (4.5,-0.3) {$z$};
\node at (9.2,0.4) {\emph{etc.}};
\node[circle,fill=black,inner sep=1mm] at (0.5,0.2) {};
\node[circle,fill=black,inner sep=1mm] at (3.5,0.2) {};
\node[circle,fill=black,inner sep=1mm] at (5.5,0.2) {};
\node[circle,fill=black,inner sep=1mm] at (1.5,0.2) {};
\node[circle,fill=black,inner sep=1mm] at (6.5,0.2) {};
\node[circle,fill=black,inner sep=1mm] at (7.5,0.2) {};
\node[circle,fill=black,inner sep=1mm] at (8.5,0.2) {};
\end{tikzpicture}
\caption{An example of configuration $\eta_+$ with support $\N$.}
\label{fig:eta+}
\end{figure}
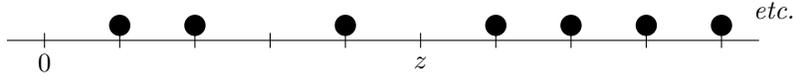

By translation invariance of $\bar\mu$, we have $\bar\mu(E_k)=c$ for any  $k\in \N$, and the sets $E_k$'s are disjoint by construction, because 
\[E_k\subset\Big\{  \eta\in\{0,1\}^{\Z}\; ; \; \sum_{x\geq k(z+1)}(1-\eta_x)>0\quad \mbox{ and }\quad \sum_{x\geq (k+1)(z+1)}(1-\eta_x)=0\Big\},\]
therefore in particular, we must have $c=0$.  Since there are countably many configurations with a finite number of empty sites to the right of the origin, and since we just proved that their probabilities vanish, making the same statement for configurations to the left of the origin yields as wanted $\bar\mu(\Sigma_\infty^c)=0$.

\medskip

To prove the ergodic decomposition \eqref{eq:decomperg}, we use a classical mapping between the facilitated exclusion process and a zero-range process, 
introduced in \cite{BM2009} and already exploited in \cite{BESS}. 
For simplicity, we define this mapping on the set 
\[\Sigma^0_\infty:=\{\eta\in \Sigma_\infty \; ; \; \eta_0=0\}\]
of configurations with an empty site at the origin. Then, given $\eta\in \Sigma_\infty^0$, and for any integer $k>0$ (resp.~$-k<0$) 
we denote $x_k(\eta)$ (resp.~$x_{-k}(\eta)$) the position of the $k$-th empty site to the right (resp.~to the left) of the origin, and let $x_0=0$. 
We then define, for any $k\in \Z$ and $\eta\in \Sigma^0_\infty$ 
\[\omega^\eta_k:=x_{k+1}(\eta)-x_k(\eta)-1.\]
In other words, $\omega^\eta\in \N^\Z$ is the zero range configuration such that the number of particles on site $k>0$ (resp.~$-k<0$) is the number of particles between the 
$k$--th and $(k\!+\!1)$--th empty site to the right (resp.~to the left) of the origin in $\eta$ (see Figure \ref{fig}).

\begin{figure}[h]
\centering
\begin{tikzpicture}
\draw (-1,0) -- (10,0);
\foreach \i in {-1,...,9}
{
\draw (\i+0.5,-0.1) -- (\i+0.5,0.1);
}
\node[circle,fill=black,inner sep=1mm] at (0.5,0.2) {};
\node[circle,fill=black,inner sep=1mm] at (3.5,0.2) {};
\node[circle,fill=black,inner sep=1mm] at (5.5,0.2) {};
\node[circle,fill=black,inner sep=1mm] at (1.5,0.2) {};
\node[circle,fill=black,inner sep=1mm] at (6.5,0.2) {};
\node[circle,fill=black,inner sep=1mm] at (7.5,0.2) {};
\node at (4.5,-0.3) {$0$};
\node at (11,0.1) {$\eta$};

\draw (2,-2) -- (6,-2);
\foreach \i in {2,...,5}
{
\draw (\i+0.5,-2.1) -- (\i+0.5,-1.9);
}
\node at (4.5,-2.3) {$0$};
\node[circle,fill=black,inner sep=1mm] at (4.5,-1.8) {};
\node[circle,fill=black,inner sep=1mm] at (4.5,-1) {};
\node[circle,fill=black,inner sep=1mm] at (4.5,-1.4) {};
\node[circle,fill=black,inner sep=1mm] at (3.5,-1.8) {};
\node[circle,fill=black,inner sep=1mm] at (2.5,-1.8) {};
\node[circle,fill=black,inner sep=1mm] at (2.5,-1.4) {};
\node at (7,-1.9) {$\omega^\eta$};
\end{tikzpicture}
\caption{An exclusion configuration $\eta$ with an empty site at the origin and its corresponding zero-range configuration $\omega^\eta$.} \label{fig}
\end{figure}
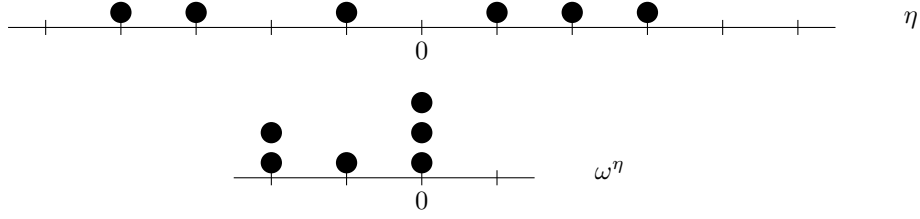

We first note that for any ergodic configuration $\eta\in\Sigma_\infty^0\cap \mathscr{E}_\Z$, we must have $\omega^\eta\in\N_*^\Z$. Let us denote by $\Pi$  the one-to-one mapping 
\begin{equation}
\label{eq:DefPi}
\begin{array}{cccc}
\Pi:&\Sigma^0_\infty \cap \mathscr{E}_{\Z}&\to&\N_*^\Z \\
&\eta&\mapsto&\omega^\eta \; . 
\end{array}
\end{equation}
 Recalling that we  assume $\bar\mu(\mathscr{E}_\Z)=1$ and $\bar\mu(\eta_0=0)>0$, we now define a measure $\bar \nu$ on the set  $\N_*^\Z$ of infinite zero-range configurations, 
 \begin{equation}
\label{eq:nubar}
\bar \nu( F):=\bar\mu(\omega^\eta\in F \mid \eta_0=0), \qquad F \subset \N_*^\Z\quad\text{measurable}.
\end{equation}
In particular, for any $E \subset \mathscr{E}_\Z$ measurable, 
\begin{equation}
\bar \mu( \eta \in E \mid \eta_0=0 ) = \bar \nu\big(\Pi(E\cap \Sigma^0_\infty)\big).
\label{eq:idmeas}
\end{equation}
Define the infinite volume \emph{zero-range generator} $\mathcal{L}^{\mathrm{ZR}}_{\infty}$ which acts on local functions  $f:\N^\Z \to \R$
\begin{equation}
\label{eq:ZRgen}
\mathcal{L}^{\mathrm{ZR}}_{\infty}f(\omega):=\sum_{x\in\Z}\sum_{\delta=\pm1}{\bf 1}_{\{\omega_x\geq 2\}}\big(f(\omega^{x,x+\delta})-f(\omega)\big),
\end{equation}
with $\omega^{x,x+\delta}$ representing the zero-range configuration where one particle in $\omega$ was moved from $x$ to $x+\delta$, i.e. 
 \[
 \omega^{x,x+\delta}_y=
 \begin{cases}
 \omega_x-1 &\mbox{ if }y=x\\
 \omega_{x+\delta}+1 &\mbox{ if }y=x+\delta\\ 
 \omega_y &\mbox{ else}
 \end{cases}.
 \]
 One easily checks that for any $\alpha \geq 1$ the geometric product homogeneous measures $\nu_\alpha$ with marginals
 \begin{equation}
\label{eq:defnualpha}
 \nu_\alpha(\omega_0=p)={\bf 1}_{\{p\in \N, p\geq 1\}}\frac{1}{\alpha}\Big(1-\frac{1}{\alpha}\Big)^{p-1}  
 \end{equation}
are reversible for $\mathcal{L}^{\mathrm{ZR}}_{\infty}$, and that $\alpha=\E_{\nu_\alpha}(\omega_0)$ then represents the average particle density per site. 
We claim the following.

\begin{lemma}
\label{lem:lemZRstat}
The measure $\bar \nu$ defined by \eqref{eq:nubar} on $\N_*^{\Z}$ is translation invariant, and stationary w.r.t.~the zero-range generator $\mathcal{L}^{\mathrm{ZR}}_{\infty}$.
In particular from \cite{Andjel}, there exists  a probability measure $\varpi_{\mathrm{ZR}}$ on $[1,+\infty)$, such that 
\begin{equation}
\label{eq:definettiZR}
\bar \nu(\cdot)=\int_{[1,+\infty)}\varpi_{\mathrm{ZR}}(d\alpha)\nu_\alpha(\cdot)\;.
\end{equation}
\end{lemma}

Before proving this result, we show that Lemma \ref{lem:definetti} follows. 
For any event $E\subset\Sigma^0_\infty \cap \mathscr{E}_\Z$, we can now write according to Lemma \ref{lem:lemZRstat} and using \eqref{eq:idmeas},
\[\bar\mu(E \mid \eta_0=0)=\int_{[1,+\infty)}\varpi_{\mathrm{ZR}}(d\alpha)\nu_\alpha\big(\Pi(E)\big).\]
for some measure $\varpi_{\mathrm{ZR}}(d\alpha)$ on $[1,+\infty)$.  Define $\mathcal{G}(\alpha)=\alpha/(1+\alpha)$, which is an increasing bijection from $[1,+\infty)$ to $[\frac12,1)$. 
Given the explicit expressions \eqref{eq:defnualpha} and \eqref{eq:defGCM} for $\nu_\alpha$ and $\pi_\rho$, one easily checks that
\[\pi_{\mathcal{G}(\alpha)}(E\mid\eta_0=0)=\nu_\alpha\big(\Pi(E)\big).\] We now define the measure $\widetilde{\varpi}$ on $[\frac12,1)$ as the pushforward of $\varpi_{\mathrm{ZR}}$ by $\mathcal{G}$ 
\[\widetilde{\varpi}=\varphi_{\mathrm{ZR}}\circ \mathcal{G}^{-1},\]
which yields after a change of variables
\[\bar\mu(E \mid \eta_0=0)=\int_{[\frac12,1)}\widetilde{\varpi}(d\rho)\pi_\rho(E\mid \eta_0=0).\]
Finally, let  
\[\varpi(d\rho)=\frac{\bar \mu(\eta_0=0)}{\pi_\rho(\eta_0=0)}\widetilde{\varpi} (d\rho),\] 
and we obtain
\[\bar\mu(E \cap \{ \eta_0=0\})=\int_{[\frac12,1)}\varpi(d\rho)\pi_\rho(E\cap\{ \eta_0=0\}).\]
Since by assumption $\bar\mu(\Sigma_\infty)=1$, for any event $E \subset \mathcal{E}_\Z$ we can write $\bar\mu(E)=\sum_{k=0}^\infty\bar\mu(E\cap\{\eta_0=\ldots=\eta_{k-1}=1,\eta_k=0\})$ and similarly with $\pi_\rho$ for any $\rho\in[\frac12,1)$. Using the translation invariance of both $\bar\mu$ and $\pi_\rho$ in these identities, we obtain
$\bar\mu(E)=\int_{[\frac12,1)}\varpi(d\rho)\pi_\rho(E)$ as wanted, which concludes the proof of Lemma  \ref{lem:definetti}.
\end{proof}

We now prove Lemma \ref{lem:lemZRstat}.
\begin{proof}[Proof of Lemma \ref{lem:lemZRstat}]
It was proved by Andjel in \cite[Theorem 1.9]{Andjel} that any translation invariant, stationary measure for the zero-range process with constant jump rate can be decomposed as 
\begin{equation}\label{e:andjel}
\int_{[1,+\infty)}\varpi(d\alpha)\tilde\nu_\alpha(\cdot), 
\end{equation}where $\tilde\nu_\alpha$ is the product measure on $\N^\Z$ with marginals $\tilde\nu_\alpha(\omega_0=p)=\frac{1}{\alpha}(1-\frac{1}{\alpha})^p$. We can couple this zero-range process (which lives on $\N^\Z$) with the process generated by  $\mathcal{L}^{\mathrm{ZR}}_{\infty}$ and restricted to configurations in $\N_*^\Z$ by simply adding a particle at every site. The decomposition \eqref{e:andjel} then yields \eqref{eq:definettiZR} for any translation invariant measure on $\N_*^\Z$ which is stationary w.r.t.~the zero-range generator $\mathcal{L}^{\mathrm{ZR}}_{\infty}$. We therefore only need to prove the first two claims, namely the translation invariance and stationary properties.

\smallskip

\emph{(i) Translation invariance.}
Recall that we denote $\Lambda_\ell=\{0,\ldots,\ell\}$. To prove that $\bar\nu$ is translation invariant, fix $\ell\geq 0$ and consider a local zero-range configuration $\sigma^\ell=(\sigma_0,\dots, \sigma_\ell)\in \N_*^{\ell+1}$. We are going to prove that for any $x\in \Z$,
\[\bar\nu(\omega_{\mid{x+\Lambda_\ell}}=\sigma^\ell)=\bar\nu(\omega_{\mid{\Lambda_\ell}}=\sigma^\ell),
\]
where we shortened $x+E=\{x+y, y\in E\}$. To prove it, first note that by definition
\[\bar\nu(\omega_{\mid{x+\Lambda_\ell}}=\sigma^\ell)=\bar \mu (\eta_0=0)^{-1}\; \bar\mu\pa{\omega^\eta_{\mid x+\Lambda_\ell} =\sigma^\ell\;\mbox{ and }\;\eta_0=0}.\]
Assume that $x\neq 0$ (otherwise the statement is trivial), and first consider the case where $0\in x+\Lambda_\ell$, so that in particular we must have $x\leq -1$. 
In this case, denote $k:=k(\sigma^\ell)={\sum_{y=0}^{-x-1}(1+\sigma_y)}$, and note the following: for any configuration $\eta  \in \Sigma_\infty^0$ (with an empty site at the origin), 
\[ \text{if \quad } \omega^\eta_{\mid x+ \Lambda_{\ell}} =\sigma^\ell \text{\quad then \quad } \eta_{-k}=0,\] 
since  $-k$ 
is the position of the (exclusion) $x$--th empty site in $\eta$, corresponding to the (zero-range) site $x$ in $\omega^\eta$. 
In particular, we can write
\begin{align*}
\bar\mu\Big(\omega^\eta_{\mid x+\Lambda_{\ell}} =\sigma^\ell&\;\mbox{ and }\;\eta_0=0\Big)\\
&= \bar\mu\pa{\omega^\eta_{\mid x+ \Lambda_{\ell}} =\sigma^\ell\;\mbox{ and }\;\eta_{-k}=0\;\mbox{ and }\;\eta_0=0}\\
&= \bar\mu\pa{\omega^{\tau_{k}\eta}_{\mid x+\Lambda_{\ell}} =\sigma^\ell\;\mbox{ and }\;\eta_{0}=0\;\mbox{ and }\;\eta_{k}=0}\\
&=\bar\mu\bigg(\eta_0=0 \text{ and } \forall\; y\in \bigg\{1,\ldots,\sum_{z=0}^\ell(1+\sigma_z)\bigg\}, \\ & \qquad \qquad \qquad \qquad \qquad \qquad \eta_y=0 \; \Leftrightarrow \;   \exists\; i\in\Lambda_\ell,\ y=\sum_{z=0}^i(1+\sigma_z) \bigg)\\
&= \bar\mu\pa{\omega^\eta_{\mid\Lambda_\ell} =\sigma^\ell\;\mbox{ and }\;\eta_{0}=0}.
\end{align*}
To establish the second identity, we use the translation invariance of $\bar \mu$ and make the change of variable $\eta=\tau_k\eta'$. To establish the third  (resp.~fourth) identity, we simply lie down what the configuration $\omega^{\tau_{k}\eta}_{\mid \Lambda_{\ell+x}}$ (resp.~$\omega^\eta_{\mid\Lambda_\ell} $) means for $\eta$. This proves the translation invariance, assuming that $0\in x+ \Lambda_{\ell}$. 

We now consider the case $x>0$. The case $x<-\ell$ being strictly analogous, we will not detail it here. For two zero-range configurations $\sigma^k$ and $\sigma^\ell$ defined respectively on $\Lambda_k$ and  $\Lambda_\ell$, denote 
$ \sigma^k \sigma^\ell$ the \emph{concatenated} configuration  (defined on $\Lambda_{k+\ell+1}$)
\[\sigma^k \sigma^\ell=(\sigma^k_0,\dots,\sigma^k_k,\sigma^\ell_0, \dots, \sigma^\ell_\ell).\]
Fix $x>0$, and write  
\[\bar\mu\pa{\omega^\eta_{\mid x+\Lambda_{\ell}} =\sigma^\ell\;\mbox{ and }\;\eta_0=0}=\sum_{\sigma^{x-1}\in (\N_*)^x}\bar\mu\pa{\omega^\eta_{\mid \Lambda_{x+\ell}} = \sigma^{x-1}\sigma^\ell\;\mbox{ and }\;\eta_0=0}.\]
As in the first case, define $k=\sum_{y=0}^{x-1}(1+\sigma^{x-1}_y)$, and rewrite for any fixed $\sigma^{x-1} \in (\N_*)^x$:
\begin{align*}
\bar\mu\Big(\omega^\eta_{\mid \Lambda_{x+\ell}} = \sigma^{x-1}\sigma^\ell\;&\mbox{ and }\;\eta_0=0\Big)\\
= &\; \bar\mu\pa{\omega^\eta_{\mid \Lambda_{x+\ell}} = \sigma^{x-1}\sigma^\ell\;\mbox{ and }\;\eta_0=0 \;\mbox{ and }\;\eta_{k}=0}\\
= &\;\bar\mu\pa{\omega^{\tau_{-k}\eta}_{\mid \Lambda_{x+\ell}} = \sigma^{x-1}\sigma^\ell\;\mbox{ and }\;\eta_{-k}=0\;\mbox{ and }\;\eta_{0}=0}\\
= &\;\bar\mu\pa{\omega^\eta_{\mid -x+ \Lambda_{\ell}} = \sigma^{x-1}\sigma^\ell\;\mbox{ and }\;\eta_{0}=0},
\end{align*}
where the third identity is derived as above.
Summing over all  $\sigma^{x-1}\in (\N_*)^x$, one finally obtains as wanted
\[\bar\mu\Big(\omega^\eta_{\mid x+ \Lambda_{\ell}} =\sigma^\ell\;\mbox{ and }\;\eta_0=0\Big)=\bar\mu\pa{\omega^\eta_{\mid\Lambda_{\ell}} =\sigma^\ell\;\mbox{ and }\;\eta_0=0}.\]
 This proves that the measure $\bar\nu$ is translation invariant.

\smallskip

\emph{(ii) Stationarity.}
We now prove that $\bar\nu$ is stationary for the zero-range generator $\mathcal{L}_\infty^\mathrm{ZR}$ as well. To do so, it is sufficient to prove that for any $\ell\geq 0$ and any local configuration $\sigma^\ell \in\N_*^{\ell+1}$,
\begin{equation}
\label{eq:statZR}
\bar\nu\pa{\mathcal L^{\mathrm{ZR}}_\infty \mathbf{1}_{\{\omega_{\mid \Lambda_\ell}=\sigma^\ell\}}}=0.
\end{equation}
Proving this identity is a matter of elementary, though lengthy, computations; to facilitate reading, we will only write it for $\ell=1$ and let  the reader  check 
that this identity also holds for $\ell=0$ and $\ell>1$. In order not to burden the notations, we now omit the exponent $\ell=1$ in the configuration $\sigma$. 
Fix a pair of integers $(\sigma_0, \sigma_1) \in \N_*^2$, we can write
\begin{align}
\mathcal L^{\mathrm{ZR}}_\infty&\mathbf{1}_{\{\omega_0=\sigma_0, \;\omega_1=\sigma_1\}}=\mathbf{1}_{\{\omega_{-1}\geq 2\}}\pa{\mathbf{1}_{\{\omega_0=\sigma_0-1, \;\omega_1=\sigma_1\}}-\mathbf{1}_{\{\omega_0=\sigma_0, \;\omega_1=\sigma_1\}}} \notag \\
&+\mathbf{1}_{\{\omega_0\geq 2\}}\pa{\mathbf{1}_{\{\omega_0=\sigma_0+1, \;\omega_1=\sigma_1\}}+\mathbf{1}_{\{\omega_0=\sigma_0+1, \;\omega_1=\sigma_1-1\}}-2\mathbf{1}_{\{\omega_0=\sigma_0, \;\omega_1=\sigma_1\}}} \notag \\
&+\mathbf{1}_{\{\omega_1\geq 2\}}\pa{\mathbf{1}_{\{\omega_0=\sigma_0, \;\omega_1=\sigma_1+1\}}+\mathbf{1}_{\{\omega_0=\sigma_0-1, \;\omega_1=\sigma_1+1\}}-2\mathbf{1}_{\{\omega_0=\sigma_0, \;\omega_1=\sigma_1\}}} \notag \\
&+\mathbf{1}_{\{\omega_2\geq 2\}}\pa{\mathbf{1}_{\{\omega_0=\sigma_0, \;\omega_1=\sigma_1-1\}}-\mathbf{1}_{\{\omega_0=\sigma_0, \;\omega_1=\sigma_1\}}}. \label{eq:id1}
\end{align}
Analogously, for $\eta\in\mathcal{E}_\mathbb{Z}$,
\begin{align}
\mathcal L _\infty &\mathbf{1}_{\{\eta_0=0, \; \omega^\eta_0=\sigma_0, \;\omega^\eta_1=\sigma_1\}}=\mathbf{1}_{\{\eta_0=0\}}\bigg[-\mathbf{1}_{\{\omega^\eta_{-1}\geq2,\; \omega^\eta_0=\sigma_0, \;\omega^\eta_1=\sigma_1\}} \notag \\
&+\mathbf{1}_{\{\omega^\eta_0\geq 2\}}\pa{\mathbf{1}_{\{\omega^\eta_0=\sigma_0+1, \;\omega^\eta_1=\sigma_1-1\}}-2\mathbf{1}_{\{\omega^\eta_0=\sigma_0, \;\omega^\eta_1=\sigma_1\}}} \notag \\
&+\mathbf{1}_{\{\omega^\eta_1\geq 2\}}\pa{\mathbf{1}_{\{\omega^\eta_0=\sigma_0, \;\omega^\eta_1=\sigma_1+1\}}+\mathbf{1}_{\{\omega^\eta_0=\sigma_0-1, \;\omega^\eta_1=\sigma_1+1\}}-2\mathbf{1}_{\{\omega^\eta_0=\sigma_0, \;\omega^\eta_1=\sigma_1\}}} \notag \\
&+\mathbf{1}_{\{\omega^\eta_2\geq 2\}}\pa{\mathbf{1}_{\{\omega^\eta_0=\sigma_0, \;\omega^\eta_1=\sigma_1-1\}}-\mathbf{1}_{\{\omega^\eta_0=\sigma_0, \;\omega^\eta_1=\sigma_1\}}}\bigg] \notag \\
&+\mathbf{1}_{\{\eta_1=0, \; \omega^{\tau_{1}\eta}_{-1}\geq 2,\;\omega^{\tau_{1}\eta}_0=\sigma_0-1, \;\omega^{\tau_{1}\eta}_1=\sigma_1\}}+\mathbf{1}_{\{\eta_{-1}=0, \; \omega^{\tau_{-1}\eta}_0=\sigma_0+1, \;\omega^{\tau_{-1}\eta}_1=\sigma_1\}}. \label{eq:id2}
\end{align}
The last two terms rewrite as
\begin{equation} \label{eq:sum} \tau_{1}\mathbf{1}_{\{\eta_0=0, \; \omega^\eta_{-1}\geq 2,\;\omega^\eta_0=\sigma_0-1, \;\omega^\eta_1=\sigma_1\}}+\tau_{-1}\mathbf{1}_{\{\eta_0=0, \;\omega^\eta_0=\sigma_0+1, \;\omega^\eta_1=\sigma_1\}}.\end{equation}
In particular, since $\bar \mu$ was assumed to be translation invariant, the expectation of \eqref{eq:sum} with respect to $\bar \mu$ is also equal to the expectation of 
\[
\mathbf{1}_{\{\eta_0=0, \; \omega^\eta_{-1}\geq 2,\;\omega^\eta_0=\sigma_0-1, \;\omega^\eta_1=\sigma_1\}}+\mathbf{1}_{\{\eta_0=0, \;\omega^\eta_0=\sigma_0+1, \;\omega^\eta_1=\sigma_1\}}\; . 
\]
Using this, and taking the $\bar \mu$--expectation in both identities \eqref{eq:id1} and \eqref{eq:id2} we obtain as wanted:
\begin{align*}
\bar \mu\pa{\mathcal L _\infty \mathbf{1}_{\{\eta_0=0, \; \omega^\eta_0=\sigma_0, \;\omega^\eta_1=\sigma_1\}}}&=\bar \mu\Big(\mathbf{1}_{\{\eta_0=0\}}\big(\mathcal L^{\mathrm {ZR}} _\infty \mathbf{1}_{\{\omega_0=\sigma_0, \;\omega_1=\sigma_1\}}\big)(\omega^\eta)\Big)\\
&=\bar\mu(\eta_0=0)\; \bar\nu\pa{\mathcal L^{\mathrm{ZR}} _\infty \mathbf{1}_{\{\omega_0=\sigma_0, \;\omega_1=\sigma_1\}}}. \vphantom{\Big)}
\end{align*}
Since $\bar\mu$ is stationary for $\mathcal L _\infty$, the left hand side above vanishes, therefore so does the right hand side. 
As seen previously, we assumed that $\bar\mu(\eta_0=0)>0$, which proves 
$\bar \nu\pa{\mathcal L^{\mathrm{ZR}} _\infty \mathbf{1}_{\{\omega_0=\sigma_0, \;\omega_1=\sigma_1\}}}=0$, and in turn \eqref{eq:statZR}. This concludes the proof of Lemma \ref{lem:lemZRstat}.
\end{proof}

\subsection{Local ergodicity} \label{sec:ergod}
We now turn to the second brick necessary to prove Lemma \ref{lem:eqpi}.  Let $\mu_t^N$ denote the distribution on $\Sigma_N$ of $\eta(t)$, and define the space-time average
\begin{equation}
\label{eq:Defmubar}
\bar \mu_T^N:=\frac{1}{TN}\int_0^{T}\sum_{x\in \bb T_N}\mu_t^N\circ\tau_x^{-1} dt. 
\end{equation}
Recall definition \eqref{eq:Defrhoell} of $\rho_x^\ell$ and recall that we set $\rho^\ell=\rho_0^\ell$. We are now ready to state the following result, which, although proved differently, is analogous to the so-called \emph{one-block estimate} (see \cite[Section 5.4]{KL}).
\begin{proposition}[Local law of large numbers in the supercritical phase]
\label{prop:localerg}
Recall that $\pi_\rho$ was defined for any $\rho \in[0,1]$  in Definition \ref{def:gcm} and that $B_\ell=\{-\ell,\dots,\ell\}$. Recall also from \eqref{eq:defh} the definition of the function $h$. Then we have
\begin{equation}
\lim_{\ell\to\infty}\limsup_{N\to\infty}\bar \mu_T^N\bigg(\bigg|\frac{1}{2\ell+1}\sum_{y\in B_\ell}\tau_y h-\pi_{\rho^\ell}(h)\bigg|\bigg)=0. \label{eq:oneb}
\end{equation}
\end{proposition}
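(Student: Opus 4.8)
The plan is to prove \eqref{eq:oneb} by first letting $N\to\infty$ for each fixed $\ell$, identifying the limit points of the space-time averaged measure $\bar\mu_T^N$ as translation-invariant stationary measures for $\mathcal{L}_\infty$, and then exploiting the ergodic decomposition of Lemma~\ref{lem:definetti} together with a uniform law of large numbers to let $\ell\to\infty$. First I would establish that $\bar\mu_T^N$ is \emph{asymptotically stationary}. It is translation invariant by construction, and for any local function $f$, setting $g=\sum_{x\in\T_N}\tau_x f$ and using that the torus dynamics is translation invariant and that $\mathcal{L}_\infty f=\mathcal{L}_N f$ on the support of $f$, one gets $\bar\mu_T^N(\mathcal{L}_\infty f)=\frac{1}{TN}\int_0^T\mu_t^N(\mathcal{L}_N g)\,dt$. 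Since $\partial_t\mu_t^N(g)=N^2\mu_t^N(\mathcal{L}_N g)$ and $\|g\|_\infty\leq N\|f\|_\infty$, this telescopes to a quantity bounded by $2\|f\|_\infty/(TN^2)$, which vanishes. By compactness, any subsequential weak limit $\bar\mu$ of $(\bar\mu_T^N)_N$ on $\{0,1\}^\Z$ is translation invariant and satisfies $\bar\mu(\mathcal{L}_\infty f)=0$ for every local $f$, so Lemma~\ref{lem:definetti} yields $\bar\mu=\lambda\mu_{\mathcal F}+(1-\lambda)\int_{[\frac12,1]}\varpi(d\rho)\,\pi_\rho$.

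Next I would reduce the claim and dispose of the frozen component. For fixed $\ell$, the integrand $g_\ell:=\big|\frac{1}{2\ell+1}\sum_{y\in B_\ell}\tau_y h-\pi_{\rho^\ell}(h)\big|$ is a bounded local function, so $\limsup_N\bar\mu_T^N(g_\ell)=\bar\mu(g_\ell)$ for some limit point $\bar\mu$, and it suffices to show $\bar\mu(g_\ell)\to0$ as $\ell\to\infty$, \emph{uniformly} over the admissible $\bar\mu$. On $\mathscr F_\Z$ every $\tau_y h$ vanishes, since each monomial of $h$ contains a product of two adjacent occupation variables; moreover the constraint $\eta_x+\eta_{x+1}\leq 1$ forces $\rho^\ell\leq\frac{\ell+1}{2\ell+1}$, whence $\pi_{\rho^\ell}(h)=\mathcal{H}(\rho^\ell)\leq 2(2\rho^\ell-1)^+\leq\frac{2}{2\ell+1}$. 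Thus the frozen contribution is at most $2/(2\ell+1)$, deterministically and uniformly in $\mu_{\mathcal F}$.

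For the supercritical part I would prove the uniform law of large numbers $\sup_{\rho\in[\frac12,1]}\pi_\rho(g_\ell)\to0$. Recalling $\pi_\rho(h)=\mathcal{H}(\rho)$ and that $\mathcal{H}$ is Lipschitz on $[0,1]$ with constant $4$, the triangle inequality and Cauchy--Schwarz give
\[
\pi_\rho(g_\ell)\leq \Big(\mathrm{Var}_{\pi_\rho}\big(\tfrac{1}{2\ell+1}\textstyle\sum_{y\in B_\ell}\tau_y h\big)\Big)^{1/2}+4\big(\mathrm{Var}_{\pi_\rho}(\rho^\ell)\big)^{1/2}.
\]
Both variances are $\mathcal{O}(1/\ell)$ with a constant uniform in $\rho$, which follows from the exponential decay of correlations of $\pi_\rho$ established in \cite[Section 6.3]{BESS}. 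The cleanest way to see the uniformity is through the zero-range mapping $\Pi$: $\pi_\rho$ corresponds to the product measure $\nu_\alpha$ with $\alpha=\rho/(1-\rho)$, so the empty sites form a renewal process with i.i.d.\ spacings of variance $\alpha(\alpha-1)$, and the resulting prefactor in the variance of additive functionals is bounded on $[1,\infty)$ and in fact vanishes both as $\alpha\to1^+$ (i.e. $\rho\to\tfrac12$) and as $\alpha\to\infty$ (i.e. $\rho\to1$). Combining with Step~2, $\bar\mu(g_\ell)\leq 2/(2\ell+1)+\sup_{\rho}\pi_\rho(g_\ell)\to0$, which is \eqref{eq:oneb}.

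The main obstacle is precisely this uniformity in $\rho$ of the variance bounds. The naive exponential-decay estimate carries a $\rho$-dependent correlation length that a priori degenerates as $\rho\to\tfrac12$ and $\rho\to1$, where $\pi_\rho$ collapses onto a deterministic, long-range ordered configuration (alternating, resp.\ full); one must verify that the fluctuations of both $\frac{1}{2\ell+1}\sum_{y\in B_\ell}\tau_y h$ and $\rho^\ell$ nevertheless remain uniformly $\mathcal{O}(1/\ell)$ across the entire supercritical range. I expect this to be most conveniently handled via the explicit product structure of $\nu_\alpha$ in the zero-range picture, which makes the relevant prefactor explicit and confirms that it stays bounded (and even small) near the critical and full densities.
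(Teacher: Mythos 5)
Your overall architecture coincides with the paper's: identify limit points of $\bar\mu_T^N$ as translation-invariant stationary measures for $\mathcal{L}_\infty$ via the telescoping bound $|\bar\mu_T^N(\mathcal{L}_\infty f)|\leq C\|f\|_\infty/(TN^2)$, invoke Lemma~\ref{lem:definetti}, and treat the frozen and supercritical components separately. Your treatment of the frozen component is correct (and in fact slightly more careful than the paper's, which asserts $\rho^\ell\leq\frac12$ on $\mathscr F_\Z$ whereas the sharp bound is $\rho^\ell\leq\frac{\ell+1}{2\ell+1}$, giving $\pi_{\rho^\ell}(h)=O(1/\ell)$ rather than $0$ — immaterial for the limit).

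The gap is in the supercritical step, and it is exactly where you locate it. The claim that $\mathrm{Var}_{\pi_\rho}\big(\frac{1}{2\ell+1}\sum_{y\in B_\ell}\tau_y h\big)$ and $\mathrm{Var}_{\pi_\rho}(\rho^\ell)$ are $O(1/\ell)$ \emph{uniformly over} $\rho\in[\frac12,1]$ does \emph{not} follow from the exponential decay of correlations in \cite[Section 6.3]{BESS}: that decay is uniform only on $[\frac12+\varepsilon,1)$, and the correlation length genuinely diverges as $\rho\downarrow\frac12$, where $\pi_\rho$ converges to the period-two mixture $\frac12\delta_{\circ\bullet}+\frac12\delta_{\bullet\circ}$, which has long-range order. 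A bound of the form $|\mathrm{Cov}(\eta_0,\eta_x)|\leq Ce^{-c(\rho)|x|}$ with $c(\rho)\to0$ gives, after the triangle inequality, a variance bound blowing up like $\ell^{-1}(\rho-\frac12)^{-1}$; to rescue the uniform $O(1/\ell)$ one must exploit the oscillating sign of the correlations (equivalently, the renewal prefactor $\sigma^2/m^3$ you mention), and additionally $\sum_y\tau_yh$ is not a pure renewal-counting functional, so the random-index variance analysis would need to be carried out. You flag this as ``the main obstacle'' but do not resolve it, so as written the proof is incomplete. Note that the paper shows this uniform estimate is unnecessary: it splits $[\frac12,1)$ into $[\frac12+\varepsilon,1)$, where the BESS decay \emph{is} uniform, and $[\frac12,\frac12+\varepsilon)$, where no correlation decay is used at all — there one bounds $\pi_\rho\big(\big|\frac{1}{2\ell+1}\sum\tau_yh-\pi_\rho(h)\big|\big)\leq 2\pi_\rho(|h|)\leq 6\pi_\rho(\eta_0\eta_1)\leq 12\varepsilon$ and controls $\pi_\rho(|\pi_\rho(h)-\pi_{\rho^\ell}(h)|)$ by a Markov inequality using $\rho^\ell\geq\frac12$ a.s.\ under $\pi_\rho$. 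Replacing your uniform variance claim by such an $\varepsilon$-splitting (soft $L^1$ bounds near criticality, exponential decay away from it) would close the argument.
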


\begin{proof}[Proof of Proposition \ref{prop:localerg}]
Thanks to the work of the previous section, and the correlation decay for the grand canonical measures $\pi_\rho$ proved in \cite[Section 6.3]{BESS}, we are able to prove this proposition using the same arguments as in \cite{Funaki}. Therefore, we simply sketch out the proof here. First, note that for any $\rho\in[0,1]$, 
\begin{equation}\label{eq:pirhoh}\pi_\rho(h)=\frac{2\rho-1}{\rho} \mathbf{1}_{\{\rho > \frac12\}}=\mathcal{H}(\rho). \end{equation}
Recall the infinite volume generator $\mc L_\infty$ introduced in \eqref{eq:DefLNinf}. By periodically extending the configurations, one can see $\bar \mu_T^N$ as a measure on the set of infinite exclusion configurations, namely $\{0,1\}^{\Z}$. For any local function $f$ defined on this set, any $x\in\T_N$, and any $t\geq 0$, we can then write 
\[\frac{d\mu_t^N(\tau_xf)}{dt}=\mu_t^N (N^2\mathcal{L}_N\tau_xf)=N^2\mu_t^N (\tau_x\mathcal{L}_\infty f)\] for any $N$ large enough (depending on the support of $f$). In particular, averaging over $x\in \T_N$ and over the segment $[0,T]$, the identity above rewrites
\[\frac{1}{N^3 T}\sum_{x\in\T_N}\pa{\mu_T^N(\tau_xf)-\mu_0^N(\tau_xf)}=\bar \mu_T^N (\mathcal{L}_\infty f).\]
Since $f$ is a local function, it is in particular bounded, therefore the left hand side above vanishes as $N\to\infty$.
As a consequence, any limit point $\bar\mu^T$ of $\bar \mu_T^N$ is stationary for the infinite volume generator $\mathcal{L}_\infty$, and to obtain \eqref{eq:oneb} it is sufficient to prove that 
\begin{equation}
\label{eq:limOBE}
\lim_{\ell\to\infty}\bar \mu^T\bigg(\bigg|\frac{1}{2\ell+1}\sum_{y\in B_\ell}\tau_y h-\pi_{\rho^\ell}(h)\bigg|\bigg)=0,
\end{equation}
for every measure $\bar\mu^T$ which is stationary and translation invariant.
According to Lemma \ref{lem:definetti}, we can  decompose $\bar\mu^T$ as
\[\bar \mu^T=\lambda\mu_{\mc F}^T+(1-\lambda)\int_{[\frac12,1]}\varpi^T(d\rho)\pi_\rho,\]
where $\lambda \in [0,1]$, where the support of $\mu_{\mc F}^T$ is included in $\mathscr{F}_{\Z}$ (recall \eqref{eq:froset}), and $\varpi^T(d\rho)$ is a probability measure on $[\frac12,1]$. 

If $\eta \in \mc F_\Z$ is a frozen configuration, then it cannot contain two neighboring particles, therefore $\rho^\ell(\eta) \leq \frac12$, and both $h(\eta)$ and $\pi_{\rho^\ell(\eta)}(h)$ vanish a.s. Hence \eqref{eq:limOBE} trivially  holds with $\mu_{\mc F}^T$ 
instead of $\bar\mu ^T$. 

Similarly, $\pi_1$--a.s.\ we have $\frac{1}{2\ell+1}\sum_{y\in B_\ell}\tau_y h-\pi_{\rho^\ell}(h)=0$.
 We now prove 
\begin{equation*}
\lim_{\ell\to\infty}\int_{[\frac12,1)}\varpi^T(d\rho)\pi_\rho\bigg(\bigg|\frac{1}{2\ell+1}\sum_{y\in B_\ell}\tau_y h-\pi_{\rho^\ell}(h)\bigg|\bigg)=0. 
\end{equation*}
To do so, fix $\varepsilon >0$, and split the integral in $\rho$ over $[\frac12,1)$ as a first contribution over $[\frac12, \frac12+\varepsilon)$ and a second  
over $[\frac12+\varepsilon,1)$: \begin{itemize} \item If $\rho \in [\frac12+\varepsilon,1)$, one can straightforwardly show using the same proof as in \cite[Corollary 6.6]{BESS}, that  the correlations under the measures $\pi_\rho$ between two boxes at distance $\ell$ decay  exponentially as $e^{-C\ell}$, uniformly in $\rho\in [\frac12+\varepsilon, 1)$. Using the Lipschitz continuity of $\rho\mapsto\pi_\rho(h)$, one easily obtains that there exists a constant $C=C(\varepsilon)>0$ such that
\begin{align}\label{eq:boundspirho}
& \pi_\rho\bigg(\bigg|\frac{1}{2\ell+1}\sum_{y\in B_\ell}\tau_y h-\pi_{\rho^\ell}(h)\bigg|\bigg)\nonumber\\
& \leq \pi_\rho\bigg(\bigg|\frac{1}{2\ell+1}\sum_{y\in B_\ell}\tau_y h-\pi_{\rho}(h)\bigg|\bigg)+\pi_\rho\pa{\Big|\pi_{\rho}(h)-\pi_{\rho^\ell}(h)\Big|}  \\ 
&  =\mathcal{O}_\ell(e^{-C\ell})\nonumber,
\end{align}
which vanishes as $\ell\to\infty$ uniformly in $\rho\in [\frac12+\varepsilon, 1)$. 
\item If $\rho\in [\frac12,\frac12+\varepsilon)$, we use the exact same bound, namely \eqref{eq:boundspirho}, and the fact that   $\pi_\rho(h)=\frac{2\rho-1}{\rho}$.  Moreover, $\rho^\ell\geq \frac12$ a.s.~under $\pi_\rho$, since $\pi_\rho$ only charges configurations without consecutive empty sites. In particular, for any $\rho\leq \frac12+\varepsilon$, the second term in \eqref{eq:boundspirho} can be estimated for any $ K>0$ by 
\[\pi_\rho\pa{\Big|\pi_{\rho}(h)-\pi_{\rho^\ell}(h)\Big|}\leq 8 K\varepsilon+\pi_\rho\big(\rho^\ell- \tfrac 12>K\varepsilon \big).\]
By Markov inequality, the second term in the right hand side is less than $ 1/K$. Therefore, letting $\ell\to\infty$, then $\varepsilon\to0$ and then $K\to\infty$, proves that the second term in \eqref{eq:boundspirho} vanishes uniformly as $\ell\to\infty$ and $\varepsilon\to 0$.  The correlations, however, no longer decay uniformly.
But we can write, recalling the expression \eqref{eq:defh} for the function $h$,
\[\pi_\rho\bigg(\bigg|\frac{1}{2\ell+1}\sum_{y\in B_\ell}\tau_y h-\pi_{\rho}(h)\bigg|\bigg)\leq 2\pi_\rho( |h|)\leq 6\pi_\rho( \eta_0\eta_1)\leq 12\varepsilon,\]
uniformly in  $\rho\in [\frac12,\frac12+\varepsilon)$. 
\end{itemize} This proves Proposition \ref{prop:localerg}. \end{proof}

\subsection{Proof of Lemma \ref{lem:eqpi}} \label{sec:prooflemma}
We  now closely follow  Funaki's proof \cite{Funaki}. Recall that to prove Lemma \ref{lem:eqpi}, one needs to show the following: any limit point $\bar{\mathcal{P}}^*$ as $N\to\infty$ then $\ell\to\infty$ of the sequence $(\bar{\mathcal{P}}_{N,\ell})_{1\leq \ell\leq N}$ satisfies 
\begin{equation}
\label{eq:rhorho'negbis}
\bar{\mathcal{P}}^*\Bigg(\int_{0}^T \int_{\bb T} \int_{[0,1]}\mathcal{H}(r) \bigg(r-\int_{[0,1]}r'p_t(u, dr')\bigg)p_t( u, dr) du dt \Bigg)=0,
\end{equation}
with $\mathcal{H}(r)=\frac{2r-1}{r}{\bf 1}_{\{r\geq \frac12\}} $ and $p_t$ defined by \eqref{eq:YM1}.

\medskip

For $u\in \frac1N \T_N$, $t\in[0,T]$, we let $H^N:t,u\mapsto H^N_t(u)$ be the solution of the discrete heat equation on $\frac1N\bb T_N$ with $N$ particles initially  at the origin, that is
\begin{equation}
\label{eq:heat}
 \begin{cases}
\partial_tH^N_t(u)=\Delta^NH^N_t(u), & \quad u\in \frac1N\bb T_N, \; t\in[0,T]\\ 
H^N_0(u)=N\;{\bf 1}_{\{u=0\}} , & \quad u\in \frac1N \T_N.
\end{cases},
\end{equation}
where $\Delta^N$ is the discrete laplacian defined in \eqref{eq:DefDeltaN}.
One obtains straightforwardly  (cf.~\cite[p.~589]{Funaki}) an explicit expression for $H^N_t(\frac x N)$: for any $x\in \T_N$
\begin{equation}
H_t^N\big(\tfrac x N \big)=1-{\bf{1}}_{\{N\;\mathrm{is\, even} \}}e^{-4tN^2}\cos(\pi x)+2\sum_{k=1}^{N/2}e^{-t\lambda_k^N}\cos\big(\tfrac{2k\pi x}{N}\big), \label{eq:HtN}
\end{equation}
where 
\[\lambda_k^N:=4N^2\sin^2\big(\tfrac{k\pi}{N}\big).\] 
For any time $\tau\geq 0$, let us now introduce 
\begin{align} 
\label{eq:DefV}
\mathcal V_\tau^{N,\ell} : & = \E_{\mu^{N}}\Bigg[\int_{0}^T\frac{1}{N^2}\sum_{y\in\T_N} H_{\tau}^{N}\big(\tfrac y N\big)\sum_{x\in\T_N}\big(\rho^{\ell}_{x+y}(t)+\rho^{\ell}_{x-y}(t)\big)\mathcal{H}(\rho^{\ell}_x(t))dt \Bigg]  \\
& = T\;\bar\mu^{N}_T\bigg(\frac{1}{N}\sum_{y\in\bb T_N} H_{\tau}^{N}\big(\tfrac y N \big)\big(\rho^{\ell}_y+\rho^{\ell}_{-y}\big)\mathcal{H}\big(\rho^{\ell}_0\big)\bigg), \label{eq:DefVbis}
\end{align}
where in the last identity we used the definition of  $\bar\mu_T^N$ given in \eqref{eq:Defmubar}.

For convenience sake, we assume that the sequence $\bar{\mathcal{P}}_{N,\ell}$ converges to $ \bar{\mathcal{P}}^*$ as $N\to\infty$ then $\ell\to \infty$ (in particular the intermediate limits as $N\rightarrow\infty$ for fixed $\ell$ exist). At any moment this assumption can be dropped by taking an arbitrary convergent 
subsequence instead. We prove two important results about $\mathcal V_\tau^{N,\ell}$:

 \begin{lemma}
\label{lem:lim1}
\begin{equation}
\label{eq:lem1}
\lim_{\Theta\to \infty}\lim_{\ell\to\infty}\lim_{N\to\infty} 
\mathcal{V}_{\Theta/N^2}^{N,\ell} 
=2\bar{\mathcal{P}}^*\pa{\int_{0}^T \int_{\bb T} \int_{[0,1]}r \mathcal{H}(r)  p_t( u, dr) du dt }.
\end{equation}
\end{lemma}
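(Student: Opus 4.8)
The plan is to read $N^{-1}H^N_\tau$ as a random-walk kernel, to replace the heat-kernel average of the density by the \emph{local} density at the cost of an error that vanishes in the prescribed order of limits, and then to recognise the remaining quantity as a bounded continuous functional of the Young measure. \textbf{Step 1.} Since $H^N_\tau(\cdot/N)$ solves \eqref{eq:heat} with $\Delta^N$ given by \eqref{eq:DefDeltaN}, the function $p^N_\tau(x):=\tfrac1N H^N_\tau(\tfrac xN)$ is, for every $\tau$, a probability measure on $\T_N$, namely the law at time $N^2\tau$ of the continuous-time simple random walk generated by the discrete Laplacian. For $\tau=\Theta/N^2$ this is the walk run for effective time $\Theta$; comparing with the walk on $\Z$, whose torus projection only contracts distances, yields the first-moment bound
\begin{equation}
\label{eq:rwmoment}
\sum_{y\in\T_N}p^N_{\Theta/N^2}(y)\,|y|_{\T_N}\le C\sqrt{\Theta}
\end{equation}
for a universal constant $C$, uniformly in $N$, where $|y|_{\T_N}$ is the graph distance to the origin.

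\textbf{Step 2.} Using \eqref{eq:DefVbis} and $p^N_\tau=\tfrac1N H^N_\tau$, I would compare $\sum_y p^N_\tau(y)\pa{\rho^\ell_y+\rho^\ell_{-y}}$ with $2\rho^\ell_0$. Because $B_\ell(y)$ and $B_\ell(0)$ differ by at most $2|y|_{\T_N}$ sites, one has the deterministic pointwise bound $|\rho^\ell_{\pm y}-\rho^\ell_0|\le |y|_{\T_N}/\ell$ for every configuration. Together with $\|\mathcal H\|_\infty=1$, $\rho^\ell\in[0,1]$, and \eqref{eq:rwmoment}, this gives
\begin{equation}
\label{eq:mainrepl}
\Big|\mathcal V^{N,\ell}_{\Theta/N^2}-2T\,\bar\mu^N_T\pa{\rho^\ell_0\,\mathcal H(\rho^\ell_0)}\Big|\le \frac{2CT\sqrt\Theta}{\ell},
\end{equation}
an estimate uniform in $N$. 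This replacement is the heart of the argument, and it dictates the order of the limits in \eqref{eq:lem1}: the error only disappears once $\ell\to\infty$ is taken \emph{before} $\Theta\to\infty$.

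\textbf{Step 3.} Unfolding $\bar\mu^N_T$ from \eqref{eq:Defmubar} and setting $\psi(r):=r\,\mathcal H(r)$, the main term in \eqref{eq:mainrepl} equals
\[
2T\,\bar\mu^N_T\pa{\rho^\ell_0\,\mathcal H(\rho^\ell_0)}=2\,\bar{\mathcal P}_{N,\ell}\pa{\int_0^T\ang{\pi^{N,\ell}_t,\,1\cdot\psi}\,dt},
\]
since $\ang{\pi^{N,\ell}_t,1\cdot\psi}=\tfrac1N\sum_x\rho^\ell_x\mathcal H(\rho^\ell_x)$. As $\mathcal H$ is continuous on $[0,1]$ with $\mathcal H(\tfrac12)=0$, the map $\psi$ is bounded and continuous, so $\{m_t,\pi_t\}\mapsto\int_0^T\ang{\pi_t,1\cdot\psi}\,dt$ is a bounded continuous functional on the Skorokhod space. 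Letting $N\to\infty$ and then $\ell\to\infty$, the weak convergence $\bar{\mathcal P}_{N,\ell}\to\bar{\mathcal P}^*$ and the identification $\pi_t(du,dr)=p_t(u,dr)\,du$ from Lemma \ref{lem:YM} turn this main term into $2\,\bar{\mathcal P}^*\pa{\int_0^T\int_{\bb T}\int_{[0,1]}r\,\mathcal H(r)\,p_t(u,dr)\,du\,dt}$, while the right-hand side of \eqref{eq:mainrepl} tends to $0$. As the resulting limit is independent of $\Theta$, the outer limit $\Theta\to\infty$ is immediate and \eqref{eq:lem1} follows.

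\textbf{Main obstacle.} The delicate point is Step 2: one must secure the first-moment estimate \eqref{eq:rwmoment} uniformly in $N$ (the torus wrap-around being harmless at fixed $\Theta$) and control the $\sqrt\Theta/\ell$ error before sending $\Theta\to\infty$, which is exactly what the order of limits in \eqref{eq:lem1} permits.
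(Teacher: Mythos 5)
Your proof is correct and follows essentially the same route as the paper: both replace $\rho^\ell_{x\pm y}$ by $\rho^\ell_x$ using the concentration of $\tfrac1N H^N_{\Theta/N^2}$ at microscopic scale together with the deterministic bound $|\rho^\ell_{\pm y}-\rho^\ell_0|\le |y|/\ell$, and then identify the main term via the Young measure and Lemma \ref{lem:YM}. The only (harmless) difference is that you control the replacement error with a single first-moment bound $C\sqrt\Theta/\ell$ on the random-walk kernel, whereas the paper splits the sum at $|y|=\Theta$ and uses an exponential tail estimate for the far part.
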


\begin{lemma}
\label{lem:lim2}
\begin{equation}
\label{eq:lem2}
\lim_{\theta\to 0}\lim_{\ell\to\infty}\lim_{N\to\infty} \mathcal V_{\theta}^{N,\ell}
=2\bar{\mathcal{P}}^*\pa{\int_{0}^T \int_{\bb T} \int_{[0,1]}\mathcal{H}(r)p_t( u, dr) \int_{[0,1]}r'p_t(u, dr') du dt }.
\end{equation}
\end{lemma}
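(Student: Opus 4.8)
\emph{Strategy.} The plan is to first replace the discrete heat kernel $H^N_\theta$ by its continuous counterpart on $\T$, after which $\mathcal V^{N,\ell}_\theta$ becomes a \emph{bilinear} functional of the Young measure $\pi^{N,\ell}_t$. Such a functional is weakly continuous, so it passes to the limit under $\bar{\mathcal P}_{N,\ell}\Rightarrow\bar{\mathcal P}^*$, turning the two-point quantity $\rho^\ell_{x+y}\mathcal H(\rho^\ell_x)$ into a product of \emph{one-point} Young-measure averages sitting at macroscopic distance $|w-u|$. The coincidence of these two base points, which is visible in the target identity, is then restored by letting $\theta\to0$, where the continuous heat kernel acts as an approximate identity. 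Concretely, I would let
\[ G_\theta(v)=1+2\sum_{k\geq1}e^{-4\pi^2k^2\theta}\cos(2\pi k v) \]
be the heat kernel on $\T$: it is smooth, even, nonnegative, of mass $1$, and an approximate identity as $\theta\to0$. Comparing with the explicit expression \eqref{eq:HtN} and using $\lambda^N_k=4N^2\sin^2(\tfrac{k\pi}N)\geq 16k^2$ for $1\leq k\leq N/2$, dominated convergence in the Fourier series gives, for each fixed $\theta>0$, $\sup_{x\in\T_N}|H^N_\theta(\tfrac xN)-G_\theta(\tfrac xN)|\to0$ as $N\to\infty$. Since $\rho^\ell_\bullet,\mathcal H\in[0,1]$, replacing $H^N_\theta$ by $G_\theta$ in \eqref{eq:DefV} costs only an error controlled by this supremum, hence negligible as $N\to\infty$.

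The key algebraic step is then to perform the substitutions $z=x\pm y$ in the two summands of \eqref{eq:DefV}, and to use that $G_\theta$ is even, so that the two contributions coincide. Recalling $\pi^{N,\ell}_t(dw,dr)=\tfrac1N\sum_z\delta_{z/N}(dw)\delta_{\rho^\ell_z}(dr)$, the inner sum equals \emph{exactly}
\[
2\int_{(\T\times[0,1])^2} G_\theta(w-u)\,r\,\mathcal H(r')\;\pi^{N,\ell}_t(dw,dr)\,\pi^{N,\ell}_t(du,dr').
\]
The integrand $(w,r,u,r')\mapsto G_\theta(w-u)\,r\,\mathcal H(r')$ is bounded and continuous on the compact set $(\T\times[0,1])^2$ (note that $\mathcal H$ is continuous on $[0,1]$, with $\mathcal H(\tfrac12)=0$), so $\pi\mapsto\int G_\theta(w-u)r\mathcal H(r')\,\pi\otimes\pi$ is weakly continuous; combined with the continuity of the time integral at $\bar{\mathcal P}^*$--a.s.\ continuous trajectories (Proposition \ref{prop:Lebesgue}, Lemma \ref{lem:YM}), the whole bracket is a bounded continuous functional on path space. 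Passing to the limit $N\to\infty$ then $\ell\to\infty$ along $\bar{\mathcal P}_{N,\ell}\Rightarrow\bar{\mathcal P}^*$, and using $\pi_t(du,dr)=p_t(u,dr)\,du$ from Lemma \ref{lem:YM}, I obtain
\[
\lim_{\ell\to\infty}\lim_{N\to\infty}\mathcal V^{N,\ell}_\theta = 2\,\bar{\mathcal P}^*\pa{\int_0^T\int_\T (G_\theta * \bar\rho_t)(u)\int_{[0,1]}\mathcal H(r)\,p_t(u,dr)\,du\,dt},
\]
where $\bar\rho_t(w):=\int_{[0,1]}r'\,p_t(w,dr')$ and $(G_\theta*\bar\rho_t)(u)=\int_\T G_\theta(w-u)\bar\rho_t(w)\,dw$.

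For the final limit $\theta\to0$, I would invoke that $G_\theta*\bar\rho_t\to\bar\rho_t$ in $L^1(\T)$ (approximate identity, $\bar\rho_t\in[0,1]$), while $u\mapsto\int\mathcal H(r)p_t(u,dr)\in[0,1]$ stays bounded; hence the inner space integral converges to $\int_\T\bar\rho_t(u)\int\mathcal H(r)p_t(u,dr)\,du$ for a.e.\ $t$. As all integrands are bounded by $1$, dominated convergence in $t$ and under $\bar{\mathcal P}^*$ then yields the announced identity.

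The main obstacle, to my mind, is not any single estimate but the bookkeeping of the passage to the limit of a functional that is \emph{quadratic} in $\pi^{N,\ell}_t$: one must ensure it is continuous for the Skorokhod topology on the path space, not merely for the instantaneous weak topology, so that $\bar{\mathcal P}_{N,\ell}\Rightarrow\bar{\mathcal P}^*$ can be invoked. This works precisely because, at fixed $\theta$, the spatial kernel $G_\theta$ is smooth and bounded, so weak convergence of product measures on the compact space $(\T\times[0,1])^2$ is automatic (the diagonal $\{z=x\}$ carries empirical mass $1/N\to0$ and the limiting $\pi_t$ is non-atomic in the spatial variable), and because $\bar{\mathcal P}^*$ charges only time-continuous trajectories. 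I would keep careful track of the prescribed order of limits ($N\to\infty$, then $\ell\to\infty$, then $\theta\to0$), which is exactly the order under which $\bar{\mathcal P}_{N,\ell}\Rightarrow\bar{\mathcal P}^*$ and under which the heat-kernel replacement and the approximate-identity collapse are valid.
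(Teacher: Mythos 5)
Your proposal is correct and follows essentially the same route as the paper: replace the discrete kernel $H^N_\theta$ by the continuous heat kernel on $\T$ at fixed macroscopic time $\theta$, pass to the Young-measure limit $\bar{\mathcal P}_{N,\ell}\Rightarrow\bar{\mathcal P}^*$ using the boundedness of all integrands, and then let $\theta\to 0$ via the approximate-identity property. The only cosmetic difference is that the paper first trades $\rho^\ell_{x\pm y}$ for $\eta_y$ (absorbing the local average into the smooth kernel) so that one slot becomes the empirical measure and yields $\rho_t(v)$ directly, whereas you keep both slots as Young measures and invoke $\int r'\,p_t(u,dr')=\rho_t(u)$ at the end; the two are equivalent.
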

The proofs of Lemmas \ref{lem:lim1} and  \ref{lem:lim2} are straightforward adaptations of Lemmas 5.5 and 5.6 in \cite{Funaki}.
\begin{proof}
[Proof of Lemma \ref{lem:lim1}]
We first consider $H_{\Theta/N^2}^{N}(\frac y N)$ sampled at a large \emph{microscopic} time $\Theta/N^2$. 
Roughly speaking, $H_{\Theta/N^2}^{N}$  converges to a macroscopic Dirac measure at $0$, so that convoluted with $H_{\Theta/N^2}^N$, 
we get:  as $N\to \infty$,  $\rho^{\ell}_t(x\pm y)\simeq\rho^{\ell}_t(x)$ for $\ell$ large enough, which yields the identity in the lemma. 
More precisely, we use the second expression of $\mathcal{V}_{\Theta/N^2}^{N,\ell}$ given in \eqref{eq:DefVbis}, and we split the sum in $y$ in two parts, depending on whether $|y|\leq \Theta$ or $|y|>\Theta$: \begin{itemize} \item in the second case $|y|>\Theta$, $H_{\Theta/N^2}^{N}(\frac y N )$ is small:

\noindent more precisely, consider a continuous time random walk $X_t$ initially at site $0$, and jumping at rate $N^2$ on each of its neighbors on $\T_N$, by Feynman Kac's formula, we can write  
\[\sum_{|y|>\Theta}\frac{1}{N}H_{\Theta/N^2}^{N}\big( \tfrac y N \big)=\bb P(|X_{\Theta/N^2}|>\Theta)=\mc O(e^{-\Theta}),\]
where the second identity comes from a standard estimate on symmetric random walks.
In particular, since $\bar\mu^{N}_T(\rho^{\ell}_y)$ is uniformly bounded in $y$, the contribution of the sum $|y|>\Theta $ 
vanishes as $N\to\infty$, then $\ell\to\infty$, and then $\Theta\to\infty$ ;
\item in the first case $|y|\leq \Theta$,  $|\rho^{\ell}_y-\rho^{\ell}_0|\leq \frac{2\Theta}{2\ell +1}.$ \end{itemize} 
Therefore, the left hand side in \eqref{eq:lem1}  rewrites as
\[\lim_{\ell\to\infty}\lim_{N\to\infty} T\;\bar\mu^{N}_T\pa{2\rho^{\ell}_0\;\mathcal{H}\big(\rho^{\ell}_0\big) },\]
which is also equal to (recalling Definition \ref{def:Youngmeasure}, and also \eqref{eq:Defmubar} and \eqref{eq:Defrhoell})
\[ 
\lim_{\ell\to\infty}\lim_{N\to\infty}  \int_0^T \int_{\T \times [0,1]} 2r\mathcal{H}(r) \; \pi_t^{N,\ell}(du,dr)dt.
\]
Thanks to Lemma \ref{lem:YM} we get the result \eqref{eq:lem1}.  \end{proof}

\begin{proof}[Proof of Lemma \ref{lem:lim2}]
We now consider a sample of $H_{\theta}^{N}$ at a small \emph{macroscopic} time $\theta$. 
Denote $\mathfrak{h}_s(u)$ the heat kernel on $\bb T$, namely
\[\mathfrak{h}_s(u)=1+ 2 \sum_{k =1}^\infty e^{-sk} \cos(2\pi k u).\]
Since $\E_{\mu^{N}}[\mathcal{H}(\rho^{\ell}_x(t))]$ is bounded uniformly in $N$, 
$x\in \bb T_N$, and $t\leq T$, we obtain from Lemma \ref{lem:YM}
\begin{align*}
\lim_{\theta\to 0}\lim_{\ell\to\infty}&\lim_{N\to\infty} 
\E_{\mu^N}\Bigg[\int_{0}^T\frac{1}{N^2}\sum_{y\in\T_N} H_{\theta}^{N}\big(\tfrac y N \big)\sum_{x\in\T_N}
\big(\rho^{\ell}_{x+y}(t)+\rho^{\ell}_{x-y}(t)\big)\mathcal{H}(\rho^{\ell}_x(t))dt\Bigg]\\
=&\lim_{\theta\to 0}\lim_{\ell\to\infty}\lim_{N\to\infty} 
2\E_{\mu^N}\Bigg[\int_{0}^T\frac{1}{N^2}\sum_{x,y\in\T_N} H_{\theta}^{N}\big(\tfrac{x-y}{N} \big)
\eta_y(t)\mathcal{H}(\rho^{\ell}_x(t))dt\Bigg]\\
=& \; \lim_{\theta\to 0}2\bar{\mathcal{P}}^*\Bigg(\int_{0}^T dt\int_{\bb T}du\int_{\bb T}dv \; \mathfrak{h}_{\theta}(u-v)  \rho_t(v) \int_{[0,1]}\mathcal{H}(r)p_t(u, dr)\Bigg),
\end{align*}
which converges as $\theta\to 0$ to the wanted quantity since $\mathfrak{h}_{\theta}(v)dv$ converges to a Dirac at the origin, and since $ \rho_t(v)=\int_{[0,1]}r'p_t(v, dr').$
\end{proof}

To end the proof of Lemma \ref{lem:eqpi}, we now need to show that both limits \eqref{eq:lem1} and \eqref{eq:lem2} are equal. For a configuration $\eta\in \Sigma_N$, we define the averaged  empirical measure on $\bb T_N$, where the density at each point is averaged out over a large microscopic box of size $\ell\geq 1$, namely
\[m^{N,\ell}(du)=m^{N,\ell}(\eta, du):=\frac 1N\sum_{x\in \bb T_N}\delta_{x/N}(du) \rho^\ell_x(\eta),\]
where $\rho^\ell_x(\eta)$ was defined in \eqref{eq:Defrhoell} as the density in a box of size $\ell$ around $x$. Once again, when $\eta$ depends on time, 
we shorten $ m^{N,\ell}_t=m^{N,\ell}(\eta(t))$. Note in particular that for any function $\xi$ on $\T$, we have $\langle m^{N,\ell}, \xi\rangle=\ang{\pi^{N,\ell},\xi\cdot r }$, where the Young measure $\pi^{N,\ell}$ was introduced in Definition \ref{def:Youngmeasure}, and $r$ is the short notation for the identity application on $[0,1]$. For any $1 \leq \ell \leq N$ and any time $s \geqslant 0$, we introduce 
\begin{equation}\label{eq:deR}  \mathcal{R}^{N,\ell}_s:=\E_{\mu^N}\cro{\langle m_T^{N,\ell},m_T^{N,\ell}*H_s^N\rangle-\langle m_0^{N,\ell},m_0^{N,\ell}*H_s^N\rangle},\end{equation}
where the convolution $*$ between a measure $m$ and a function $\xi^N$ on $\frac1N\bb T_N$ is defined as the function $(m*\xi^N)(u)=\langle m, \xi^N(u-\cdot)\rangle $ for any $u \in \frac1N\bb T_N$. In particular, for any times $t$, $s$ we have (recall Definition \ref{def:Youngmeasure})
\[\langle m_t^{N,\ell},m_t^{N,\ell}*H_s^N\rangle=\frac 1{N^2} \sum_{x,y\in \bb T_N}H_s^N\big(\tfrac y N\big)\rho^\ell_{x-y}(t)\rho^\ell_{x}(t).\]
We need the following two results.
\begin{lemma} \label{lem:limint}
\[\lim_{\theta\to 0}\lim_{\Theta\to\infty}\lim_{\ell\to\infty}\lim_{N\to\infty} 
\int_{\Theta/N^2}^{\theta} \mathcal{R}^{N,\ell}_s ds=0\; . \]
\end{lemma}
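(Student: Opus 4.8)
The plan is to prove Lemma~\ref{lem:limint} \emph{softly}, by a single uniform bound on the quadratic observable inside \eqref{eq:deR}, rather than by expanding $\mathcal R^{N,\ell}_s$ through Dynkin's formula. The key point is that, for every configuration and every $s\ge0$,
\[
0\le \langle m_t^{N,\ell},m_t^{N,\ell}*H_s^N\rangle
=\frac 1{N^2}\sum_{x,y\in\bb T_N}H_s^N\big(\tfrac y N\big)\rho^\ell_{x-y}(t)\rho^\ell_x(t)\le 1 .
\]
The identity and the lower bound are immediate from \eqref{eq:Defrhoell} and the expression recalled just after \eqref{eq:deR}. For the upper bound I would use $0\le\rho^\ell_z(t)\le1$ to drop the two density factors, and then the normalization $\tfrac1N\sum_{y}H_s^N(\tfrac yN)=1$, so that $\tfrac1{N^2}\sum_{x,y}H_s^N(\tfrac yN)=\tfrac1N\sum_x 1=1$.

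It remains to justify that $H_s^N\ge 0$ and $\tfrac1N\sum_y H^N_s(\tfrac yN)=1$. I would avoid reading positivity off the oscillatory formula \eqref{eq:HtN}, and instead use the probabilistic reading already employed in the proof of Lemma~\ref{lem:lim1}: if $(X_s)_{s\ge0}$ denotes the continuous-time random walk on $\bb T_N$ started at $0$ and jumping at rate $N^2$ onto each neighbour, then $u\mapsto\tfrac1N H_s^N(u)$ is exactly its transition kernel, $\tfrac1N H_s^N(\tfrac yN)=\P(X_s=y)$, solving \eqref{eq:heat}. Positivity is then automatic and $\tfrac1N\sum_y H_s^N(\tfrac yN)=\sum_y\P(X_s=y)=1$. (Mass conservation can alternatively be seen by summing the discrete Laplacian $\Delta^N$ over the periodic torus, and positivity from the maximum principle for the Markov semigroup $e^{s\Delta^N}$.)

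With the bound $\langle m_t^{N,\ell},m_t^{N,\ell}*H_s^N\rangle\in[0,1]$ in hand, taking $\E_{\mu^N}$ of the difference in \eqref{eq:deR} gives $|\mathcal R^{N,\ell}_s|\le1$ for every $N,\ell$ and every $s\ge0$. Consequently
\[
\Big|\int_{\Theta/N^2}^{\theta}\mathcal R^{N,\ell}_s\,ds\Big|\le \int_{\Theta/N^2}^{\theta}|\mathcal R^{N,\ell}_s|\,ds\le \theta-\frac{\Theta}{N^2}\le\theta,
\]
an estimate uniform in $N$, $\ell$ and $\Theta$. Hence the three inner limits $N\to\infty$, $\ell\to\infty$, $\Theta\to\infty$ all keep the integral bounded by $\theta$ in absolute value, and the final limit $\theta\to0$ yields $0$, as claimed.

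The honest assessment is that Lemma~\ref{lem:limint} carries almost no difficulty once one sees the uniform bound; the ordering $\Theta\to\infty$ before $\theta\to0$ plays no role here and is kept only for compatibility with the companion statement. The one genuine point is that the \emph{softness} is essential: expanding $\mathcal R^{N,\ell}_s=\int_0^T\E_{\mu^N}[N^2\mathcal L_N\langle m^{N,\ell},m^{N,\ell}*H^N_s\rangle]\,dt$ is a dead end, since the gradient (drift) part, governed by the identity $N^2\mathcal L_N\rho^\ell_x=\Delta^N\big(\tfrac{1}{2\ell+1}\sum_{z\in B_\ell}\tau_{x+z}h\big)$, is of size $O(N^2/\ell)$, while the carr\'e du champ involves $H_s^N(0)$, which is large for small $s$; these pieces only become manageable after a summation by parts against the heat kernel together with the integration in $s$, which telescopes $\int\partial_s H^N_s\,ds$ into a bounded difference. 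That delicate bookkeeping is precisely the content of the companion result, which should tie $\int_{\Theta/N^2}^{\theta}\mathcal R^{N,\ell}_s\,ds$ to the difference $\mathcal V^{N,\ell}_{\theta}-\mathcal V^{N,\ell}_{\Theta/N^2}$ using the gradient structure and the one-block estimate of Proposition~\ref{prop:localerg}. Combined with Lemma~\ref{lem:limint}, that identity forces the two limits of Lemmas~\ref{lem:lim1} and~\ref{lem:lim2} to agree, which is exactly \eqref{eq:rhorho'negbis}.
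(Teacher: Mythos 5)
Your proof is correct and follows essentially the same route as the paper: the paper likewise observes that $H_s^N\geq 0$ and $\tfrac1N\sum_y H_s^N(\tfrac yN)=1$ force $\langle m_t^{N,\ell},m_t^{N,\ell}*H_s^N\rangle\in[0,1]$, hence $\mathcal R^{N,\ell}_s$ is uniformly bounded and the integral over $[\Theta/N^2,\theta]$ is $O(\theta)$. Your additional justification of positivity via the random-walk transition kernel is a harmless elaboration of what the paper already uses elsewhere (in the proof of Lemma~\ref{lem:lim1}).
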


\begin{lemma}
\label{lem:limRkl}
For any $\theta>0$,
\[
\lim_{\Theta\to\infty}\lim_{\ell\to\infty}\lim_{N\to\infty} 
\int_{\Theta/N^2}^{\theta}\big( \mathcal{R}^{N,\ell}_s -  \mathcal{T}^{N,\ell}_s\big)  ds = 0,
\]
where 
\[
 \mathcal{T}^{N,\ell}_s:=\E_{\mu^N}\Bigg[\int_{0}^T\frac{1}{N^2}\sum_{y\in\T_N}\Delta^N H_s^{N}\big(\tfrac y N\big)\sum_{x\in\T_N}\pa{\rho^{\ell}_{x+y}(t)+\rho^{\ell}_{x-y}(t)}\mathcal{H}(\rho^\ell_x(t))dt\Bigg].
\]
\end{lemma}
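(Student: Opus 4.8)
The plan is to apply Dynkin's formula to the quadratic functional in \eqref{eq:deR} and to identify $\mathcal R^{N,\ell}_s-\mathcal T^{N,\ell}_s$ as the sum of a local-ergodicity defect and a carré-du-champ term, each of which vanishes after integration in $s$. Since $\langle m^{N,\ell}_t,m^{N,\ell}_t*H^N_s\rangle=G_s(\eta(t))$ with $G_s(\eta):=\frac1{N^2}\sum_{x,z\in\T_N}H^N_s(\frac{x-z}N)\rho^\ell_z\rho^\ell_x$ independent of time, the martingale in Dynkin's formula has vanishing expectation and $\mathcal R^{N,\ell}_s=\E_{\mu^N}[\int_0^T N^2\mathcal L_N G_s(\eta(t))\,dt]$. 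I would then expand $N^2\mathcal L_N G_s$ through the Leibniz rule $\mathcal L_N(fg)=f\mathcal L_N g+g\mathcal L_N f+\Gamma(f,g)$, where $\Gamma(f,g)=\sum_v c_{v,v+1}\nabla_{v,v+1}f\,\nabla_{v,v+1}g$ and $\nabla_{v,v+1}f=f(\eta^{v,v+1})-f(\eta)$, splitting it into a \emph{main term} and a \emph{carré-du-champ} term.

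For the main term I would use the gradient structure $\mathcal L_N\eta_w=\tau_{w-1}h-2\tau_w h+\tau_{w+1}h$ (coming from the current $h$ of \eqref{eq:defh}) together with two exact summations by parts on the torus. The key point is that the resulting wide second difference of $H^N_s$ telescopes exactly,
\[
H^N_s\big(\tfrac{w+\ell+1-z}N\big)-H^N_s\big(\tfrac{w+\ell-z}N\big)-H^N_s\big(\tfrac{w-\ell-z}N\big)+H^N_s\big(\tfrac{w-\ell-1-z}N\big)=\tfrac1{N^2}\sum_{j\in B_\ell(w)}\Delta^N H^N_s\big(\tfrac{j-z}N\big),
\]
which gives the identity
\[
\text{main term}=\frac2{N^2}\sum_{z,j\in\T_N}\rho^\ell_z\Big(\tfrac1{2\ell+1}\sum_{w\in B_\ell(j)}\tau_w h\Big)\Delta^N H^N_s\big(\tfrac{j-z}N\big).
\]
By parity of $H^N_s$, the integrand of $\mathcal T^{N,\ell}_s$ is exactly this expression with the local average of $\tau_w h$ replaced by $\mathcal H(\rho^\ell_j)=\pi_{\rho^\ell_j}(h)$ (recall \eqref{eq:pirhoh}). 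Thus $\mathcal R^{N,\ell}_s-\mathcal T^{N,\ell}_s$ is the expectation of the carré-du-champ plus this defect. Integrating the defect in $s$ via $\int_{\Theta/N^2}^\theta\Delta^N H^N_s\,ds=H^N_\theta-H^N_{\Theta/N^2}$ (from \eqref{eq:heat}) and using the mass bound $\sum_u|H^N_\theta-H^N_{\Theta/N^2}|(\frac uN)\le 2N$, I would bound it by $4T\,\bar\mu^N_T\big(\big|\tfrac1{2\ell+1}\sum_{w\in B_\ell}\tau_w h-\mathcal H(\rho^\ell)\big|\big)$, which tends to $0$ as $N\to\infty$ then $\ell\to\infty$ by Proposition~\ref{prop:localerg}.

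The delicate part is the carré-du-champ. Since $\nabla_{v,v+1}\rho^\ell_x$ is nonzero only for the two boundary bonds of $B_\ell(x)$, the product $\Gamma(\rho^\ell_z,\rho^\ell_x)$ vanishes unless $z\in\{x,x\pm(2\ell+1)\}$, and a crude bound would leave a contribution of order $N/\ell^2$ that \emph{diverges} after integration in $s$. The resolution is an exact cancellation: on a shared boundary bond $v$ one has $\nabla_{v,v+1}\rho^\ell_{x\pm(2\ell+1)}=-\nabla_{v,v+1}\rho^\ell_x$, so the diagonal and off-diagonal contributions combine into
\[
\text{carré-du-champ}=\frac2{(2\ell+1)^2}\big(H^N_s(0)-H^N_s(\tfrac{2\ell+1}N)\big)\sum_{v\in\T_N}c_{v,v+1}(\eta_{v+1}-\eta_v)^2.
\]
Bounding the bond sum by $N$ and splitting the $s$-integral at $s_\ast=\ell^2/N^2$ — using $H^N_s(0)\le C/\sqrt s$ for $s\le s_\ast$ and the smoothness estimate $H^N_s(0)-H^N_s(\frac{2\ell+1}N)\le C\ell^2/(N^2 s^{3/2})$, read off from \eqref{eq:HtN}, for $s\ge s_\ast$ — each regime contributes $O(1/\ell)$, so the carré-du-champ vanishes as $\ell\to\infty$ uniformly in $\Theta$ and $\theta$. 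Combining the two estimates proves the lemma.

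The main obstacle is precisely this last step: the naive estimate of the carré-du-champ overshoots by a factor $N$, and the argument closes only thanks to the algebraic cancellation that replaces the coefficient $H^N_s(0)$ by the much smaller second difference $H^N_s(0)-H^N_s(\frac{2\ell+1}N)$, after which the small-$s$ and large-$s$ regimes have to be balanced through the cutoff $s_\ast\sim\ell^2/N^2$.
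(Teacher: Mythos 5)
Your proof is correct and follows essentially the same route as the paper's: the exact identity of Lemma \ref{lem:correlations} combined with Lemma \ref{lem:equivformula} encodes precisely your main-term/carr\'e-du-champ decomposition (the paper's correction coefficient $\overline{\big(\Delta^N H_s^N\big)}^{N,\ell}(0)$ equals $-\tfrac{2N^2}{(2\ell+1)^2}\big[H^N_s(0)-H^N_s(\tfrac{2\ell+1}{N})\big]$, which is exactly your boundary-bond cancellation), and the main term is treated identically via $\int_{\Theta/N^2}^{\theta}\Delta^N H^N_s\,ds=H^N_\theta-H^N_{\Theta/N^2}$, positivity and unit mass of $H^N_\tau$, and the one-block estimate of Proposition \ref{prop:localerg}. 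The only (harmless) divergence is in the final estimate of the correction term: the paper bounds its coefficient crudely by $\|\Delta^N H^N_s\|_\infty\le 2\sum_k\lambda_k^N e^{-s\lambda_k^N}$, so that contribution is $\mc O(T/\sqrt{\Theta})$ and vanishes only in the limit $\Theta\to\infty$, whereas your heat-kernel regularity bound with the cutoff $s_*=\ell^2/N^2$ gives $\mc O(1/\ell)$ uniformly in $\Theta$ and $\theta$ --- both estimates close the argument within the iterated limit.
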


Since we now have all the ingredients to do so, before turning to the proof of Lemma \ref{lem:limint} and Lemma \ref{lem:limRkl}, we conclude the proof of  Lemma  \ref{lem:eqpi}.

\begin{proof}[Proof of Lemma \ref{lem:eqpi}]
Since by definition $\Delta^N H_s^{N}=\partial_s H_s^{N}$, we have
\[
\int_{\Theta/N^2}^\theta \mathcal{T}_s^{N,\ell} ds = \mathcal V_{\theta}^{N,\ell} - \mathcal V_{\Theta/N^2}^{N,\ell}
,\]
where $\mathcal V_{\tau}^{N,\ell}$ was defined in \eqref{eq:DefV}. As a consequence of  Lemmas \ref{lem:limint} and  \ref{lem:limRkl}, we have
\[
\lim_{\theta\to 0}\lim_{\Theta\to\infty}\lim_{\ell\to\infty}\lim_{N\to\infty} 
\int_{\Theta/N^2}^{\theta} \mathcal{T}^{N,\ell}_s  ds = 0.
\]
which proves 
\begin{equation}
\label{eq:limitequality}
\lim_{\Theta\to \infty}\lim_{\ell\to\infty}\lim_{N\to\infty} 
\mathcal{V}_{\Theta/N^2}^{N,\ell}=\lim_{\theta\to 0}\lim_{\ell\to\infty}\lim_{N\to\infty} \mathcal V_{\theta}^{N,\ell}
.\end{equation}
In particular, Lemma \ref{lem:eqpi} follows  from Lemmas \ref{lem:lim1} and \ref{lem:lim2}.\end{proof}

It remains to prove Lemma \ref{lem:limint} and Lemma \ref{lem:limRkl}.

\begin{proof}[Proof of Lemma \ref{lem:limint}]
This is immediate: 
since $H_s^N$ is non-negative, for any $t$  we have 
\[\E_{\mu^N}\cro{\langle m_t^{N,\ell},m_t^{N,\ell}*H_s^N\rangle}\leq \bigg(\frac{1}{N}\sum_{y\in\T_N}H_s^N\big(\tfrac y N \big)\bigg)\E_{\mu^N}\Big[\sup_{x\in\bb T_N}\big(\rho^\ell_x(t)\big)^2\Big]\leq 1.\]
In particular, $\mathcal{R}^{N,\ell}_s$ is uniformly bounded. Since we integrate it over a time segment $s\in [\Theta/N^2, \theta]$ whose length vanishes in the limit, this concludes the proof.
\end{proof}

We now turn to the proof of Lemma \ref{lem:limRkl}, for which we need the following two  technical lemmas.
\begin{lemma}
\label{lem:correlations}
For any function $\xi^N$ on $\frac1N\bb T_N\subset \bb T$, we have the identity
\begin{align*}
N^2\gene_N\big(\langle m^N,m^N*\xi^N\rangle\big) = & \frac{1}{N^2}\sum_{x,y\in\T_N}\Delta^N \xi^N\big(\tfrac y N\big)\pa{\eta_{x+y}+\eta_{x-y}}\tau_x h(\eta) \\
&+\frac{\Delta^N \xi^N(0)}{N^2}\sum_{x\in\T_N}\pa{\eta_{x+1}+\eta_{x-1}-2\eta_x} \tau_x h(\eta). \vphantom{\Bigg(}
\end{align*} 
\end{lemma}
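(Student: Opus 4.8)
The plan is to expand the quadratic functional, apply the elementary product rule for the generator pair by pair, and then reorganize everything by discrete summation by parts on the torus. First I would write, using the definition of the convolution and of the empirical measure,
\[
\langle m^N,m^N*\xi^N\rangle=\frac1{N^2}\sum_{x,z\in\T_N}\xi^N\pa{\tfrac{x-z}N}\eta_x\eta_z,
\]
and apply, for every ordered pair $(x,z)$ \emph{including the diagonal} $x=z$, the carré du champ decomposition
\[
\gene_N(\eta_x\eta_z)=\eta_z\,\gene_N\eta_x+\eta_x\,\gene_N\eta_z+\Gamma(\eta_x,\eta_z),
\]
where $\Gamma(f,g)=\sum_{w\in\T_N}c_{w,w+1}(\eta)\pa{f(\eta^{w,w+1})-f(\eta)}\pa{g(\eta^{w,w+1})-g(\eta)}$. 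This holds verbatim for any two functions, so it may be used uniformly over all pairs, the only subtlety being that diagonal terms are genuinely $\eta_x$ and not true products.

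For the two drift terms I would use the identity $\gene_N\eta_x=\tau_{x+1}h+\tau_{x-1}h-2\tau_x h$ (recalled in the Remark following Definition \ref{Def:PDEweak}), and perform a discrete summation by parts transferring the lattice Laplacian from $\tau_\cdot h$ onto $\xi^N$. Since the underlying set is the torus $\T_N$ there are no boundary terms, and by the very definition \eqref{eq:DefDeltaN} of $\Delta^N$ the combination $\xi^N(\tfrac{x-z+1}N)+\xi^N(\tfrac{x-z-1}N)-2\xi^N(\tfrac{x-z}N)$ equals $\frac1{N^2}\Delta^N\xi^N(\tfrac{x-z}N)$. After the change of variables $y=x-z$ in the first contribution and $y=z-x$ in the second (symmetric) one, both pieces collect onto the base point $\tau_x h$ and add up to
\[
\frac1{N^2}\sum_{x,y\in\T_N}\Delta^N\xi^N\big(\tfrac yN\big)\pa{\eta_{x+y}+\eta_{x-y}}\tau_x h,
\]
which is exactly the first term of the claim.

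For the carré du champ term the key observation is that $\Gamma(\eta_x,\eta_z)$ vanishes unless $|x-z|\le1$. I would rewrite each value using $c_{x,x+1}(\eta)(\eta_{x+1}-\eta_x)=\tau_{x+1}h-\tau_x h$ (equivalently $j_{x,x+1}=\tau_x h-\tau_{x+1}h$): the nearest-neighbor terms give $\Gamma(\eta_x,\eta_{x\pm1})=-c(\eta)(\eta_x-\eta_{x\pm1})^2$, and the diagonal gives $\Gamma(\eta_x,\eta_x)=c_{x,x+1}(\eta)(\eta_{x+1}-\eta_x)^2+c_{x-1,x}(\eta)(\eta_{x-1}-\eta_x)^2$. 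After reindexing, each of these sums is proportional to $\sum_{x\in\T_N}\pa{\eta_{x+1}+\eta_{x-1}-2\eta_x}\tau_x h$: the off-diagonal pairs $z=x\pm1$ carry the prefactor $\xi^N(\tfrac1N)+\xi^N(-\tfrac1N)$, while the diagonal pairs carry $-2\xi^N(0)$. These assemble precisely into $\xi^N(\tfrac1N)+\xi^N(-\tfrac1N)-2\xi^N(0)=\Delta^N\xi^N(0)/N^2$, producing the second term of the statement.

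The main obstacle is purely bookkeeping at the diagonal. Because $\eta_x^2=\eta_x$, the pairs $x=z$ are not honest products, and it is exactly their carré du champ contribution that supplies the $-2\xi^N(0)$ needed to convert the bare $\xi^N(\tfrac1N)+\xi^N(-\tfrac1N)$ coming from the nearest-neighbor pairs into the discrete Laplacian $\Delta^N\xi^N(0)$. Omitting or mistreating the diagonal would yield the wrong prefactor, so the only genuine care required is to handle all pairs on the same footing and to track the $|x-z|\le1$ contributions exactly; everything else is routine summation by parts on $\T_N$.
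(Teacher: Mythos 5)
Your proposal is correct and amounts to the same elementary computation as the paper's Appendix B: both rest on the gradient identities $\gene_N\eta_x=\tau_{x+1}h+\tau_{x-1}h-2\tau_xh$ and $c_{x,x+1}(\eta)(\eta_{x+1}-\eta_x)=\tau_{x+1}h-\tau_xh$, followed by summations by parts on the torus, with the only visible difference being bookkeeping at short range. Where you apply the carr\'e du champ decomposition uniformly to every pair (so that the diagonal drift lands in the $y=0$ part of the first term and the diagonal $\Gamma$ supplies the $-2\xi^N(0)$ in the second, the two pieces cancelling), the paper instead discards the diagonal at the outset via conservation of mass, $\sum_x\gene_N\eta_x^2=\gene_N\big(\sum_x\eta_x\big)=0$, and computes the $|x-z|=1$ terms directly; both routes reassemble into the same formula.
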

Lemma \ref{lem:correlations} follows from rather elementary computation, we give its proof for the sake of completeness in Appendix \ref{app:tech}.

\begin{lemma}[Equivalent formula for $\mathcal{R}_s^{N,\ell}$] \label{lem:equivformula}
Let us introduce, for any $1\leq \ell\leq N$ and $x \in \T_N$, and any function $\xi^N$ defined on $\frac1N \T_N$, the average function
\begin{equation}
\overline{\xi^N}^{N,\ell}\big(\tfrac x N \big)=\frac{1}{(2\ell+1)^2}\sum_{y_1, y_2\in B_\ell} \xi^N\big(\tfrac{x+y_1+y_2}{N}\big),  
\end{equation}
and recall from \eqref{eq:DefmN} the definition of the empirical measure $m^N_t$. 
Then, seeing $ m^N_t$ as a measure on $\frac1N\bb T_N$, we have
\begin{align}
\label{eq:DefRKl0}
\mathcal{R}^{N,\ell}_s & = \E_{\mu^N}\cro{\langle m_T^N,m_T^N*\overline{H_s^N}^{N,\ell}\rangle-\langle m_0^N,m_0^N*\overline{H_s^N}^{N,\ell}\rangle} \\
& = \E_{\mu^N}\Bigg[\int_{0}^T\bigg(\frac{1}{N^2}\sum_{y\in\T_N}\overline{\big(\Delta^N  H_s^N\big)}^{N,\ell}\big(\tfrac y N \big) \sum_{x\in\T_N}\pa{\eta_{x+y}+\eta_{x-y}}(t)\tau_x h(\eta(t))\nonumber\\
&  \quad \quad \quad \quad \quad \quad \quad  +\frac{\overline{\big(\Delta^N  H_s^N\big)}^{N,\ell}(0)}{N^2}\sum_{x\in\T_N}\pa{\eta_{x+1}+\eta_{x-1}-2\eta_x}(t) \tau_x h(\eta(t))\bigg)dt\Bigg]. \label{eq:DefRKl}
\end{align}
\end{lemma}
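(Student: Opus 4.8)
The plan is to prove the two asserted identities in turn: first the purely combinatorial rewriting in \eqref{eq:DefRKl0}, which re-expresses $\mathcal{R}^{N,\ell}_s$ in terms of the \emph{unaveraged} empirical measure $m^N$ tested against the averaged kernel $\overline{H_s^N}^{N,\ell}$, and then the time-integrated form \eqref{eq:DefRKl} obtained via Dynkin's formula together with Lemma \ref{lem:correlations}.

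For the first identity, I would simply expand the convolutions. Writing out Definition \ref{def:Youngmeasure} and \eqref{eq:Defrhoell}, for any $t$ one has
\[
\langle m_t^{N,\ell}, m_t^{N,\ell}*H_s^N\rangle = \frac{1}{N^2}\sum_{w,x\in\T_N}\rho^\ell_w(t)\,\rho^\ell_x(t)\; H_s^N\big(\tfrac{w-x}{N}\big).
\]
Substituting $\rho^\ell_w=\frac{1}{2\ell+1}\sum_{z_1\in B_\ell}\eta_{w+z_1}$ and $\rho^\ell_x=\frac{1}{2\ell+1}\sum_{z_2\in B_\ell}\eta_{x+z_2}$ and reindexing by $a=w+z_1$, $b=x+z_2$ (so that $w-x=a-b-z_1+z_2$) turns the expression into
\[
\frac{1}{N^2(2\ell+1)^2}\sum_{a,b\in\T_N}\eta_a\eta_b\sum_{z_1,z_2\in B_\ell}H_s^N\big(\tfrac{a-b-z_1+z_2}{N}\big).
\]
Using the symmetry $B_\ell=-B_\ell$ to set $y_1=-z_1$ and $y_2=z_2$, the inner double sum is exactly $(2\ell+1)^2\,\overline{H_s^N}^{N,\ell}(\tfrac{a-b}{N})$, whence the whole quantity equals $\langle m_t^N, m_t^N*\overline{H_s^N}^{N,\ell}\rangle$. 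Evaluating at $t\in\{0,T\}$ and subtracting gives \eqref{eq:DefRKl0}.

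For the second identity, I would apply Dynkin's formula to the configuration function $\eta\mapsto\langle m^N, m^N*\overline{H_s^N}^{N,\ell}\rangle$ (which, for fixed $s,\ell,N$, depends on $\eta$ alone), recalling that the process is driven by the accelerated generator $N^2\gene_N$. Since the accompanying martingale has zero expectation, this yields
\[
\mathcal{R}^{N,\ell}_s=\E_{\mu^N}\cro{\int_0^T N^2\gene_N\big(\langle m^N, m^N*\overline{H_s^N}^{N,\ell}\rangle\big)(\eta(t))\,dt}.
\]
Then I would invoke Lemma \ref{lem:correlations} with the test function $\xi^N=\overline{H_s^N}^{N,\ell}$. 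The one point to verify is that the discrete Laplacian commutes with the averaging operation, which is immediate since both are finite linear combinations of translations: $\Delta^N\overline{H_s^N}^{N,\ell}=\overline{\big(\Delta^N H_s^N\big)}^{N,\ell}$. Substituting this into the output of Lemma \ref{lem:correlations} produces precisely the two sums in \eqref{eq:DefRKl}.

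This lemma is essentially bookkeeping and I do not expect a genuine analytic obstacle. The only step requiring care is the reindexing in the first identity, specifically the appeal to $B_\ell=-B_\ell$ to fold the sign of $z_1$ so that the combination $-z_1+z_2$ becomes the symmetric double average $y_1+y_2$ defining $\overline{\cdot}^{N,\ell}$; one must also keep the parameter $s$ (hence the kernel) frozen throughout so that Dynkin's formula applies to a bona fide function of $\eta$, and confirm the martingale contribution vanishes in expectation.
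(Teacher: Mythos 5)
Your proposal is correct and follows essentially the same route as the paper, which also establishes \eqref{eq:DefRKl0} by a summation by parts (your convolution reindexing) and \eqref{eq:DefRKl} by Dynkin's formula combined with Lemma \ref{lem:correlations} applied to $\xi^N=\overline{H_s^N}^{N,\ell}$. Your verification that $\Delta^N$ commutes with the averaging $\overline{\,\cdot\,}^{N,\ell}$ is the right (and only) point needing care in the second step.
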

\begin{proof}[Proof of Lemma \ref{lem:equivformula}]
The first identity \eqref{eq:DefRKl0} is an easy integration by parts. The second one is obtained by, first, writing Dynkin's formula, and then using Lemma \ref{lem:correlations}.
\end{proof}

We are now ready to prove Lemma \ref{lem:limRkl}.
\begin{proof}[Proof of Lemma \ref{lem:limRkl}]
From \eqref{eq:HtN} one can easily obtain \[\big\|\overline{\big(\Delta^N H_s^N\big)}^{N,\ell}\big\|_\infty\leq\|\Delta^N H_s^N\|_\infty\leq 2\sum_{k=0}^{ N-1}\lambda_k^Ne^{-s\lambda_k^N}.\] One first estimates the contribution to $\int_{\Theta/N^2}^{\theta} \mathcal{R}^{N,\ell}_s ds$ of the second term in the right hand side of \eqref{eq:DefRKl}: this contribution can be crudely bounded from above for any $\ell\leq N$ by 
\begin{multline*}
\Bigg|\int_{\Theta/N^2}^{\theta}\E_{\mu^N}\Bigg[\int_{0}^T\bigg(\frac{\overline{\big(\Delta^N  H_s^N\big)}^{N,\ell}(0)}{N^2}\sum_{x\in\T_N}\pa{\eta_{x+1}+\eta_{x-1}-2\eta_x}(t) \tau_x h(\eta(t))\bigg)dt\Bigg]ds\Bigg|\\
\leq \frac{4T}{N}\sum_{k=0}^{N-1}e^{-\Theta\lambda^N_kN^{-2}}\xrightarrow[N\to\infty]{}4T\int_0^1e^{-4\Theta\sin^2(u\pi )}du,
\end{multline*}
and therefore vanishes as $N\to\infty$ then  $\Theta\to \infty$.

Furthermore, by integrations by parts, one can rewrite  the quantity appearing in the first term of \eqref{eq:DefRKl}, namely
\[\sum_{y\in\T_N}\overline{\big(\Delta^N H_s^N\big)}^{N,\ell}\big(\tfrac y N\big)\sum_{x\in\T_N}\pa{\eta_{x+y}+\eta_{x-y}}(t)\tau_xh(\eta(t)),\] 
as 
\begin{equation*}
\sum_{y\in\T_N}\Delta^N H_s^N\big(\tfrac y N \big)\sum_{x\in\T_N}\pa{\rho_{x+y}^\ell(t)+\rho^\ell_{x-y}(t)}\frac{1}{2\ell+1}\sum_{z\in B_\ell}\tau_{x+z}h(\eta(t)).
\end{equation*} 
In order to prove Lemma \ref{lem:limRkl}, it is therefore enough to show that 
\begin{multline}
\frac{1}{N^2}\E_{\mu^N}\Bigg[\int_{0}^T\sum_{x\in \bb T_N}\sum_{y \in \T_N}\bigg(\int_{\Theta/N^2}^{\theta}\Delta^N H_s^{N}\big(\tfrac y N \big)ds\bigg)\pa{\rho_{x+y}^\ell(t)+\rho^\ell_{x-y}(t)}\\
\times\bigg(\frac{1}{2\ell+1}\sum_{z\in B_\ell}\tau_{x+z}h(\eta(t))-\mathcal{H}(\rho^\ell_t(x))\bigg)dt\Bigg] \label{eq:qua}
\end{multline}
vanishes in the limit of the statement. As before, we first rewrite the integral in $s$ as 
\[\int_{\Theta/N^2}^{\theta}\Delta^N H_s^{N}\big(\tfrac y N \big)ds= H_{\theta}^{N}\big(\tfrac y N \big)-H_{\Theta/N^2}^{N}\big(\tfrac y N \big).\] The absolute value of \eqref{eq:qua} is therefore bounded from above by the sum of four terms
$A^+_{\theta}+A^-_{\theta}+A^+_{\Theta/N^2}+A^-_{\Theta/N^2}$, where $A^{\pm}_{\tau}$ is given by the following expression
\[\E_{\mu^N}\Bigg[\frac{1}{N^2}\int_{0}^T\sum_{x\in \bb T_N}\sum_{y \in\T_N} H_{\tau}^{N}\big(\tfrac y N \big)\rho^{\ell}_{x\pm y}(t)
\bigg|\frac{1}{2\ell+1}\sum_{z\in B_\ell}\tau_{x+z}h(\eta(t))-\mathcal{H}(\rho^\ell_x(t))\bigg|dt\Bigg],\] 
which is bounded uniformly in $\tau$ from above by 
\[T\bar\mu_T^N\Bigg[\bigg|\frac{1}{2\ell+1}\sum_{z\in B_\ell}\tau_{z}h(\eta)-\mathcal{H}(\rho^\ell_0)\bigg|\Bigg],\]
since $\rho^\ell_{x\pm y}\in[0,1]$ and $N^{-1}\sum_{y\in\T_N} H^N_{\tau}(\frac y N)=1$. Proposition \ref{prop:localerg}, together with \eqref{eq:pirhoh}, then conclude the proof.
\end{proof}

\section{Proof of Theorem \ref{thm:fronts}: Creation of the microscopic interfaces}
\label{sec:HittingTime}

\subsection{Creation of the microscopic fronts}
By definition of the initial measure for our process, the supercritical macroscopic phase $\{\rho^{\rm ini}\geq \frac12\}$ can contain neighboring empty sites at the microscopic level, whereas the subcritical macroscopic phase $\{\rho^{\rm ini}\leq \frac12\}$ can contain non-frozen particles (\textit{i.e.}~neighboring particles). Because of the indirect way it proves the hydrodynamic limit using Young's measures, Funaki's scheme (which we have adapted here) does not provide any information on the microscopic structure of  the free boundary problem. 

In this section we prove Theorem \ref{thm:fronts}, \textit{i.e.}~that under reasonable assumptions on the initial profile $\rho^{\rm ini}$, after a macroscopic time of order $t_N=o(1)$, the microscopic structure of the configuration matches the macroscopic one.

Recall that we now make the following assumptions on $\rho^{\rm ini}$:
\begin{align}
\label{ass:H1}
\tag{H1-T1}
&(\rho^{\rm ini})^{-1}([0,\tfrac12])=[0,u_*], \quad \rho^{\rm ini}<1,
\\
\label{ass:H2}\tag{H2}
&\rho^{\rm ini}\in C^2(\bb T)\quad \mbox{ and }\quad \partial_u\rho^{\rm ini}(0),\;\partial_u\rho^{\rm ini}(u_*)\neq 0 .
\end{align}
We emphasize once again that the assumptions on the number of  critical points (only two), and on the initial density which never hits $1$, are  purely for the simplicity of the presentation, and are not required for the proof. Indeed, most of the work of this section concerns the study of the critical interfaces, since the supercritical region (in which the density is larger than, and bounded away from the critical value $\rho_c=1/2$) has already been thoroughly studied in \cite{BESS}. In particular, the technical issues specific to the case where the density hits $1$ are solved therein. In order not to burden this section with analogous results we assume \eqref{ass:H1}.

\subsection{Mapping with the zero-range process} To prove  Theorem \ref{thm:fronts}, we will once again exploit the mapping with the zero-range process. Given a configuration $\eta\in \Sigma_N$, define $K(\eta)=N- \sum_{x\in \T_N}\eta_x$ the number of empty sites in the exclusion configuration $\eta$, and, for $K<N$, define 
\[\Sigma^0_{N,K}=\big\{\eta\in  \Sigma_N\; ;  \; \eta_0=0\mbox{ and }K(\eta)=K\big\}.\]   
We define on $\Sigma^0_{N,K}$ the finite volume counterpart  $ \Pi_{N,K}$ of \eqref{eq:DefPi}, namely
\begin{equation}
\label{eq:DefPiNK}
\begin{array}{cccc}
\Pi_{N,K}:&\Sigma^0_{N,K} &\to&\N^{\T_K} \\
&\eta&\mapsto&\omega^\eta 
\end{array},
\end{equation}
where, as before, for any $k\in \T_{K(\eta)}$, $\omega^\eta _k$ is the number of particles between the $k$--th and $(k+1)$--th empty site  (to the right of 0) in $\eta$.

Given a trajectory $\eta(t)$ of the exclusion process, define $K_0=K(\eta(0))$ the initial number of empty sites in the configuration.
Mark the first empty site in $\eta(0)$ to the right of site $0$ if it exists. We keep track of the motion of this empty site and denote $X(t)$ its position\footnote{{If $X(t^-)=x$ and a particle jumps from $x\pm 1$ to $x$ at time $t$, then $X(t)=x\pm 1$.}} at time $t$. We then denote by
\[\widetilde{\eta}(t)=\tau_{X(t)}\eta(t)\in \Sigma^0_{N,K_0} \] 
the exclusion configuration seen from the marked  empty site.

We now denote by $\widetilde{\omega}(t)=\omega^{\widetilde{\eta}(t)}=\Pi_{N, K_0}(\widetilde{\eta}(t))$ the associated zero-range configuration. If $\eta(0)\equiv \mathbf{1}$ (the constant configuration with particles at each site), which happens with vanishing probability, then we let by convention $\widetilde\omega(0)= N\in\N^{\T_1}$, the zero-range configuration with only one site and $N$ particles on this site.

 Then, as detailed in \cite[Section 3]{BESS}, $\{\widetilde{\omega}(t)\}_{t\geqslant 0}$ is a Markov process, initially in the state $\widetilde{\omega}(0)=\omega^{\widetilde{\eta}(0)}$, and driven by the generator $N^2 \mathcal{L}^{\mathrm{ZR}}_{K_0}$ (recall also \eqref{eq:ZRgen} for the infinite volume version), where
\begin{equation}
\label{eq:ZRgenfin}
\mathcal{L}^{\mathrm{ZR}}_{K}f(\omega):=\sum_{x\in\T_{K}}\sum_{\delta=\pm1}{\bf 1}_{\{\omega_x\geq 2\}}\big(f(\omega^{x,x+\delta})-f(\omega)\big).
\end{equation}
As already noted, for any $\alpha\geq 1$ one can define an equilibrium (grand canonical) distribution  $\nu_{ \alpha}^K$ of the zero-range generator $\mathcal{L}_K^{\rm ZR}$ on $\T_K$, as the geometric product homogeneous measure
\begin{equation}
\label{eq:Defnurhobar}
\nu_{\alpha}^K(\omega_0=p)={\bf 1}_{\{p\in\N, p\geq 1\}}\frac{1}{\alpha}\Big(1-\frac{1}{\alpha}\Big)^{p-1},
\end{equation}
which satisfies the detailed balance condition w.r.t.~$\mathcal{L}_K^{{\rm ZR}}$. We then denote by $\nu_{\alpha}^*$ the product measure on the set $\N^\N$ of semi infinite zero-range configurations with marginals given by \eqref{eq:Defnurhobar}.

 Given an integer $K$ and an initial zero-range configuration $\bar\omega\in\N^{\T_K}$, we denote by $ \mathcal{Q}_{K, \bar\omega}$  the probability distribution on the path space $\mathcal{D}([0,T],\N^{\T_K})$ of the zero-range process started from a \emph{fixed} configuration $\bar\omega$, and driven by the \emph{non-accelerated} zero-range generator $\mathcal{L}^{\mathrm{ZR}}_{K}$. 

\begin{remark}If $\eta(0)$ is distributed according to the initial measure $\mu^N$ fitting $\rho^{\rm ini}$, then the distribution of $\tilde\omega(0)=\omega^{\tilde\eta(0)}$ can also be associated with a profile in the following way. For $u\in\T$, define 
\begin{equation} 
\label{eq:Defv1}
v(u)=\int_{0}^u(1-\rho^{\rm ini}(u'))du',\qquad \bar v=v(1)>0, \end{equation} 
and $\alpha^{\rm ini}:[0,\bar v)\rightarrow\R_+$ such that, for any $v=v(u)\in[0,\bar v)$, 
\begin{equation}
\alpha^{\rm ini}(v)=\frac{\rho^{\rm ini}}{1-\rho^{\rm ini}}(u).\label{eq:Defv2}
\end{equation}
Then one could prove that, for all $\delta>0$ and smooth test function $\phi$, \begin{equation}
\P_{\mu_N}\bigg(\bigg|\frac{\bar v}{K_0}\sum_{x=1}^{K_0}\phi\Big(\frac{x\bar v}{K_0}\Big)\widetilde\omega_x(0)-\int_0^{\bar v}\phi(v)\alpha^{\rm ini}(v)dv\bigg|>\delta\bigg)\xrightarrow[N\to\infty]{}0.
\end{equation}
We will not need this result, but a weaker version can be found in Appendix \ref{ss:mappings}.
 \end{remark}

Note that by assumption  \eqref{ass:technical}, we have $\rho^{\rm ini}<1$, therefore $\alpha^{\rm ini}$ is well defined. Under this mapping, if \eqref{ass:technical} holds, the two critical points $0$ and $u_*$ are mapped respectively to $0$ and $v_*:=v(u_*)\in(0,\bar v]$, which satisfy $(\alpha^{\rm ini})^{-1}([0,1])=[0,v_*]$.

The main advantage of working with the zero-range process is the following \emph{monotonicity property} (see \textit{e.g.}~\cite[Chapter 2, Section 5]{KL}). 
Consider two trajectories $\{\omega(t)\}_{t\in [0,T]}$ and $\{\omega'(t)\}_{t\in [0,T]}$ 
driven by the generator $\gene^{\rm ZR}_K$, 
respectively started from two configurations $\bar\omega\leq{\bar\omega}'$. Then, one can couple both processes $\omega$ and $\omega'$ in such 
a way that at any positive time $t$, $\omega(t)\leq \omega'(t)$. 
In particular, given an event $E\subset \N^{\T_K}$ increasing in the configuration, and if $\bar \omega\leq \bar \omega'$, for any $t\geq 0$,
\begin{equation}\label{eq:monotonicity}\mathcal Q_{K, \bar\omega}(\omega(t)\in E)\;\leq \;\mathcal Q_{K, \bar\omega'}(\omega'(t)\in E).\end{equation}

\subsection{Typical zero-range configurations}
\label{sec:regZR}

In this section we define a set $\mathcal{T}_K$ of \emph{typical zero-range configurations}. 
Define 
$\ell_K=K^{\frac 34}$, and denote
\begin{equation}
\label{eq:DefFKEK}
\mathbf{B}_K:=\left\{\ell_K,\dots, k_*- \ell_K\right\},\quad\mbox{ and }\quad \mathbf{A}_K=\T_K\setminus \mathbf{B}_K,
\end{equation}
where $k_*:=\left\lfloor\frac{Kv_*}{\bar v}\right\rfloor$ is the microscopic site corresponding to the macroscopic critical point $v_*$. The set $\mathbf{B}_K$ is the set of sites in the subcritical phase at distance at least $\ell_K$ of the macroscopic critical points $\{0,v_*\}$. 
Note that for any fixed $K$, the sets $\mathbf{A}_K$ and $\mathbf{B}_K$ only depend on the initial macroscopic profile $\rho^{\rm ini}$.

Given a zero-range configuration $\omega$ and a set $\Lambda$, we denote by
\begin{equation}\label{eq:alphaLambda}\alpha_{\Lambda}(\omega)=\frac{1}{|\Lambda|}\sum_{x\in \Lambda}\omega_x\end{equation}
the empirical density of $ \omega$ in the set $\Lambda$. Define 
\begin{align}
\label{eq:DefC*}
c_*&=4\bar v \min\big\{-\partial_u \rho^{\rm ini}(0)\; ; \; \partial_u \rho^{\rm ini}(u_*)\big\} \\ & =\bar v\min\big\{-\partial_v \alpha^{\rm ini}(0) \; ; \; \partial_v \alpha^{\rm ini}(v_*)\big\}>0, \notag
 \end{align}
 and  introduce 
\[\alpha_K=1+c_*\frac{\ell_K}{K}>1\quad  \mbox{ and  }\quad \Lambda^+_K=\{1,\dots,10\ell_K\}.\]
Throughout, we will not burden the notations and write for example $\alpha_K\ell_K$ instead of $\lfloor \alpha_K\ell_K\rfloor$. We further define
\begin{equation}
\label{eq:DefcK}
c_{K}(\omega)=\sum_{x\in \Lambda^+_K}\omega_x(x-5\ell_K),
\end{equation}
which sums the arithmetic distances between particles in $\Lambda_K^+$ and the center of $\Lambda_K^+$.
We now introduce the subset $\Omega^+_K \subset \N^{\T_K}$ given by
\[\Omega^+_K=\bigg\{\omega\in\N^{\T_K}\; ; \;\alpha_{\Lambda_K^+}(\omega)=\alpha_K, \quad c_K(\omega)\leq 0, \mbox{ and }\ \forall\; x\notin\Lambda_K^+,\; \omega_x=0\bigg\}.\]
Note that in the last definition we slightly abused our notation, and by $\alpha_{\Lambda_K^+}(\omega)=\alpha_K$, we actually mean that $\sum_{\Lambda_K^+}\omega_x=\lfloor\alpha_K|\Lambda_K^+|\rfloor$.

We denote  by $\Omega^-_K$ the set of configurations such that the configuration $\omega'_{x}=\omega_{-x}$ (obtained by symmetry w.r.t.~the origin) is in $ \Omega^+_K$. In other words, configurations in $\Omega^\pm_K$ have slightly more than one particle per site in a box of size $10\ell_K$ to the left/right of the origin, and those particles, on average, are closer to the origin than to the other extremity of the box.
\begin{definition}
\label{def:regZR}
We call a configuration $\omega\in \N^{\T_K}$ \emph{typical} if it meets the  following two conditions : 
\begin{enumerate}[(i)]
\item For any $x\in \mathbf{B}_K$, and any connected set $\Lambda\subset \mathbf{B}_K$ containing $x$ such that $|\Lambda|\geq \ell_K$, we have $\alpha_{\Lambda}(\omega)\leq 1.$
\medskip

\item For any $x\in \mathbf{A}_K$, there exists $\omega'\in  \Omega^+_K\cup  \Omega^-_K$  (depending on $x$) such that $ \omega\geq \tau_{-x} \omega'.$
\end{enumerate}
We denote by $\mathcal{T}_K \subset \N^{\T_K}$  the set of typical configurations.
\end{definition}
The first condition states that no large subcritical box has an abnormally large density. The second one states that for any site $x$ close enough to the supercritical phase, one can always find a neighboring large box $x+\Lambda_K$, {containing at least $\alpha_K>1$ particle per site on average. In $\omega'$, we keep only the particles closest to $x$, which will ensure that 
 $c_K(\omega')\leq 0$ w.h.p. Then, w.h.p, at least one of those excess particles will  eventually exit the box through site $x$.}

\begin{lemma}
\label{lem:Typconf}
Recall that $K_0(\eta)$ is the number of empty sites in the exclusion configuration $\eta(0)$, which is distributed according to $\mu^N$. We have
\begin{equation*}
\lim_{N\to\infty} \bb P_{\mu_N}\big(K_0\notin \mathbf{I}_N\quad \text{\emph{or}} \quad \widetilde{\omega}(0)\not\in \mathcal{T}_{K_0}\big)=0, 
\end{equation*}
where $
\mathbf{I}_{N}=\left\{\bar v N-\log^2N,\dots, \bar v N+\log^2N\right\}.
$
\end{lemma}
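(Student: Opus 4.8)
The proof splits according to the two events. For the first, note that $K_0=\sum_{x\in\T_N}(1-\eta_x(0))$ is a sum of independent Bernoulli variables of mean $\sum_x(1-\rho^{\rm ini}(x/N))=\bar vN+\mc O(1)$ (a Riemann sum approximation using $\rho^{\rm ini}\in C^2$ and \eqref{eq:Defv1}), so a concentration inequality for sums of independent bounded variables confines $K_0$ to $\mathbf I_N$ with high probability, i.e.\ $\P_{\mu^N}(K_0\notin\mathbf I_N)\to0$. From now on I work on the event $\{K_0\in\mathbf I_N\}$, write $K:=K_0$, and bound $\P(\widetilde\omega(0)\notin\mathcal T_K)$. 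Observe that $\widetilde\eta(0)$ is $\eta(0)$ shifted by the position $X(0)$ of the first marked empty site; since $\rho^{\rm ini}(0)=\tfrac12$ this shift is $\mc O(1)$ with high probability, hence negligible at the scale $\ell_K=K^{3/4}$.

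The plan is to reduce both conditions of Definition~\ref{def:regZR} to deviation estimates for $\eta(0)$, whose coordinates are \emph{independent and bounded}. The bridge is the gap identity $\sum_{k=a}^{b}\widetilde\omega_k=(x_{b+1}-x_a)-|\Lambda|$, where $x_j$ is the $j$-th empty site and $\Lambda=\{a,\dots,b\}$: thus $\alpha_\Lambda(\widetilde\omega)\le1$ is equivalent to the exclusion interval $[x_a,x_{b+1}]$ carrying particle density at most $\tfrac12$, and the mass of $\widetilde\omega$ over a box is a count of particles lying between prescribed empty sites. First, applying Hoeffding to the empty-site counts $\sum_{i\in I}(1-\eta_i)$ over all $\mc O(N^2)$ discrete intervals $I$, followed by a union bound, shows that the $k$-th empty site occupies the exclusion position predicted by the profile up to an error $o(\ell_K)$, uniformly in $k$; this pins the correspondence between zero-range indices and exclusion positions.

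For condition (i), every admissible $\Lambda\subset\mathbf B_K$ lies at zero-range distance $\ge\ell_K$ from the critical sites $0$ and $k_*$, where $\alpha^{\rm ini}=1$. By \eqref{ass:H2} and a Taylor expansion there exists $c>0$ with $\alpha^{\rm ini}\le1-c\,\ell_K/K$ throughout $\mathbf B_K$, equivalently the exclusion particle density stays below $\tfrac12$ by order $K^{-1/4}$. Since this margin dominates the fluctuation scale $\ell_K^{-1/2}=K^{-3/8}$, Hoeffding together with a union bound over the $\mc O(K^2)$ boxes yields $\alpha_\Lambda(\widetilde\omega)\le1$ for all of them simultaneously, with probability tending to $1$. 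For condition (ii), given $x\in\mathbf A_K$ I take the box $\Lambda^+_K$ to its right (resp.\ to its left, via $\Omega^-_K$) on the side facing the supercritical phase; the constant $c_*$ from \eqref{eq:DefC*} is tuned so that the Taylor expansion of $\alpha^{\rm ini}$ forces the box to carry, in expectation, strictly more than $\alpha_K|\Lambda^+_K|$ particles, with enough of them near $x$. Concentration upgrades this to a high-probability statement, after which I let $\omega'$ keep exactly the $\lfloor\alpha_K|\Lambda^+_K|\rfloor$ particles closest to $x$: this gives $\widetilde\omega\ge\tau_{-x}\omega'$ and $\alpha_{\Lambda^+_K}(\omega')=\alpha_K$, and since the kept mass is biased towards $x$ it yields $c_K(\omega')\le0$ (recall \eqref{eq:DefcK}). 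A final union bound over $x\in\mathbf A_K$ closes condition (ii).

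The main obstacle is condition (ii) at the critical points, where $\alpha^{\rm ini}\approx1$ and three scales must be balanced at once: the surplus $\alpha_K-1=c_*\ell_K/K=\Theta(K^{-1/4})$, the box length $10\ell_K$, and the fluctuation $\Theta(K^{-3/8})$. One must check, uniformly in $x\in\mathbf A_K$, both that the box holds at least $\lfloor\alpha_K|\Lambda^+_K|\rfloor$ particles and that the left-biased (towards $x$) selection of exactly that many particles satisfies $c_K\le0$; a short computation reduces the latter to an inequality of the form $c_*\le C\,\bar v\,\partial_v\alpha^{\rm ini}(v_*)$, which is precisely what the numerical factor in the definition \eqref{eq:DefC*} of $c_*$ guarantees. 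The remaining ingredients, namely the control of $K_0$ and the subcritical estimate (i), are comparatively routine applications of concentration for independent bounded variables.
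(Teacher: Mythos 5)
Your architecture is the same as the paper's (Appendix~\ref{app:typical}): a law of large numbers pinning the positions of the empty sites, followed by Taylor expansions of the profile near the critical points to verify conditions \emph{(i)} and \emph{(ii)} of Definition~\ref{def:regZR}, with the sign of $c_K$ in \eqref{eq:DefcK} controlled by the choice of $c_*$ in \eqref{eq:DefC*}. But your very first step fails. You claim that ``a concentration inequality for sums of independent bounded variables confines $K_0$ to $\mathbf{I}_N$ with high probability''. No such inequality exists: under $\mu^N$ the variable $K_0=\sum_{x\in\T_N}(1-\eta_x(0))$ is a sum of independent Bernoulli variables whose total variance is $\sum_x\rho^{\rm ini}(\tfrac xN)(1-\rho^{\rm ini}(\tfrac xN))=\Theta(N)$ (the summands are bounded away from $0$ near the critical points, where $\rho^{\rm ini}\approx\tfrac12$), so $K_0-\E_{\mu^N}[K_0]$ is of order $\sqrt N$ by the central limit theorem and $\P_{\mu^N}(K_0\in\mathbf{I}_N)\to 0$, not $1$. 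Hoeffding gives $\P(|K_0-\E K_0|\geq t)\leq 2e^{-2t^2/N}$, which is vacuous at $t=\log^2N$; concentration inequalities control deviations \emph{larger} than the CLT scale, not smaller. The same objection applies to the claim $\max_k|y_k-Nu_k|\leq\log^2N$. You should be aware that the paper's own Lemma~\ref{lem:pospart} has exactly this problem (its proof invokes ``standard large deviation estimates'' at a sub-CLT scale), so you have reproduced an error of the paper rather than caught it; repairing it requires widening $\mathbf{I}_N$ to width of order $\sqrt N\log N$ and then re-examining every place where the $\log^2N$ positional control is consumed.

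This matters because of how you propagate positional errors. For condition \emph{(i)} your route is essentially sound, and arguably cleaner than the paper's, \emph{provided} the Hoeffding union bound is taken over all deterministic exclusion intervals, so that it applies to whichever interval $[y_{k_1},y_{k_2+1}]$ materializes; the positional control is then only needed to locate the endpoints inside the region where the density margin is of order $\ell_K/K$, for which $o(\ell_K)$ precision suffices. For condition \emph{(ii)}, however, the signal you must detect in $c_K(\omega')$ is only of order $\ell_K^3/K=K^{5/4}$, whereas an uncertainty $\delta$ on the positions of the empty sites feeds into the weighted sum \eqref{eq:DefcK} (whose weights are of order $\ell_K$) as an error of order $\ell_K\delta$; with your claimed precision $\delta=o(\ell_K)$ this is $o(\ell_K^2)=o(K^{3/2})\gg K^{5/4}$, and even with the best achievable global precision $\delta\sim\sqrt N\log N$ it is still too large. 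The argument can only be closed by conditioning on the position of the marked empty site and using that, under the product measure $\mu^N$, the configuration to its right is independent of that position, so that the relevant quantities depend only on the local gap structure and not on the accumulated global fluctuation. Your phrase ``concentration upgrades this to a high-probability statement'' hides precisely the step where the naive error propagation breaks down.
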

The proof of this lemma requires Assumption \eqref{ass:2reg}. It is fairly technical but poses no significant difficulty, we give it  in Appendix \ref{app:typical}. 

{\begin{remark}
Theorem \ref{thm:fronts} holds for initial measures different from 
$\mu^N$, as long as they satisfy the analog of Lemma \ref{lem:Typconf}.
\end{remark}}

\subsection{Bound on the maximum of the zero-range process}

Because the jump rate per site is always $1$ (provided that the constraint $\omega_x\geq 2$ is satisfied), the facilitated zero-range process can be interpreted as  a family of random walks, 
where each random walker jumps ``independently'' at a rate $1/k$, 
where $k$ is the number of other random walkers on the same site, assuming the random walker is not alone on the site, in which case it remains there. 
With this in mind, we prove a technical lemma, giving a uniform bound on the number of particles at any site in $\omega(t)$, 
which will be useful to bound from below the jump rate 
of each individual particle.

\begin{lemma}
\label{lem:UnifBoundomega} Let $T_K=K^{7/4}$.
Then,
the following limit holds:
\begin{equation*}
 \lim_{N\to\infty}\bb P_{\mu_N}\big(\widetilde{G}_N^c\big)=0,
\end{equation*}
where \begin{equation}
\label{eq:DefGN}
\widetilde{G}_N=\Big\{\forall\; x\in \bb T_{K_0}, \; \forall\; t\leq T_{K_0}N^{-2}, \quad \widetilde{\omega}_x(t)< \log^2 K_0 \Big\}.
\end{equation}
\end{lemma}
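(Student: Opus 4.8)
The plan is to dominate $\widetilde\omega(t)$ from above by a facilitated zero-range process started from a \emph{stationary} configuration, and then to exploit the explicit geometric tails of the stationary measure. Throughout we work on the event $\{K_0\in\mathbf I_N\}$, which by Lemma~\ref{lem:Typconf} has probability tending to $1$, so that $K_0=\Theta(N)$ and in particular $\log K_0=\Theta(\log N)$; the negligible event $\{\eta(0)\equiv\mathbf 1\}$ is discarded. Since the rate $g(k)=2\,\mathbf 1_{\{k\geq 2\}}$ of the facilitated zero-range process is non-decreasing, the monotonicity property \eqref{eq:monotonicity} applies: if one can produce, on the same probability space, a configuration $\omega^*(0)$ with $\widetilde\omega(0)\leq\omega^*(0)$ coordinatewise and $\omega^*(0)$ distributed according to the geometric product measure $\nu_{\alpha^*}^{K_0}$, then running $\omega^*$ under $N^2\mathcal L^{\mathrm{ZR}}_{K_0}$ in the monotone coupling yields $\widetilde\omega_x(t)\leq\omega^*_x(t)$ for all $x$ and all $t$. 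Because $\nu_{\alpha^*}^{K_0}$ satisfies detailed balance for $\mathcal L^{\mathrm{ZR}}_{K_0}$, it is stationary, hence for every fixed $t$ the configuration $\omega^*(t)$ is again distributed as $\nu_{\alpha^*}^{K_0}$. It then suffices to bound $\sup_{t\leq T_{K_0}N^{-2}}\max_{x}\omega^*_x(t)$.

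The existence of this dominating initial configuration is the main technical point, since the structure of $\widetilde\omega(0)$ is inhomogeneous and read off in the frame of the marked empty site. I set $\rho_{\max}=\sup_u\rho^{\rm ini}(u)<1$, using \eqref{ass:H1}, and $\alpha^*=1/(1-\rho_{\max})$. Reading $\widetilde\omega(0)$ off $\eta(0)\sim\mu^N$, each coordinate $\widetilde\omega_k(0)$ is the number of particles in a maximal run between two consecutive empty sites; by the inhomogeneous renewal structure of the independent Bernoulli field $\eta(0)$, these gaps are mutually independent, and each satisfies $\bb P(\widetilde\omega_k(0)\geq g)\leq\rho_{\max}^{\,g}\leq\nu_{\alpha^*}(\omega_0\geq g)$ for every $g\geq 1$. (Passing to the frame of the marked empty site only selects which gap is labelled $\widetilde\omega_0$ and never enlarges a gap.) Consequently, conditionally on $K_0$, the law of $\widetilde\omega(0)$ is stochastically dominated by $\nu_{\alpha^*}^{K_0}$, and Strassen's theorem provides a coupling with $\widetilde\omega(0)\leq\omega^*(0)\sim\nu_{\alpha^*}^{K_0}$.

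It remains to control $\omega^*$ over the whole window. Fix the grid $t_i=i\,N^{-2}$, $0\leq i\leq n$, with $n=T_{K_0}=\Theta(N^{7/4})$, so that $t_n=T_{K_0}N^{-2}$ and $n$ is polynomial in $N$. At each grid point, stationarity gives
\[
\bb P\Big(\max_{x\in\bb T_{K_0}}\omega^*_x(t_i)\geq\tfrac12\log^2 K_0\Big)\leq K_0\,\nu_{\alpha^*}\big(\omega_0\geq\tfrac12\log^2 K_0\big)=K_0\,\rho_{\max}^{\,\frac12\log^2 K_0-1},
\]
which is super-polynomially small, so the sum over the $n$ grid points still tends to $0$. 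Between two consecutive grid points, the occupancy of a fixed site $x$ can grow only through jumps entering $x$ from $x\pm 1$, and since each site fires at rate at most $2N^2$ under $N^2\mathcal L^{\mathrm{ZR}}_{K_0}$, the number of such jumps over an interval of length $N^{-2}$ is dominated by a sum of two Poisson variables of mean $2$; its tail at level $\tfrac12\log^2 K_0$ is again super-polynomially small, and a union bound over the $K_0$ sites and the $n$ intervals is negligible. On the complementary event, for every $t\in[t_i,t_{i+1}]$ and every $x$ one has $\omega^*_x(t)\leq\omega^*_x(t_i)+(\text{jumps into }x)<\log^2 K_0$; combining with $\widetilde\omega_x(t)\leq\omega^*_x(t)$ proves $\bb P_{\mu_N}(\widetilde G_N^c)\to 0$.

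I expect the main obstacle to be the initial stochastic domination of the second paragraph: a spatially inhomogeneous profile together with the shift to the marked-empty frame must be reconciled with a \emph{homogeneous} product comparison measure, which is exactly what the uniform bound $\rho_{\max}<1$ and the independence of the gaps make possible. Once this domination is secured, the stationarity of $\nu_{\alpha^*}^{K_0}$ converts the a priori diffusive growth of a single site's occupancy into a time-uniform geometric tail, and the remainder reduces to the routine grid/Poisson union bound above.
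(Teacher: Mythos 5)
Your proposal is correct and follows essentially the same route as the paper: stochastically dominate $\widetilde\omega(0)$ by the homogeneous geometric product measure $\nu_{\alpha^*}^{K_0}$ with $\alpha^*=\rho^*/(1-\rho^*)$, use the monotonicity \eqref{eq:monotonicity} of the zero-range dynamics together with the stationarity of $\nu_{\alpha^*}$, and conclude by a union bound exploiting the super-polynomially small geometric tails. The only (minor) divergence is in how the bound is made uniform over $t\leq T_{K_0}N^{-2}$: the paper realizes the process as a time-change of a discrete-time chain reversible w.r.t.\ $\nu_{\alpha^*}$ and union-bounds over its Poisson-many jump times, whereas you use a deterministic grid of mesh $N^{-2}$ at whose points stationarity applies, plus a Poisson tail bound on the number of arrivals at a site between consecutive grid points; both implementations are valid.
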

\begin{proof}[Proof of Lemma \ref{lem:UnifBoundomega}]
Let us denote by $\mathcal{Q}_{K,\alpha}^{\rm eq}$ the distribution of the zero-range process generated by $\mathcal{L}_K^{\rm ZR}$, started from its equilibrium distribution $\nu_\alpha^K$ on $\T_K$ (recall \eqref{eq:Defnurhobar}). We first claim that, letting $\rho^*=\sup_\T \rho^{\rm ini}<1$ and $\alpha^*=\frac{\rho^*}{1-\rho^*}$, we have 
\begin{equation}
\label{eq:GNc}
\bb P_{\mu_N}\big(\widetilde{G}_N^c\big)\leq \bb P_{\mu_N}\pa{K_0\not\in \mathbf{I}_N}+ \sup_{K\in \mathbf{I}_N}\mathcal{Q}_{K,\alpha^*}^{\rm eq}\pa{\mathcal{G}_{K}^c},
\end{equation}
where $\mathcal{G}_{K}$ is defined as
\begin{equation}
\label{eq:DefGNK}
\mathcal{G}_{K}=\Big\{\forall\; x\in \T_{K}, \; \forall\; t\leq T_K, \quad \omega_x(t)< \log^2 K \Big\}.
\end{equation}
 {Indeed, one can prove by standard arguments that there exists a coupling between $\widetilde\omega(0)$ and a semi-infinite zero-range configuration $\omega^*$ with distribution $\nu^*_{\alpha^*}$ such that $\widetilde\omega(0)\leq \omega^*_{|\T_{K_0}} $ (identifying $\T_{K_0}$ with $\{1,\ldots,K_0\}$).}

In particular, by monotonicity of the zero-range process \eqref{eq:monotonicity}, 
\begin{align}
\bb P_{\mu_N}\big(\widetilde{G}_N^c\big)
&\leq \bb P_{\mu_N}\pa{K_0\not\in \mathbf{I}_N}+\sum_{K\in \mathbf{I}_N}\bb P_{\mu_N}\pa{\widetilde{G}_N^c\cap\{K_0=K\}}\nonumber\\
&\leq \bb P_{\mu_N}\pa{K_0\not\in \mathbf{I}_N}+\sup_{K\in \mathbf{I}_N}\mathcal{Q}_{K,\alpha^*}^{\rm eq}\pa{\mathcal{G}_{K}^c},\label{e:decompG_N^c}
\end{align}
where we used to establish the second bound both the coupling above and the fact that the event $\mathcal{G}_{K}^c$ is increasing in the initial configuration.

We now estimate the equilibrium probability $\mathcal{Q}_{K,\alpha^*}^{\rm eq}\pa{\mathcal{G}_{K}^c}$. The process $\{\omega(t)\}_{t\geq 0}$ can be constructed as a time-change of a discrete-time Markov chain on $(\N_*)^{\T_K}$, where $\N_*=\{1,2,\dots\}$ is the set of positive integers. Consider the transition matrix given by \[p(\omega,\omega')= \begin{cases}\tfrac{1}{2K} & \text{if there exist }x\in\T_K \text{ and } \delta\in\{\pm 1\} \text{ s.t. } \omega'=\omega^{x,x+\delta} \text{ and }\omega_x\geq 2; \\ 0 & \text{else}.\end{cases}\] Let us denote by $\{\omega^{\mathbf d}(n)\}_{n\in\N}$ this discrete-time Markov chain with initial distribution $\nu_{\alpha^*}^*$. Then \[\big\{\omega(t)\big\}_t\overset{(d)}{=}\big\{\omega^{\mathbf d}(N_t)\big\}_t,\] where $\{N_t\}_{t\geq 0} $ is a standard Poisson process independent of $\omega^{\mathbf d}$. Moreover, $\omega^{\mathbf d}$ is reversible w.r.t.~$\nu_{\alpha^*}^*$. Therefore, writing $\mathbf{P}$ for the joint distribution of $\omega^{\mathbf d}$ and $N$, 
\begin{align*}
\mathcal{Q}_{K,\alpha^*}^{\rm eq}\pa{\mathcal{G}_{K}^c}&=\sum_{n=0}^\infty \mathbf{P}(N_{{T_K}}=n)\mathbf{P}\big(\exists\; i\leq n,\ \exists\; x\in\T_K\ \text{s.t.}\ \omega^{\mathbf d}_x(i)\geq \log^2K\big)\\
&\leq K\sum_{n=0}^\infty n \mathbf{P}(N_{T_K}=n)\nu_{\alpha^*}^*(\omega_0\geq \log^2K)\\
&\leq K\mathbf{E}[N_{T_K}]\Big(1-\frac{1}{\alpha^*}\Big)^{\log^2K-1}=K^{11/4}\Big(1-\frac{1}{\alpha^*}\Big)^{\log^2K-1}.
\end{align*}
Since $K\geq \bar v N-\log^2N$ for any $K\in \mathbf{I}_N$, we obtain
\begin{equation*}
\lim_{N\rightarrow\infty}\sup_{K\in \mathbf{I}_N}\mathcal{Q}_{K,\alpha^*}^{\rm eq}\pa{\mathcal{G}_{K}^c}=0.
\end{equation*}
Combining this with \eqref{e:decompG_N^c} and Lemma \ref{lem:Typconf}, we conclude the proof of the lemma.  
\end{proof}

\subsection{Front creation for the zero-range process}
\subsubsection{Typical ZR configurations become two-phased in subdiffusive time}
\begin{definition}[Two-phased zero-range configurations]
\label{Def:TypicalConfigurations2}
A zero-range configuration $\omega\in\N^{\T_K}$ is called \emph{two-phased} if there exists a partition $\T_K =A\sqcup B$, where $A$ and $ B$ are both connected subsets of $\T_K$, and $\omega_{|A}\geq 1$ and $\omega_{|B}\leq 1$.

 We denote by $\mathfrak{P}^{{\rm ZR}}_K\subset\N^{\T_K}$ the set of two-phased zero-range configurations. Note that a zero-range configuration $\omega^\eta$ is two-phased iff an associated exclusion configuration $\eta$ also is (regardless of the marked empty site chosen in the exclusion configuration).  
\end{definition}
The main ingredient to prove Theorem \ref{thm:fronts} is an analogous result for the zero-range process started from a typical configuration. Recall that $\mathcal{Q}_{K,\bar \omega}$ denotes the distribution of the \emph{non-accelerated} zero-range process with initial configuration $\bar \omega$ and infinitesimal generator $\mathcal{L}_K^{\rm ZR}$.
\begin{proposition}[Hitting time of $\mathfrak{P}_K^{{\rm ZR}}$]
\label{prop:HTZR}
Recall $T_K=K^{\frac 74}$, 
\begin{equation*}
\lim_{K\to\infty}\sup_{\bar \omega\in \mathcal{T}_K}\mathcal{Q}_{K,\bar \omega}\Big(\mathcal{G}_{K}\cap\big\{\omega(T_K)\not\in \mathfrak{P}_{K}^{{\rm ZR}}\big\}\Big)=0,
\end{equation*}
where $\mathcal{G}_{K}$ has been defined in \eqref{eq:DefGNK}.
\end{proposition}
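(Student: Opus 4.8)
The plan is to work directly with the facilitated zero-range process and to exploit the two defining features of a typical configuration together with the monotonicity \eqref{eq:monotonicity}. Throughout we restrict to the event $\mathcal{G}_{K}$, on which $\omega_x(t) < \log^2 K$ for all $x$ and all $t\leq T_K$; in particular every active site (one with $\omega_x \geq 2$) emits a particle in each direction at rate at least $1/\log^2 K$, so that up to the harmless time factor $\log^2 K$ the excess particles behave like symmetric random walks that get stuck once they land on a hole (a site with $\omega_x=0$), which such a hole then absorbs. I would split $\T_K$ into the \emph{bulk subcritical} zone $\mathbf{B}_K$, where condition (i) of Definition \ref{def:regZR} applies, and the two \emph{interface} zones inside $\mathbf{A}_K$ around the critical sites $0$ and $k_*$, where condition (ii) applies.

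First I would show the bulk freezes. Set the core $\mathbf{B}_K^\circ=\{2\ell_K,\dots,k_*-2\ell_K\}$. By condition (i), every connected box $\Lambda\subset\mathbf{B}_K$ with $|\Lambda|\geq\ell_K$ satisfies $\alpha_\Lambda(\omega)\leq 1$, i.e.\ it contains at least as many empty sites as excess particles; hence within distance $O(\ell_K)$ of any active site there is a hole ready to absorb an excess particle. Tracking each excess particle as a random walk absorbed upon reaching such a hole, and using the rate lower bound from $\mathcal{G}_K$, the absorption time of a single excess particle is $O(\ell_K^2\log^2 K)=O(K^{3/2}\log^2 K)=o(T_K)$. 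Since the total number of particles is $O(K)$, a union bound over the $O(K)$ excess particles, with exponential exit-time tail estimates to beat the factor $K$, shows that with probability tending to $1$ every site of $\mathbf{B}_K^\circ$ satisfies $\omega_x(T_K)\leq 1$. The delicate point is that the excess particles interact and are conserved, so the random-walk picture must be justified by a coupling, dominating the true dynamics from above and below by auxiliary processes and using condition (i) again to control the possible influx from the interface zones and rule out that any box becomes over-dense during relaxation.

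Next I would treat the two interfaces via condition (ii) and monotonicity. For each $x\in\mathbf{A}_K$ there is $\omega'\in\Omega_K^+\cup\Omega_K^-$ with $\omega\geq\tau_{-x}\omega'$, namely a box of length $10\ell_K$ abutting $x$ carrying a slight excess (density $\alpha_K=1+c_*\ell_K/K$), of order $\ell_K^2/K=K^{1/2}$ particles, biased toward $x$ through $c_K(\omega')\leq 0$. By \eqref{eq:monotonicity} the true process dominates the one started from $\tau_{-x}\omega'$, and for that simpler process I would show that within time $T_K$ at least one excess particle reaches the boundary site $x$, so that $\omega_x(t)\geq 1$ from some $t\leq T_K$ onward (recall that once $\omega_x\geq 1$ the site never returns to $0$, as a single particle cannot move). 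This forces the supercritical phase to invade every site of the two $\ell_K$-wide interface zones; since $\ell_K=o(K)$, this does not move the interface on the macroscopic scale.

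Finally I would combine the two steps. The core $\mathbf{B}_K^\circ$ is entirely $\leq 1$ while the interface zones and the supercritical region are entirely $\geq 1$, so on the torus the set $B$ of sites with final value $\leq 1$ is an interval and its complement $A=\T_K\setminus B$ is the complementary interval with all values $\geq 1$: the invasion leaves no hole inside $A$ and the bulk freezing leaves no active site inside $B$. Hence $\omega(T_K)\in\mathfrak{P}_K^{\mathrm{ZR}}$, and taking the supremum over $\bar\omega\in\mathcal{T}_K$ and letting $K\to\infty$ gives the claim. The main obstacle is the bulk freezing estimate: turning the random-walk-absorption heuristic into a rigorous bound despite the conservative interaction and the borderline density $\le 1$, while simultaneously ruling out isolated defects near the two interfaces that would break the connectedness of $A$ and $B$.
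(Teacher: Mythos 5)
Your treatment of the interface zones (condition \emph{(ii)}, monotonicity \eqref{eq:monotonicity}, domination by a configuration in $\Omega_K^\pm$ and a stuck-walk/first-passage argument showing an excess particle reaches $x$) is essentially the paper's Lemma \ref{lem:EK}, so that half is sound in outline. The genuine gap is your bulk step: you try to prove that \emph{every} site of $\mathbf{B}_K^\circ$ satisfies $\omega_x(T_K)\leq 1$, and this is both far stronger than what is needed and almost certainly false. The time $T_K=K^{7/4}$ corresponds to macroscopic time $\sim K^{-1/4}$, over which the supercritical front advances a microscopic distance of order $\sqrt{T_K}=K^{7/8}\gg \ell_K=K^{3/4}$ into the subcritical region; the invaded stretch has zero-range density above $1$ and carries active sites ($\omega_x\geq 2$), so with high probability your core $\mathbf{B}_K^\circ$ is \emph{not} entirely $\leq 1$ at time $T_K$. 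Independently of this, the random-walk-absorption heuristic does not survive the conservative matching problem (holes are consumed as they absorb excess particles, and there is a net influx from the interfaces), and your final gluing step leaves the collars $\{\ell_K,\dots,2\ell_K\}$ and $\{k_*-2\ell_K,\dots,k_*-\ell_K\}$ covered by neither of your two lemmas, so connectedness of the partition is not established.

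The paper avoids all of this by proving only Lemma \ref{lem:FK}: with high probability there is no triple $x<y<z$ in $\mathbf{B}_K$ with $\omega_x(T_K)=\omega_z(T_K)=0$ and $\omega_y(T_K)>1$. The key observation you missed is that in this zero-range dynamics a site that ever holds a particle holds one forever, so a site empty at time $T_K$ was empty throughout $[0,T_K]$; hence the particle count strictly between two such sites is conserved. For $z-x>\ell_K$ condition \emph{(i)} then makes the event impossible outright, and for $z-x\leq\ell_K$ a trapped free particle must survive unstuck in a box of size $\ell_K$ for the whole time $T_K$, which the stuck-walk estimate (Lemma \ref{lem:ExitTime}) rules out. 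The two-phased partition is then obtained by taking $\mathbf{B}$ to be the interval spanned by the empty sites of $\mathbf{B}_K$ at time $T_K$ --- which may be much smaller than $\mathbf{B}_K$, precisely to accommodate the invaded stretch --- rather than by freezing all of $\mathbf{B}_K$. To repair your argument you would essentially have to abandon bulk freezing and adopt this conservation-based formulation.
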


Proposition~\ref{prop:HTZR} is a consequence of Lemmas \ref{lem:EK} and \ref{lem:FK} below, which are proved respectively in Sections \ref{sss:supercritical} and \ref{sss:subcritical}.  Recall the definition \eqref{eq:DefFKEK} of the sets $\mathbf{A}_K$ and $\mathbf{B}_K$.

\begin{lemma}
\label{lem:EK} 
With high probability the set $\mathbf{A}_K$ becomes supercritical before time $T_K$, precisely:
\begin{equation}
\label{eq:EK}
\lim_{K\to\infty}\max_{\bar\omega\in \mathcal{T}_K}\mathcal{Q}_{K, \bar\omega}\big(\mathcal{G}_{K}\cap\{\exists\; x\in \mathbf{A}_K,  \;\omega_x(T_K)=0 \}\big)=0.
\end{equation}
\end{lemma}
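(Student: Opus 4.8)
The plan is to reduce the statement, via monotonicity and typicality, to a one-site first-passage estimate for the surplus particles, and then run a random-walk argument with room to spare in the exponents.

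\emph{Monotone reduction.} Fix $x\in\mathbf{A}_K$ and invoke condition (ii) of Definition \ref{def:regZR}: there is $\omega'\in\Omega_K^+\cup\Omega_K^-$ with $\bar\omega\geq\tau_{-x}\omega'$. By the monotone coupling \eqref{eq:monotonicity} one realizes the process so that $\omega(t)\geq\tilde\omega(t)$, where $\tilde\omega$ starts from $\tau_{-x}\omega'$; in particular $\omega_x(t)\geq\tilde\omega_x(t)$. Since $\mathcal{G}_{K}$ is decreasing in the configuration, $\mathcal{G}_{K}(\omega)\subset\mathcal{G}_{K}(\tilde\omega)$, so that $\mathcal{G}_{K}\cap\{\omega_x(T_K)=0\}\subset\mathcal{G}_{K}(\tilde\omega)\cap\{\tilde\omega_x(T_K)=0\}$. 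Using translation invariance and the left/right symmetry exchanging $\Omega_K^+$ and $\Omega_K^-$, it then suffices to prove
\[
\lim_{K\to\infty}K\sup_{\omega'\in\Omega_K^+}\mathcal{Q}_{K,\omega'}\big(\mathcal{G}_{K}\cap\{\omega_0(T_K)=0\}\big)=0,
\]
after which a union bound over the at most $K$ sites $x\in\mathbf{A}_K$ closes the argument.

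\emph{Occupancy is absorbing.} The facilitated dynamics preserves occupancy: a jump out of a site $y$ requires $\omega_y\geq2$ and leaves at least one particle behind, while an isolated particle is frozen; hence $\{\omega_y(s)\geq1\}$ entails $\{\omega_y(t)\geq1\}$ for all $t\geq s$. Consequently the leftmost occupied site $L(t):=\min\{y\geq0:\omega_y(t)\geq1\}$ is non-increasing, and $\{\omega_0(T_K)\geq1\}=\{L(T_K)=0\}$. It thus suffices to show that, started from $\omega'\in\Omega_K^+$, at least one particle crosses the bond $(0,1)$ before time $T_K$.

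\emph{First-passage estimate.} On $\mathcal{G}_{K}$ every site carries fewer than $\log^2 K$ particles, so a particle sharing its site with at least one other jumps to each neighbour at rate at least $(2\log^2 K)^{-1}$. Starting from $\omega'\in\Omega_K^+$, the excess consists of $\sim 10c_*\sqrt{K}$ particles lying in $\Lambda_K^+$, hence within distance $10\ell_K=10K^{3/4}$ of the origin, with centre of mass in the left half (this is precisely $c_K(\omega')\leq0$). Over the span $T_K=K^{7/4}$ a slowed walk spreads over a length $\sqrt{T_K/\log^2 K}=K^{7/8}/\log K$, which dwarfs the initial distance $K^{3/4}$. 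Discarding particles if needed through a further monotone coupling and comparing the leftward flux across $(0,1)$ with that of independent rate-$(2\log^2 K)^{-1}$ walks, the probability that none of the excess particles reaches $0$ before $T_K$ is bounded by a stretched exponential in $\sqrt{K}$, hence $o(1/K)$ uniformly in $\omega'\in\Omega_K^+$, which is what is needed.

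\emph{Main obstacle.} The delicate point is the last step: the particles are not independent random walks, and the kinetic constraint freezes every isolated particle, so a direct first-passage bound does not apply. The resolution is to exploit the strict density excess — more than one particle per site on a mesoscopic box — which guarantees that the surplus always has a companion to keep the front $L(t)$ advancing, together with the uniform lower bound $(2\log^2 K)^{-1}$ on jump rates furnished by $\mathcal{G}_{K}$ and the monotone coupling that lets us retain only a convenient sub-configuration. The comfortable separation of scales $K^{7/8}/\log K\gg K^{3/4}$ and the large number $K^{1/2}$ of excess particles make the failure probability stretched-exponentially small, easily absorbing the factor $K$ from the union bound.
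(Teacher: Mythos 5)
Your reduction steps are sound and match the paper's: the use of condition (ii) of Definition \ref{def:regZR} plus monotonicity to reduce to a single origin estimate uniform over $\Omega_K^+$, the union bound over the $O(K)$ sites of $\mathbf{A}_K$, and the observation that occupancy is absorbing (so it suffices that one particle ever crosses to site $0$) are all correct. The gap is in the final step, and it is exactly the step you yourself flag as the ``main obstacle'': you never actually prove that some surplus particle exits through the \emph{left} end of $\Lambda_K^+$ rather than the right. Your quantitative input --- diffusive spreading over length $\sqrt{T_K/\log^2K}=K^{7/8}/\log K \gg 10\ell_K=10K^{3/4}$ --- answers the wrong question: it shows the surplus particles have time to travel across the box, not that any of them exits on the side of the origin. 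The dynamics is symmetric, so a surplus particle's exit side is essentially determined by its position and by the other particles; there is no uniform lower bound on the probability that a given excess particle reaches $0$, and the particles are not independent, so the claimed $(1-p)^{\sqrt K}$-type ``stretched exponential in $\sqrt K$'' bound does not follow from anything you wrote. The assertion that ``the surplus always has a companion to keep the front $L(t)$ advancing'' is also unproved (and the front can in fact stall if the leftmost occupied site carries an isolated particle and nothing ever joins it from the right).

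The paper closes this gap with a different mechanism. It couples $\omega$ with the ``stuck'' process $\chi$ on $\Lambda_K^*=\{1,\dots,10\ell_K\}$ (Section \ref{sec:chi}), so that by Lemma \ref{lem:ExitTime} all particles are absorbed (isolated or exited) well before $T_K$; since $\bar\omega\in\Omega_K^+$ carries $10\ell_K\alpha_K$ particles in a box of $10\ell_K$ sites, at least $10c_*\ell_K^2/K$ of them must exit. The exit side is then controlled by the centred first moment $Z_j=\sum_x\chi_x(t_j)(x-5\ell_K)$, which is a discrete-time symmetric random walk along the jump times: the condition $c_K(\bar\omega)\le 0$ gives $Z_0\le 0$, whereas ``no particle exits left'' forces $Z_L\ge 50c_*\ell_K^3/K$, and a large-deviation bound for a symmetric walk over at most $\ell_K^{3+\theta}$ steps gives probability $O(e^{-C\ell_K^{3-\theta}/K^2})=O(e^{-CK^{1/16}})$ --- note this is far weaker than your claimed rate, but still $o(1/K)$. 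This martingale/counting argument is the essential idea missing from your proposal; you quote the hypothesis $c_K(\omega')\le 0$ but never use it quantitatively, and without it (or some substitute) the last step of your proof does not go through.
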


\begin{lemma}
\label{lem:FK} 
With high probability, after time $T_K$, there is a unique subcritical connected set, precisely:
\begin{multline}
\label{eq:FK}
\lim_{K\to\infty}\max_{\bar\omega\in \mathcal{T}_K}\mathcal{Q}_{K, \bar\omega}\Big(\mathcal{G}_K\cap \Big\{\exists\;  x<y<z\in \mathbf{B}_K, \\
\omega_x(T_K)=\omega_z(T_K)=0\text{\emph{ and }}\omega_y(T_K)> 1\Big\}\Big)=0.
\end{multline}
\end{lemma}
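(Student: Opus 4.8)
The plan is to show that the displayed pattern forces an \emph{isolated supercritical cluster} strictly inside $\mathbf{B}_K$, and then to prove that the excess mass of such a cluster is absorbed by the surrounding empty sites well before time $T_K$. First I would reformulate the event: if $x<y<z$ lie in $\mathbf{B}_K$ with $\omega_x(T_K)=\omega_z(T_K)=0$ and $\omega_y(T_K)>1$, let $[a,b]\ni y$ be the maximal run of sites carrying at least one particle at time $T_K$; then $[a-1,b+1]\subseteq[x,z]\subseteq\mathbf{B}_K$, the sites $a-1$ and $b+1$ are empty, and $\sum_{i=a}^{b}\omega_i(T_K)\geq(b-a+1)+1$, so the run $[a,b]$ is supercritical. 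It therefore suffices to bound the probability that, at time $T_K$, some maximal occupied run contained in $\mathbf{B}_K$ carries strictly more particles than its length, and to finish with a union bound over the $O(K^{2})$ positions of its two flanking empty sites.

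The engine of the argument is the monotonicity of the total excess $\mathcal{E}(\omega)=\sum_i(\omega_i-1)^+$: inspecting $\mathcal{L}_K^{\rm ZR}$ one sees that an admissible jump decreases $\mathcal{E}$ by one when it lands on an empty site (an \emph{absorption}) and leaves $\mathcal{E}$ unchanged otherwise, so $\mathcal{E}$ is non-increasing and can decrease only by filling empties. Following the random-walk interpretation used before Lemma \ref{lem:UnifBoundomega}, I would regard the supernumerary particles as symmetric nearest-neighbour walkers which, on the event $\mathcal{G}_K$, jump at rate at least $2/\log^2K$ and freeze upon reaching an empty site. The key structural input is that a typical configuration already places a trap next to every surplus: if $\omega_y\geq2$ and the centred window $\Lambda=[y-\lfloor\ell_K/2\rfloor,\,y+\lfloor\ell_K/2\rfloor]$ lies in $\mathbf{B}_K$, then $|\Lambda|\geq\ell_K$ forces $\sum_{\Lambda}\omega\leq|\Lambda|$ by condition (i), while all sites being nonempty would give $\sum_{\Lambda}\omega\geq|\Lambda|+1$; hence $\Lambda$ must contain an empty site, so every supercritical site of a typical configuration has an empty site within distance $\ell_K/2$. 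A walker started within $\ell_K/2$ of a trap is absorbed in a time of order $\ell_K^2\log^2K=K^{3/2}\log^2K$, which is $o(T_K)$, and the probability of avoiding all nearby traps up to time $T_K$ is at most $\exp(-c\,T_K/(\ell_K^2\log^2K))=\exp(-c\,K^{1/4}/\log^2K)$, decaying faster than any power of $K$ and thus beating the union bound.

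The hard part will be to guarantee that the traps remain available for the whole interval $[0,T_K]$, i.e.\ that the subcriticality of $\ell_K$-windows in $\mathbf{B}_K$ granted by condition (i) at time $0$ is not destroyed by mass flowing in, which could build an empty-free supercritical run of length $\geq\ell_K$ confining a surplus far from any empty. I would control this by a diffusive fluctuation estimate: the dynamics carries no drift, so the mass of a window of length $\ell_K$ fluctuates only by $O(\sqrt{\ell_K})=O(K^{3/8})$, which is negligible compared to $\ell_K=K^{3/4}$ and, more to the point, compared to the surplus of empties over excess guaranteed by the initial law. The tightest case is the inner edge of $\mathbf{B}_K$, at distance $\ell_K$ from the critical points, where this surplus is only of order $K^{-1/4}$, so there I would need the sharper absorption bound against a residual trap density $\sim K^{-1/4}$, for which the hitting time $\sim K^{1/2}\log^2K$ still stays well below $T_K$. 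To make these estimates uniform over $\bar\omega\in\mathcal{T}_K$ I would combine the censoring/coupling reduction of each walker to an independent slowed-down random walk with the monotone domination \eqref{eq:monotonicity} of $\widetilde\omega(t)$ by the stationary zero-range process issued from $\nu^*_{\alpha^*}$, exactly as in the proof of Lemma \ref{lem:UnifBoundomega}, which simultaneously keeps all occupations below $\log^2K$ on $\mathcal{G}_K$ and bounds the window masses from above.

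Assembling the pieces, on $\mathcal{G}_K$ every surplus particle lying in $\mathbf{B}_K$ is, before time $T_K$ and with probability $1-o(K^{-3})$, either absorbed into a neighbouring empty site or driven out of $\mathbf{B}_K$ into the buffer $\mathbf{A}_K$; in either case it leaves no isolated supercritical run inside $\mathbf{B}_K$. Summing the exponentially small survival probabilities over the $O(K^{2})$ candidate clusters yields the claimed limit, uniformly in $\bar\omega\in\mathcal{T}_K$.
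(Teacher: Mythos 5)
Your overall strategy (use condition \emph{(i)} of typicality to place an empty site near every surplus, then absorb the surplus in time $o(T_K)$) points in the right direction, but it misses the one structural fact on which the paper's proof hinges and which makes your self-identified ``hard part'' disappear: \emph{empty sites are never re-created by this dynamics}. A jump requires $\omega_x\geq 2$, so a site carrying at least one particle can never become empty again; consequently, if $\omega_x(T_K)=\omega_z(T_K)=0$ then $x$ and $z$ were empty throughout $[0,T_K]$ and no particle ever crossed them, so the particle count in $\{x+1,\dots,z-1\}$ is \emph{exactly conserved}. The paper's proof is then a two-case deterministic-plus-local argument on the events $E_{x,y,z}$ where additionally all intermediate sites are occupied at time $T_K$: if $z-x>\ell_K$, condition \emph{(i)} gives $\alpha_{\{x+1,\dots,z-1\}}(\bar\omega)\le 1$, which contradicts $\omega_y(T_K)>1$ with all intermediate sites occupied, so the event has probability \emph{zero}; if $z-x\le\ell_K$, the mobile excess at $y$ has been confined to a box of size $\ell_K$ for the whole of $[0,T_K]$, and the stuck zero-range coupling of Section~\ref{sec:chi} with Lemma~\ref{lem:ExitTime} (for $\theta=\tfrac14$, since $\log^2K\,\ell_K^{9/4}\ll T_K$) shows this survives with probability $\mc O(e^{-\ell_K^{1/8}})$, which beats the union bound over $K^3$ triples.

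By contrast, your route needs traps to remain within distance $\ell_K/2$ of every surplus for the whole interval $[0,T_K]$, and the estimate you offer for this --- ``the dynamics carries no drift, so the mass of a window of length $\ell_K$ fluctuates only by $O(\sqrt{\ell_K})$'' --- is both unproved and false near the inner edge of $\mathbf{B}_K$: over the time $T_K=K^{7/4}$ the supercritical phase invades the frozen region over a diffusive distance of order $\sqrt{T_K}=K^{7/8}\gg\ell_K$, so windows at distance $\ell_K$ from the critical points can gain far more than $O(\sqrt{\ell_K})$ particles. (This influx does not contradict the lemma precisely because the invaded region is never separated from $\mathbf{A}_K$ by an empty site --- again the permanence of empty sites, which your argument never invokes.) A secondary issue is that ``the supernumerary particle'' is not a well-defined tagged walker in a zero-range process; the rigorous substitute is the auxiliary stuck process $\chi$ and the stopping time $T^\Lambda_\chi$ of Lemma~\ref{lem:ExitTime}. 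Once you insert the conservation observation, your case analysis collapses onto the paper's and the fluctuation and trap-persistence estimates become unnecessary.
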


First, we prove that these two lemmas imply the result stated in Proposition \ref{prop:HTZR}. 

\begin{proof}[Proof of Proposition \ref{prop:HTZR}]
One can choose  
\[\mathbf{B}=\max\Big\{\{x,\dots,z\}\subset \mathbf{B}_K,  \;\omega_x(T_K)=\omega_z(T_K)=0\Big\} ,\]
where the max is taken for the inclusion. The configuration $\omega(T_K)$ is subcritical on $\mathbf{B}$  according to Lemma \ref{lem:FK}, and supercritical on $\mathbf{A}:=\T_K\setminus \mathbf{B}$ with high probability,  according to Lemma \ref{lem:EK}.
\end{proof}
\subsubsection{Stuck zero-range}
\label{sec:chi}
{In this paragraph we introduce an auxiliary process which will be used to prove both Lemma \ref{lem:EK} and Lemma \ref{lem:FK}.} Fix a box $\Lambda \subset \T_K$, and define its exterior boundary $\partial\Lambda=\{x\in \bb T_K, d(x, \Lambda)=1\}$ and set $\overline{\Lambda}=\Lambda\cup\partial\Lambda$. 
In what follows, we will couple $\omega$  with an auxiliary process $\chi$ where the particles in $\Lambda$ copy exactly the jumps performed by $\omega$, 
but any jump occurring from a site $y\notin \Lambda$ is canceled.  Under this coupling, particles in $\chi$ behave as those in $\omega$ up to the time when they leave $\Lambda$, 
where they get stuck.
Defined in this way, the process $\{\chi(t)\}_t$ is a Markov process, driven by the generator $\mc L_\Lambda^{\rm st}$, defined as
\begin{equation*}
\mathcal{L}_{\Lambda}^{\rm st}f(\chi):=\sum_{\substack{x\in \Lambda\\|x-y|=1}} {\bf 1}_{\{\chi(x)\geq 2\}}\big(f(\chi^{x,y})-f(\chi)\big).
\end{equation*}
We denote by $\Q^{\rm st}_{\Lambda, \bar\omega}$ the distribution of the process $\{\chi(t)\}_t$ started from $\bar\omega$ and driven by the generator $\mathcal{L}_{\Lambda}^{\rm st}$ above, and we denote by
\[T^{\Lambda}_\chi=\inf\big\{t\geq 0\; \colon  \;{\sup_{x\in \bb T_K}\chi_x(t)> \log^2 |\Lambda|}\quad\text{or}\quad \chi_y(t)\leq 1,\  \forall\; y\in \Lambda\big\}\] 
the time at which either the number of particles became too high at some site, or all the particles got stuck (either by leaving $\Lambda$ or by remaining alone on a site). 

The following result is analogous to Lemma 4.4 of \cite{BESS}, and is proved in the same way:
\begin{lemma}
\label{lem:ExitTime}
For any $\theta>0$, there exists $\lambda_0(\theta)$ such that for any sequence of sets $\Lambda(K)\subset \bb T_K$ satisfying $\log^2 K |\Lambda(K)|^{2+\theta}\ll K^2$, and $ |\Lambda(K)|\geq \lambda_0(\theta)$ for all $K$ large enough, 
\[\Q^{\rm st}_{ \Lambda(K),\bar\omega}\bigg( T_\chi^{\Lambda(K)}\geq \log^2 K |\Lambda(K)|^{2+\theta} \bigg)\leq \log^2 K e^{-|\Lambda(K)|^{\theta/2}}.\]
\end{lemma}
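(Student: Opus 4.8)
The plan is to follow the proof of \cite[Lemma 4.4]{BESS}, viewing the stuck process $\{\chi(t)\}_t$ as a system of constrained random walkers and controlling each walker separately. Write $L:=|\Lambda(K)|$ and $\tau:=\log^2K\,L^{2+\theta}$, and recall that $\mathcal{L}_\Lambda^{\rm st}$ only allows jumps out of sites $x\in\Lambda$ carrying at least two particles, and that once a particle reaches $\partial\Lambda$ it is frozen. The first step is to record two deterministic consequences of the definition of $T^\Lambda_\chi$. On $\{t<T^\Lambda_\chi\}$ every site carries strictly fewer than $\log^2 L$ particles, so the total number of particles sitting in $\overline{\Lambda}$ is at most $(L+2)\log^2 L\leq C\,L\log^2K$. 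Moreover, the event $\{T^\Lambda_\chi\geq\tau\}$ forces, for every $t\leq\tau$, the existence of at least one site of $\Lambda$ occupied by two or more particles, i.e. the restriction of $\chi(t)$ to $\Lambda$ is never flat up to time $\tau$.

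The heart of the argument is a comparison with simple random walks. Using a graphical construction with i.i.d.\ rate-one Poisson clocks attached to the oriented bonds $(x,x\pm1)$, $x\in\Lambda$, together with a stack bookkeeping at each site, one follows the trajectory $Z^{(i)}(t)$ of each individual particle. A particle is \emph{active} exactly when it sits on a site of $\Lambda$ carrying at least two particles; while active it performs symmetric nearest-neighbour jumps, and, since at most $\log^2 L$ particles share a site, its jump clock rings at rate at least $1/\log^2 L\geq 1/\log^2 K$. Consequently each $Z^{(i)}$ is, up to a time change slowing it down by a factor at most $\log^2 K$, a continuous-time symmetric random walk on $\Lambda$ that is frozen upon reaching $\partial\Lambda$. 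The standard exit-time estimate for such a walk, whose clock-time exit from a box of diameter $L$ has a Gaussian tail $\mathbb{P}(\text{exit clock-time}>s)\leq C e^{-cs/L^2}$, yields a constant $c>0$ such that
\[
\Q^{\rm st}_{\Lambda,\bar\omega}\big(Z^{(i)} \text{ has not reached } \partial\Lambda \text{ by time } \tau\big)\;\leq\; C\,e^{-c\,L^{\theta}},
\]
because $\tau=(L^2\log^2K)\,L^{\theta}$ exceeds the typical real-time exit $\asymp L^2\log^2K$ by the polynomial factor $L^{\theta}$.

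Summing this bound over the at most $C\,L\log^2K$ particles present in $\overline{\Lambda}$, and absorbing the polynomial prefactors $C\,L\log^2K$ into the stretched exponential by downgrading the exponent from $\theta$ to $\theta/2$ (which is licit once $L\geq\lambda_0(\theta)$), one concludes that with probability at least $1-\log^2K\,e^{-L^{\theta/2}}$ every particle has either left $\Lambda$ or the box $\Lambda$ has been drained of its excess by time $\tau$. On that event no site of $\Lambda$ carries two or more particles at time $\tau$, which by the first step contradicts $\{T^\Lambda_\chi\geq\tau\}$; this is exactly the claimed estimate.

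\textbf{Main obstacle.} The delicate point is the single-particle exit estimate in real (wall-clock) time, because a particle's clock stalls whenever it becomes momentarily isolated, and an isolated particle may later be rejoined (re-piling). The non-flatness guaranteed by $\{T^\Lambda_\chi\geq\tau\}$ keeps the dynamics driven, and the uniform occupation bound from the definition of $T^\Lambda_\chi$ keeps the active clock rate bounded below by $1/\log^2K$ whenever a particle \emph{is} active, so that along its embedded walk a particle's wall-clock exit time exceeds its clock-time exit time by at most the bounded factor $\log^2K$. Making this last comparison fully rigorous — following a fixed physical particle through successive active/frozen phases and a monotone coupling with the unconstrained walk — is the genuine content, and we carry it out exactly as in \cite[Lemma 4.4]{BESS}.
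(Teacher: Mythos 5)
Your proposal follows essentially the same route as the paper's proof: a coupling of the stuck process with a system of independent symmetric random walks slowed down by a factor $\log^2 K$, followed by standard Gaussian exit-time estimates and a union bound over the at most $|\Lambda|\log^2 K$ relevant particles, with the polynomial prefactor absorbed by lowering the exponent from $\theta$ to $\theta/2$. The delicate point you flag (a particle's clock stalling when it becomes isolated, so that the single-particle time-change comparison is not literally valid and one must instead establish the domination $T^\Lambda_\chi\leq T^\Lambda_\sigma$ at the level of the whole system) is precisely the part that the paper, like you, delegates to the coupling of \cite[Lemma 4.4]{BESS}.
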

\begin{proof}[Proof of Lemma \ref{lem:ExitTime}]
{The proof is based on a coupling argument and can be obtained with small modifications from the proof  of \cite[Lemma 4.4]{BESS}. We sketch here its more salient points. Let us fix $K$ and write $\Lambda:=\Lambda(K)$.

The first step consists in coupling the process $\chi$ with another process $\sigma$, namely a system of independent symmetric random walks that jump at rate $1/\log^2K$ inside $\Lambda$ and get stuck when they exit it. Letting \[T^{\Lambda}_\sigma=\inf\big\{t\geq 0\; \colon  \;{\sup_{x\in \bb T_K}\sigma_x(t)> \log^2 |\Lambda|}\quad\text{or}\quad \sigma_y(t)\leq 1,\  \forall\; y\in \Lambda\big\},\]standard arguments allow to show that $T^{\Lambda}_\chi\leq T^\Lambda_\sigma$.

It then remains to prove Lemma \ref{lem:ExitTime} with $T_\sigma^\Lambda$ in place of $T_\chi^\Lambda$, which follows from standard estimates on random walks (see e.g.\ \cite{varadhan} p. 173).} Note that before $T_\sigma^\Lambda$, there are at most $\log^2K|\Lambda|$ particles to consider.
\end{proof}
\subsubsection{Supercritical phase; proof of Lemma \ref{lem:EK}}
\label{sss:supercritical}

We will simply sketch the proof of Lemma \ref{lem:EK}, since given the definition of typical configurations it is analogous to Proposition 4.1 in \cite{BESS}. 
To prove Lemma  \ref{lem:EK} it is enough to show
\begin{equation}
\label{eq:EK2}
\sup_{\substack{\bar\omega\in \mathcal{T}_K\\ 
x\in \mathbf{A}_K}}
\mathcal{Q}_{K, \bar\omega}\big(\mathcal{G}_K\cap\{\omega_x(T_K)=0\}\big)=o(K^{-1}).
\end{equation}
Fix $x\in \mathbf{A}_K$, any typical configuration $\bar\omega$ satisfies condition \emph{(ii)} in Definition \ref{def:regZR}. Assume for example that there exists $\bar\omega'\in  \Omega^+_K$  such that $\bar\omega\geq \tau_{-x}\bar\omega'$; the other case is treated in the exact same way. Then, since the event $\mathcal{G}_K\cap\{\omega_x(T_K)=0\}$ is decreasing in the configuration, translating the problem back to the origin, to prove \eqref{eq:EK2} it is sufficient to prove that 
\[\limsup_{K\to\infty}\;\;K\sup_{\bar\omega\in  \Omega^+_K}\mathcal{Q}_{K, \bar\omega}\big(\mathcal{G}_K\cap\{\omega_0(T_K)=0\}\big)=0.\]
As outlined in Section \ref{sec:chi}, we now couple $\omega$ with the auxiliary zero-range process $\chi$ with generator $\mathcal{L}_{\Lambda_K^*}^{\rm st}, $ where
$\Lambda_K^*=\{1,\dots,10\ell_K\}$. 
Let us introduce the event
\begin{equation*}
\mathcal{G}_K^\chi=\bigg\{\sup_{x\in \Lambda_K^*}\sup_{t\leq T_K}\chi_t(x)< \log^2 K \bigg\},
\end{equation*}
which is $\mathcal{G}_K$'s counterpart  for $\chi$.

Fix $\bar\omega\in  \Omega^+_K$.
We can write
\begin{equation}
\label{eq:EK12}
\mathcal{Q}_{K, \bar\omega}\big(\mathcal{G}_K\cap\{\omega_0(T_K)=0\}\big) \leq  \Q^{\rm st}_{\Lambda_K^*, \bar\omega}\big(\mathcal{G}^\chi_K\cap\{\chi_0(T_K)=0\}\big). 
\end{equation}
We shorten $T_\chi=T_\chi^{\Lambda_K^*}$. 
Choose $\theta=\frac 14$ and recall that $\ell_K=K^{3/4}$, 
\[T_K=K^{7/4}\gg \log^2 K\ \ell_K^{2+\theta}.\] Therefore, according to Lemma \ref{lem:ExitTime}, for any $K$ large enough, 
\begin{equation}
\label{eq:EK13}
\Q^{\rm st}_{\Lambda_K^*, \bar\omega}\big(T_\chi\geq T_K\big)\leq e^{-K^{1/16}}.
\end{equation}
In particular, in order to prove Lemma \ref{lem:EK}, it is enough to prove 
\begin{equation}
\label{eq:chi}
\limsup_{K\to\infty} \;\; K\sup_{\bar\omega\in\Omega_K^+} \Q^{\rm st}_{\Lambda_K^*, \bar\omega}\pa{\{\chi_0(T_{\chi})=0\}\cap\mathcal{G}^\chi_K\cap\{T_\chi\leq T_K\}}=0.
\end{equation}
On the event $\mathcal{G}^\chi_K\cap\{T_\chi\leq T_K\}$, we have $\chi(T_\chi)\leq 1$ on $\Lambda_K^*$. In particular, $T_\chi=T'_\chi:=\inf\{t\geq 0\colon \chi_y(t)\leq 1,\ \forall y\in\Lambda_K^*\}$. Therefore, we only have to prove
\begin{equation}
\label{eq:chi'}
\limsup_{K\to\infty} \;\; K\sup_{\bar\omega\in\Omega_K^+} \Q^{\rm st}_{\Lambda_K^*, \bar\omega}\pa{\{\chi_0(T'_{\chi})=0\}}=0.
\end{equation}
First recall that any configuration $\omega\in \Omega^+_K$ has $10\ell_K(1+c_*\frac{\ell_K}{K})$ particles. Denote by $t_j$ the jump times  $0=t_0<t_1<\cdots<t_L=T'_{\chi}$ of the process $\chi$ before time $T'_\chi$, and define for any $j\leq L$
\[Z_j=\sum_{x=0}^{10\ell_K+1} \chi_x(t_j)(x-5\ell_K),\]
Since $\bar\omega\in \Omega^+_K$, recalling \eqref{eq:DefcK}, we have $Z_0 \leq 0$. Furthermore, 
 at time $T'_\chi$, at least $10c_*\ell^2_K/K$ particles have exited $\Lambda_K^*$, and if none is at site $0$, all those particles got stuck at site $10\ell_K+1$ and it is therefore straightforward to show that
\[\chi_0(T'_\chi)=0\quad\Rightarrow \quad Z_L\geq 50c_* \ell^3_K/K,\]
because then the minimal value for $Z_L$ is the case where \[\chi(T'_\chi)_{|\Lambda_K^*}\equiv {\bf1}, \quad \text{ and } \quad \chi_{10\ell_K+1}(T'_\chi)=10c_*\ell^2_K/K.\] 
Recall that there are less than $20\ell_K$ particles initially in $\Lambda_K^*$, and each of those particles either gets stuck or exits $\Lambda_K^*$ in $\mc O(\ell_K^{2+\theta})$ jumps with probability $1-\mc O(e^{-\ell_K^{\theta}})$. Elementary computations yield that 
\begin{equation*}
\sup_{\bar\omega\in \Omega_K^+} \Q^{\rm st}_{K, \bar\omega}\pa{ L \geq \ell_K^{3+\theta}}=\mc O( e^{-\ell_K^{\theta/2}}).  
\end{equation*}
Moreover, the process $\{Z_j\}_j$ is distributed as a discrete time, nearest-neighbor,  symmetric random walk up to time $L$, so that 
\begin{align*}\sup_{\bar\omega\in \Omega_K^+}\Q^{\rm st}_{K, \bar\omega}\pa{\chi_0(T_{\chi})=0\mbox{ and }L < \ell_K^{3+\theta}}&\leq \bb P\bigg(\sup_{0\leq j\leq \ell_K^{3+\theta}}X_j\geq 50c_*\ell_K^3 /K\bigg)\\
&=\mc O\big(e^{-C\ell_K^{3-\theta}/K^2}\big) \vphantom{\bigg(},
\end{align*}
 for some positive constant $C$ depending on $c_*$ where $\bb P$ is the distribution of a discrete time random walk $X$ initially at the origin.  
Since $\ell_K=K^{\frac 34}$, the last two bounds together finally yield for $\theta=\frac14$ that 
\[K \sup_{\bar\omega\in \Omega_K^+}\Q^{\rm st}_{K, \bar\omega}\big(\chi_0(T'_\chi)=0\big)=\mc O\big(K e^{-K^{1/16}}\big).\]
The right hand side vanishes as wanted as $K\to\infty$, which proves \eqref{eq:chi} and  then Lemma \ref{lem:EK}.

 \subsubsection{Subcritical phase; proof of Lemma \ref{lem:FK}}
\label{sss:subcritical}

Denote by $E$ the event inside brackets in \eqref{eq:FK}, and define 
\begin{align*} E_{x,y,z}=\Big\{\omega_x(T_K)=\omega_z(T_K)=0,\; & \omega_y(T_K)> 1\\ &\mbox{and} \quad\omega_j(T_K)\geq1, \;\forall\; j\in \{x+1,\dots,z-1\}\Big\},\end{align*} 
which yields straightforwardly
\[E=\bigcup_{x<y<z\in \mathbf{B}_K}E_{x,y,z}.\]
Since $|\mathbf{B}_K|\leq K$ for $K$ large enough, it is enough to show that, uniformly in $x<y<z\in \mathbf{B}_K$,  $K^3\mathcal{Q}_{K, \bar\omega}\pa{\mathcal{G}_K\cap E_{x,y,z}}$ vanishes.

\medskip

Assume first that $z-x>\ell_K$ and set $\Lambda=\{x+1,\dots, z-1\}$. If $\bar\omega$ is a regular configuration,  it satisfies condition \emph{(i)} in Definition \ref{def:regZR}, and in particular, we must have  $\alpha_{\Lambda}(\bar \omega)\leq 1$. However, no particle can cross an empty site, so that on the event $E_{x,y,z}$ we also have 
\[\alpha_{\Lambda}( \omega(T_K))=\alpha_{\Lambda}(\bar \omega)\leq 1.\]
By definition, on the event $E_{x,y,z}$ we have $\alpha_{\Lambda}(\bar \omega)>1$, because an extra particle is at site $y$, so that, finally for any $z-x> \ell_K$, and any regular configuration $\bar\omega$, $\mathcal{Q}_{K, \bar\omega}\pa{E_{x,y,z}}=0.$

\medskip

We can therefore assume that $z-x\leq \ell_K$. Now set $\Lambda=\{x+1,\ldots,x+\ell_K\}$. On $E_{x,y,z}$, there is at least one free (still able to move)
particle in $\Lambda$. However, on $E_{x,y,z}$ this particle must have remained in $\Lambda$ from time $0$ to $T_K$ because sites $x$ and $z$ are still empty at time $T_K$.
In particular, couple as in Section \ref{sec:chi} on $\Lambda$ the process $\omega$ with the stuck zero-range in $\Lambda$.
Then, by Lemma \ref{lem:ExitTime}
\begin{align*}
\mathcal{Q}_{K, \bar\omega}\big(\mathcal{G}_K \; \cap E_{x,y,z}\big)  \leq  \Q^{\rm st}_{\Lambda,\bar \omega}\big(T_{\chi}^{\Lambda}\geq T_K\big) =\mc O\pa{e^{-\ell_K^{1/8}}}.
\end{align*}
Finally, 
\[\mathcal{Q}_{K, \bar\omega}\big(\mathcal{G}_K\cap E\big)\leq K^3\sup_{x<y<z\in \mathbf{B}_K}\mathcal{Q}_{K, \bar\omega}\big(\mathcal{G}_K\cap E_{x,y,z}\big)=\mc O \pa{e^{-K^{1/16}}},\]
which vanishes as wanted as $K\to\infty$ and proves  Lemma \ref{lem:FK}.

 \subsection{Proof of Theorem \ref{thm:fronts}}\label{s:proofth2}
Let us show point \emph{(1)} of Theorem \ref{thm:fronts}. Choose $t_N=N^{-1/4}$, we first write, using Lemmas~\ref{lem:Typconf} and \ref{lem:UnifBoundomega}
\begin{align*}
\bb P_{\mu^N}(\eta(t_N)&\not\in \mathfrak{P}_N)\\
&=\bb P_{\mu^N}\big(\widetilde{\omega}(t_N)\not\in \mathfrak{P}_{K_0}^{{\rm ZR}}\big)\\
&=\bb P_{\mu^N}\pa{\{K_0\in \mathbf{I}_N\}\cap\{\widetilde{\omega}(0)\in \mathcal{T}_{K_0}\}\cap \widetilde{\mathcal{G}}_N\cap\left\{\widetilde{\omega}(t_N)\not\in \mathfrak{P}_{K_0}^{\rm ZR}\right\}}+o_N(1)\\
&\leq\max_{\substack{K\in \mathbf{I}_N\\
\bar \omega\in \mathcal{T}_K}}\bb P_{\mu^N}\pa{\left. \widetilde{\mathcal{G}}_N\cap\left\{\widetilde{\omega}(t_N)\not\in \mathfrak{P}_{K_0}^{\rm ZR}\right\}\right|K_0=K \mbox{ and }\widetilde{\omega}(0)=\bar \omega}+o_N(1).
\end{align*}
Note that for $N$ large enough, for any $K\in \mathbf{I}_N$,  $T_K=K^{7/4}\leq N^2 t_N$, therefore the probability in the right hand side above is less than $\mathcal{Q}_{K,\bar\omega}\pa{ \mc G_K\cap\left\{\omega(T_K)\not\in \mathfrak{P}_{K}^{\rm ZR}\right\}}$. This yields
\begin{equation*}
\bb P_{\mu^N}(\eta(t_N)\not\in \mathfrak{P}_N)\leq \max_{\substack{K\in \mathbf{I}_N\\
\bar \omega\in \mathcal{T}_K}}\mathcal{Q}_{K,\bar\omega}\pa{ \mathcal{G}_K\cap\left\{\omega(T_K)\not\in \mathfrak{P}_{K}^{\rm ZR}\right\}}+o_N(1).
\end{equation*}
For $K\in \mathbf{I}_N$, $K\to \infty$ as soon as $N\to\infty$. Therefore, letting $N\to\infty$, the right hand side vanishes according to Proposition \ref{prop:HTZR}.

It remains to show point \emph{(2)} of Theorem~\ref{thm:fronts}. Fix $t\in\R_+\cap(0,\tau]$; we give to $u^N_\pm(t)$ the arbitrary value $0$ if the configuration never became two-phased before time $t$ (which, according to point \emph{(1)}, occurs with vanishing probability). Recall Definition \ref{def:frontcreation} and \eqref{eq:two-phasedstable}, which ensures that $u^N_\pm(t)$ are well defined. Let us start with the leftmost interface at position $\uleft(t)$. Note that, by Proposition~\ref{ass:stronguniqueness}, $\uleft$ can be identified without any ambiguity with a continuous non-decreasing function from $\R_+$ to $[0,1)$. We first show that, for any $\varepsilon >0$, and any $t <\tau$,
\[ \P_{\mu^N} \bigg( \frac1N u_-^N(t) -u_-(t)\geq \varepsilon \bigg) \xrightarrow[N\to\infty]{} 0. \]
The other cases can be treated in the exact same way and are left to the reader. In the following we denote 
$I_\varepsilon(t):= [\uleft(t),\uleft(t) + \varepsilon] $ and we take $\varepsilon$ small enough such that $I_\varepsilon(t) \subset [\uleft(t),\uright(t))$. We also introduce its microscopic counterpart 
\[\mathbf{I}_\varepsilon^N(t) = \big\{ \lfloor N \uleft(t) \rfloor, \dots, \lfloor N(\uleft(t)+\varepsilon)\rfloor \big\}.  \]
As before, for the sake of clarity we omit integer parts $\lfloor \cdot \rfloor$ in all that follows.  By definition (recall Definition \ref{def:frontcreation}),
\begin{equation*}
\label{eq:missingproba}
\P_{\mu^N} \bigg( \frac1N u_-^N(t)- \uleft(t) \geq \varepsilon \bigg)  \leqslant \P_{\mu^N} \bigg(  \eta(t)_{| \mathbf{I}_\varepsilon^N(t)} \in \mathcal{E}_{\mathbf{I}_\varepsilon^N(t)}\bigg)+\varepsilon_N, 
\end{equation*}
where the error term $\varepsilon_N$  comes from the (very unlikely) scenario where $\frac{1}{N}u^N_-(t)\geq u_-(t)+\varepsilon\geq \frac{1}{N}u^N_+(t)\geq u_-(t)$. Since according to Lemma \ref{lem:pospart}, $u^N_+(0)$ is at distance at most $\log^2N$ of $Nu_*$, and given the respective monotonicities of $u^N_\pm,u_\pm$,  the error term $\varepsilon_N$ vanishes as $N\to\infty$. 

We now estimate $\P_{\mu^N} \big(  \eta(t)_{| \mathbf{I}_\varepsilon^N(t)} \in \mathcal{E}_{\mathbf{I}_\varepsilon^N(t)}\big)$.
 Let us choose $\varphi:\T\to [0,1]$ as a smooth test function which satisfies \[ \mathbf{1}_{[\uleft(t)+\delta, \uleft(t)+\varepsilon - \delta]} \leqslant \varphi \leqslant \mathbf{1}_{I_\varepsilon(t)},\] where $\delta$ satisfies (recall Proposition~\ref{ass:stronguniqueness})
\begin{equation}\label{eq:mic} \int_\T \rho_t(u) \varphi(u) du \leqslant \int_{I_\varepsilon(t)} \rho_t(u) du < \frac{\varepsilon}{2} - 2\delta  . \end{equation}
Moreover, on the event $\{\eta(t)_{| \mathbf{I}_\varepsilon^N(t)} \in \mathcal{E}_{\mathbf{I}_\varepsilon^N(t)}\}$, we have (recall \eqref{eq:ergset})
 \begin{equation}\label{eq:mac} \frac1N \sum_{x \in \T_N} \varphi\big(\tfrac x N\big) \eta_x(t) \geqslant \frac{1}{N}\sum_{x=  N(\uleft(t) +\delta)}^{N(\uleft(t) +\varepsilon -\delta)} \eta_x(t) \geqslant  \frac{\varepsilon}{2} - \delta.
\end{equation}
We conclude by using Theorem \ref{thm:hydro}.

\appendix

\section{Proof of Proposition \ref{prop:Lebesgue}} \label{app:topo} 

We start by proving tightness of $(\mathcal{P}_N)$ and assertion \emph{(1)}. To do so, first note that since only one particle per site is permitted,
\[\mathcal{P}_N\pa{\sup_{t\geq 0}\; \langle m_t,1\rangle>1}=0,\]
therefore we only need to show (cf.~Theorem 1.3 and Proposition 1.6, p.~51 in  \cite{KL}) that for any limit point $\mathcal{P}^*$, the following is satisfied: for any function $\xi\in C^2(\bb T)$ and for any positive $\varepsilon$,
\begin{equation}
\label{eq:1prime}
\mathcal{P}^*\bigg(\exists\ C(\xi)>0 \text{ s.t. }\; \sup_{|t-s|\leq \varepsilon}\big|\langle m_t,\xi\rangle-\langle m_s,\xi\rangle\big|\leq C(\xi)\varepsilon\bigg)=1.
\end{equation}
To prove \eqref{eq:1prime},  we can rewrite for any fixed $N$, by Dynkin's formula,
\begin{equation}
\label{eq:dynkin1}
\langle m_t^{N},\xi\rangle-\langle m_s^{N},\xi\rangle=\int_s^t N\sum_{x\in \T_N} \xi(\tfrac x N)\mathcal{L}_N\eta_x(\tau) d\tau +M_t^{N,\xi}-M_s^{N,\xi},
\end{equation}
where $M_t^{N,\xi}$ is a martingale w.r.t.~the filtration $\sigma\big( \eta(\tau), \tau \leqslant t\big)$. Since the model is gradient,  and recalling \eqref{eq:defh}, the first term in the right hand side can be rewritten as 
\begin{equation}
\int_s^t \frac 1 N\sum_{x\in \T_N} \Delta^N \xi(\tfrac x N)\tau_xh(\eta(\tau)) d\tau, \label{eq:thisterm}
\end{equation} 
where 
\begin{equation*}
\Delta^N \xi(\tfrac x N)=N^2\big(\xi(\tfrac{x+1} {N})+\xi(\tfrac{x-1} N)-2\xi(\tfrac x N)\big)=\partial_u^2 \xi(\tfrac x N)+o_N(1).
\end{equation*}
Since both $h$ and $\partial^2_u \xi$ are bounded, \eqref{eq:thisterm} is  bounded from above by $C(t-s)$ as wanted. The quadratic variation of the martingale 
$M_t^{N,\xi}$ can be explicitly computed (cf.~\cite[Lemma 5.1, p.~330]{KL}), and is given by
\[ \Big[M^{N,\xi}\Big]_t=N^2\int_0^t \Big( \mathcal{L}_N\big(\langle m_\tau^{N},\xi\rangle\big)^2-2\langle m_\tau^{N},\xi\rangle\mathcal{L}_N\langle m_\tau^{N},\xi\rangle \Big) d\tau=\mc O_N(\tfrac 1 N),\]
where the last estimate comes from elementary and classical computations, using the fact that the function $\xi$ is smooth. In particular, the martingale terms in \eqref{eq:dynkin1} vanish as well, which proves \eqref{eq:1prime} and assertion \emph{(1)}.

We now prove assertion \emph{(2)}, which is immediate because only one particle is allowed per site. This yields in particular that any limit point $\mathcal{P}^*$ of $(\mathcal{P}_N)$ satisfies 
\begin{equation*}
\mathcal{P}^*\pa{\forall\; t \in [0,T], \; \forall\;  \xi \in L^1(\T), \quad \langle m_{t},\xi\rangle\leq \int_\T \xi(u) du}=1,
\end{equation*}
which proves the assertion.

\section{Proof of Lemma \ref{lem:correlations}}
\label{app:tech}
Let us compute explicitly
\[\gene_N\big(\langle m^N,m^N*\xi^N\rangle\big)=\frac{1}{N^2}\sum_{x,y \in \T_N}\xi^N\big(\tfrac y N\big)\gene_N\big(\eta_{x-y}\eta_x\big)=:\mathrm{I}+\mathrm{II},\]
where $\mathrm{I}$ and $\mathrm{II}$  respectively correspond to the cases where $y\notin\{ 1, N-1, N\}$ and $y\in\{1,N-1\}$. Note that the contribution of the terms for $y=N$ vanishes because $\sum_{x\in \T_N}\gene_N\eta^2_x=\gene_N\pa{\sum_{x\in \T_N}\eta_x}=0,$ since the dynamics is conservative.
\medskip

More precisely, shortening $F(x)=\tau_xh(\eta)$, and defining its discrete Laplacian as 
\[ \delta^N F(x):=F(x+1)+F(x-1)-2F(x), \]
elementary computations yield
\begin{align*}
\mathrm{I}&=\frac{1}{N^2}\sum_{x\in\T_N}\sum_{y=2}^{N-2}\xi^N\big(\tfrac y N \big)\pa{\eta_{x-y}\;\delta^N F(x)+\eta_x\;\delta^NF(x-y)} \\
&=\frac{1}{N^2}\sum_{x\in\T_N}\sum_{y=2}^{N-2}\xi^N\big(\tfrac y N \big)\pa{\eta_{x+y}+\eta_{x-y}}\delta^NF(x)
\end{align*}
and 
\begin{equation*}
\mathrm{II}=  \frac{\xi^N(\frac 1 N )+\xi^N(\frac {N-1} N )}{N^2}\sum_{x\in\T_N}\pa{\eta_{x+1}+\eta_{x-1}+\eta_x} \delta^NF(x).
\end{equation*}
Finally, 
\begin{equation*}
\mathrm{I}+\mathrm{II}=\frac{1}{N^2}\sum_{x,y\in \T_N} \xi^N\big(\tfrac y N \big)\pa{\eta_{x+y}+\eta_{x-y}} \delta^NF(x)+\frac{\Delta^N \xi^N(0)}{N^4}\sum_{x\in\T_N}\eta_x \;\delta^NF(x).
\end{equation*}
Successive summations by parts in $x$ and $y$ in the first sum, and in $x$ in the second one, prove the lemma.

\section{Proof of Lemma \ref{lem:Typconf}}
\label{app:typical}
\subsection{Law of large numbers for the positions of zeros}\label{ss:mappings}

We first check that the microscopic and macroscopic mappings defined respectively in \eqref{eq:DefPiNK} and \eqref{eq:Defv1}--\eqref{eq:Defv2} match in the limit.

 For any $k\leq N\bar v$, denote $u_k$ the solution of $N\int_0^{u_k}(1-\rho^{\rm ini}(u))du=k $, and for $k>N\bar v$, we let $u_k=1$. Note in particular that  
\begin{equation}
\label{eq:uk}
u_k=v^{-1}(k/N),
\end{equation}
where the function $v(u)$ was introduced in \eqref{eq:Defv1}. From the law of large numbers, we expect that the $k$--th empty site $y_k\in \T_N$ in the initial configuration $\eta(0)$ should be close to site $Nu_k$.

\begin{lemma}
\label{lem:pospart}
\begin{equation}
\lim_{N\to\infty}\; \bb P_{\mu_N}(E_N)=0,
\label{eq:K0inIN}
\end{equation}
where 
\[ E_N:=\left\{|K_0-\bar v N|\geq \log^2N\quad  \mathrm{or}\quad \max_{k\leq K_0} |y_k-Nu_k|>\log^2 N \right\}.\] 
\end{lemma}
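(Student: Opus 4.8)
The plan is to reduce everything to a law of large numbers for the emptiness counting function of the product measure $\mu^N$, and then invert it to control the positions of the empty sites. Set $Z_m:=\sum_{x=1}^m(1-\eta_x(0))$ for $1\le m\le N$, which under $\mu^N$ is a sum of independent (non-identically distributed) Bernoulli variables with mean $\mathbb{E}_{\mu^N}[Z_m]=\sum_{x=1}^m(1-\rho^{\rm ini}(x/N))$. Two observations drive the proof. First, $K_0=Z_N$, so controlling $K_0$ is just the endpoint case $m=N$. Second, the position $y_k$ of the $k$-th empty site satisfies the monotone duality $\{y_k\le m\}=\{Z_m\ge k\}$, so any two-sided control of the $Z_m$'s around their means transfers directly into two-sided control of the $y_k$'s.

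For the means, since $\rho^{\rm ini}\in C^2(\mathbb{T})$ by \eqref{ass:2reg}, a Riemann-sum estimate gives $\mathbb{E}_{\mu^N}[Z_m]=Nv(m/N)+O(1)$ uniformly in $m$, with $v$ as in \eqref{eq:Defv1}; in particular $\mathbb{E}_{\mu^N}[Z_N]=\bar v N+O(1)$ and, with $u_k=v^{-1}(k/N)$ from \eqref{eq:uk}, $\mathbb{E}_{\mu^N}[Z_{Nu_k}]=k+O(1)$. Because \eqref{ass:technical} forces $\rho^{\rm ini}<1$ and $\rho^{\rm ini}$ is continuous on the compact torus, $1-\rho^{\rm ini}\ge c_0>0$, so $v$ is bi-Lipschitz: $v(u)-v(u')\ge c_0(u-u')$ for $u\ge u'$. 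This non-degeneracy is exactly what lets me convert count-deviations into position-deviations with no loss.

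First I would establish concentration of the centered sums. Each $Z_m-\mathbb{E}_{\mu^N}[Z_m]$ is a sum of independent bounded increments, so Hoeffding's (equivalently Azuma's) inequality yields sub-Gaussian tails $\mathbb{P}_{\mu^N}(|Z_m-\mathbb{E}_{\mu^N}[Z_m]|>w)\le 2\exp(-2w^2/m)$, and a maximal inequality together with a union bound over $1\le m\le N$ controls $\max_m|Z_m-\mathbb{E}_{\mu^N}[Z_m]|$. On this good event the instance $m=N$ gives the bound on $|K_0-\bar v N|$. For the positions I combine the duality with bi-Lipschitzness: if $y_k\ge Nu_k+\Delta$ then $Z_{Nu_k+\Delta}<k$, whereas $\mathbb{E}_{\mu^N}[Z_{Nu_k+\Delta}]\ge k+c_0\Delta-O(1)$, so the good event is violated once $c_0\Delta$ exceeds the fluctuation window; the symmetric argument bounds $Nu_k-y_k$. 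A final union bound over $k\le K_0\le N$ closes the estimate, after handling the torus convention (empty sites counted to the right of $0$) and the boundary convention $u_k=1$ for $k>N\bar v$.

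The delicate point is uniformity: the localization must hold simultaneously for all $O(N)$ indices $k$ against the target window, which is precisely why the argument needs the exponential (rather than merely polynomial) tail from Hoeffding, combined with a maximal inequality, and why the deviation window fed into the concentration step must dominate the natural $\sqrt{N}$-scale of the counting martingale before the union bound survives. Everything else — the Riemann-sum control of the means and the bi-Lipschitz inversion from counts to positions — is routine given \eqref{ass:2reg} and \eqref{ass:technical}, and the resulting window is in any case far smaller than the scale $\ell_K=K^{3/4}$ used in the subsequent analysis.
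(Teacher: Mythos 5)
Your strategy is the same as the paper's: the paper's proof consists precisely of the duality facts relating $\{y_k-Nu_k>\log^2N\}$ (resp.\ $\{y_k-Nu_k<-\log^2N\}$) to the counting function $\sum_x(1-\eta_x(0))$ falling below (resp.\ above) $k$, followed by an appeal to ``standard large deviation estimates for sums of independent variables''; you have simply made the duality, the Riemann-sum control of the means, the bi-Lipschitz inversion of $v$, and the union bound explicit.

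However, the concentration step does not close at the stated window, and your own sanity check exposes this. Hoeffding for $Z_m$ at deviation $w=\log^2N$ gives $2\exp(-2\log^4N/m)$, which for $m$ of order $N$ tends to $1$, not $0$. Indeed, since $\rho^{\rm ini}$ is continuous and crosses $\tfrac12$, the variance of $K_0=Z_N$ under $\mu^N$ is of order $N$, so by the central limit theorem $|K_0-\bar v N|$ is typically of order $\sqrt N$ and $\P_{\mu^N}\big(|K_0-\bar vN|\geq \log^2N\big)\to1$. You write that ``the deviation window fed into the concentration step must dominate the natural $\sqrt N$-scale of the counting martingale'', which is exactly the right criterion --- but $\log^2N$ does not dominate $\sqrt N$, so the criterion fails for the window you are asked to prove. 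To be fair, this is a defect of the lemma as stated (and of the paper's one-line proof), not of your scheme: replacing $\log^2N$ throughout by, say, $N^{1/2}\log N$, your argument goes through verbatim, and the enlarged window is still $o(\ell_K)=o(K^{3/4})$, which is all that the subsequent analysis (Lemma \ref{lem:Typconf} and the front-location estimates) actually uses. As written, though, the Hoeffding bound you invoke does not yield the displayed statement, and you should either flag the needed enlargement of the window or explain why a $\log^2N$-scale deviation bound could hold, which it cannot for a nondegenerate sum of $N$ independent Bernoulli variables.
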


\begin{proof}This estimate is a simple consequence of the facts that \begin{align*}
& \text{if} \quad y_k-Nu_k>\log^2 N \quad \text{then} \quad \sum_{x=0}^{Nu_k+\log^2N}(1-\eta_x(0))\leq k \qquad \mu^N\text{--a.s.},\\
& \text{if}\quad y_k-Nu_k<\log^2 N  \quad \text{then} \quad \sum_{x=0}^{Nu_k-\log^2N}(1-\eta_x(0))\geq k \qquad \mu^N\text{--a.s.},
\end{align*}
together with standard large deviation estimates for sums of independent variables.
\end{proof}
Throughout the rest of the proof, we now assume that $E_N^c$ holds, we are going to show that condition $(i)$ and $(ii)$ of Definition \ref{def:regZR} hold as well for any $N$ large enough, which will prove Lemma \ref{lem:Typconf}.

\subsection{Subcritical phase}

We first deal with condition $(i)$. 
By Assumption \eqref{ass:2reg}, there exists $c_0=c_0(\rho^{\rm ini})>0$ such that for $N$ large enough, for any discrete interval $ \{k_1,\dots, k_2\}\subset \mathbf{B}_{K_0}$ (recall \eqref{eq:DefFKEK}), we have on the event $E_N^c$,  
\begin{equation} 
\label{eq:uk1k2}
[u_{k_1}, u_{k_2+1}]\subset \cro{c_0\frac{\ell_{K_0}}{K_0},u_*-c_0\frac{\ell_{K_0}}{K_0}}.
\end{equation}
Recall that $\widetilde{\omega}(0)$ denotes the zero-range configuration mapped from the initial exclusion configuration $\eta(0)$. Fix $\Lambda=\{k_1,\dots, k_2\}\subset \mathbf{B}_{K_0}$ with cardinality $|\Lambda|=k_2-k_1+1\geq\ell_{K_0}$. By definition \eqref{eq:alphaLambda}, 
\[\alpha_\Lambda(\widetilde{\omega}(0))=\frac{1}{k_2-k_1+1}(y_{k_2+1}-y_{k_1}-(k_2-k_1+1)).\]
On $E_N^c$, we have $y_{k_2+1}-y_{k_1}\leq N(u_{k_2+1}-u_{k_1})+ 2\log^2 N$. Moreover, using \eqref{eq:uk1k2} and Assumption \eqref{ass:2reg}, there exists $c_1=c_1(\rho^{\rm ini})>0$ such that
\[k_2+1-k_1=N\int_{u_{k_1}}^{u_{k_2+1}}(1-\rho^{\rm ini}(u))du\geq N(u_{k_2+1}-u_{k_1})\pa{\frac 12 +\frac{c_1\ell_{K_0}}{K_0}}.\]

Putting those bounds together, we obtain that on $E_N^c$
\[\alpha_\Lambda(\widetilde{\omega}(0)) \leq \bigg(\frac 12 +c_1\frac{(\bar v N-\log^2N)^{3/4}}{\bar v N+\log^2 N}\bigg)^{-1}-1+\frac{2\log^2N}{(\bar v N-\log^2N)^{3/4}}.
\]
For $N$ large enough, the right hand side above is less than $1$, therefore condition $(i)$ of Definition \ref{def:regZR} holds. 

  \subsection{Supercritical phase}
We now prove that condition $(ii)$ of Definition \ref{def:regZR} holds. With $k_*=\frac{K_0v_*}{\bar v}$ (the microscopic site corresponding to the macroscopic critical point $v_*$), note that on the event $E_N^c$, we have \[u_{k_*}\in \big[v^{-1}(v_*-\log^2 N/N),v^{-1}(v_*+\log^2 N/N)\big].\] Therefore, by Assumption \eqref{ass:2reg}, there exists $c_2=c_2(\rho^{\rm ini})$ such that
$|u_{k_*}-u_*|\leq c_2\log^2 N/N$. To prove that condition $(ii)$  holds, we need to consider sites in 
\[A_{K_0}= \{0,\dots,\ell_{K_0}\} \cup\left\{k_*-\ell_{K_0},\dots, K_0\right\}.\]
The case where $x_0$ sits in the bulk of $A_{K_0}$, \textit{i.e.}~when there is a macroscopic region around $x_0/N$ in which the density in $\tilde\omega(0)$ is larger that $1+\epsilon$ for some $\epsilon>0$, follows from the same arguments as  in \cite[Lemma 4.10]{BESS}. This is the easiest case and we do not detail it here. The hardest case is when $x_0$ is close to the interface, so the density around is close to $1$ and particles are not as much in excess.
To avoid burdensome notations, we will only detail the proof that site $x_0:=k_*-\ell_{K_0}$ satisfies $(ii)$, which would adapt straightforwardly to other sites in $A_{K_0}$. 
We therefore prove that there exists $\tau_{x_0}\omega\in \Omega^+_{K_0}$ such that $\omega\leq \widetilde{\omega}(0)$. 

\begin{lemma}
Let $\Lambda^+:=\{x_0+1,\dots,x_0+10\ell_{K_0}\}$ and $G_N=\{\alpha_{\Lambda^+}(\tilde\omega(0))\geq \alpha_{K_0}\}.$ Then 
\[\lim_{N\rightarrow\infty}\P_{\mu_N}(G_N)=1.\]
\end{lemma}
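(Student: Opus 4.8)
The plan is to reduce the statement to the deterministic control of the empty--site positions furnished by Lemma~\ref{lem:pospart}: I will show that $E_N^c\subset G_N$ for all $N$ large, which suffices since $\P_{\mu_N}(E_N)\to0$. First I would express the empirical density purely in terms of the zeros of $\eta(0)$. Summing the telescoping relation $\widetilde\omega_k=y_{k+1}-y_k-1$ (with $y_k$ the $k$-th empty site, counted from the marked one, whose position differs from the frame of Lemma~\ref{lem:pospart} by only an $O(\log^2N)$ index shift on $E_N^c$) over $\Lambda^+=\{x_0+1,\dots,x_0+10\ell_{K_0}\}$ gives
\[\alpha_{\Lambda^+}(\widetilde\omega(0))=\frac{y_{x_0+10\ell_{K_0}+1}-y_{x_0+1}}{10\ell_{K_0}}-1.\]
On $E_N^c$ one has $|y_k-Nu_k|\le\log^2N$ for every $k$, so this quantity is bounded below by $\frac{N(u_{x_0+10\ell_{K_0}+1}-u_{x_0+1})}{10\ell_{K_0}}-1$ up to an error $O(\log^2N/\ell_{K_0})$.

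Next I would turn the increment of $u_k=v^{-1}(k/N)$ (recall \eqref{eq:uk}) into an integral of the profile. Differentiating \eqref{eq:Defv1}--\eqref{eq:Defv2} yields $(v^{-1})'(w)=1/(1-\rho^{\rm ini}(v^{-1}(w)))=1+\alpha^{\rm ini}(w)$, whence
\[N\big(u_{k'}-u_k\big)=(k'-k)+N\int_{k/N}^{k'/N}\alpha^{\rm ini}(w)\,dw,\]
so that $\alpha_{\Lambda^+}(\widetilde\omega(0))$ equals, up to $O(\log^2N/\ell_{K_0})$, the average of $\alpha^{\rm ini}$ over the window $[k/N,k'/N]$. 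Writing $\delta:=\ell_{K_0}/N\sim N^{-1/4}$ and using $x_0=k_*-\ell_{K_0}$ with $k_*/N=v_*+O(\log^2N/N)$ on $E_N^c$, this window is $[v_*-\delta,\,v_*+9\delta]$ up to negligible shifts. I would then Taylor expand $\alpha^{\rm ini}$ at $v_*$, using \eqref{ass:2reg} (which makes $\alpha^{\rm ini}$ of class $C^2$ near $v_*$ since $\rho^{\rm ini}<1$) together with $\alpha^{\rm ini}(v_*)=1$ and $a:=\partial_v\alpha^{\rm ini}(v_*)>0$. Because the window is offset by $+4\delta$ from $v_*$, the linear term survives and produces
\[\frac{1}{10\delta}\int_{v_*-\delta}^{v_*+9\delta}\alpha^{\rm ini}(w)\,dw=1+4a\delta+O(\delta^2).\]

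Finally I would compare this with $\alpha_{K_0}=1+c_*\ell_{K_0}/K_0$. On $E_N^c$, $K_0=\bar vN(1+o(1))$, so $\alpha_{K_0}=1+(c_*/\bar v)\delta(1+o(1))$, and by the very definition \eqref{eq:DefC*} one has $c_*/\bar v=\min\{-\partial_v\alpha^{\rm ini}(0),a\}\le a$. Hence the leading surplus $4a\delta$ of $\alpha_{\Lambda^+}$ exceeds the required $(c_*/\bar v)\delta$ by at least $3a\delta\sim N^{-1/4}$, which dominates every remainder ($O(\delta^2)=O(N^{-1/2})$, the position fluctuation $O(\log^2N/\ell_{K_0})$, and the window shift, all $o(N^{-1/4})$). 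This gives $\alpha_{\Lambda^+}(\widetilde\omega(0))\ge\alpha_{K_0}$ on $E_N^c$ for $N$ large, i.e. $E_N^c\subset G_N$, and thus $\P_{\mu_N}(G_N)\ge\P_{\mu_N}(E_N^c)\to1$. The genuinely delicate point is the \emph{asymmetry} of the window: were the box centered at $v_*$ the linear correction would cancel and the sign of $\alpha_{\Lambda^+}-1$ would be governed by the quadratic term, which may be negative. It is precisely the choice $x_0=k_*-\ell_{K_0}$ with a box of width $10\ell_{K_0}$ reaching deep into the supercritical phase that forces a positive linear surplus of order $\delta$, and verifying that this surplus strictly beats the threshold $c_*\ell_{K_0}/K_0$ while all error terms remain $o(\delta)$ is the crux of the estimate.
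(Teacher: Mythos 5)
Your proof is correct and follows essentially the same route as the paper's: reduce to the event $E_N^c$ via Lemma \ref{lem:pospart}, express $\alpha_{\Lambda^+}(\tilde\omega(0))$ through the positions of the empty sites, Taylor expand around $v_*$ to extract the linear surplus produced by the asymmetric window $[v_*-\delta,v_*+9\delta]$, and compare with $\alpha_{K_0}$ via the definition of $c_*$. The only cosmetic difference is that you expand $\alpha^{\rm ini}$ (after rewriting the increment of $u_k$ as an integral of $1+\alpha^{\rm ini}$) where the paper expands $v^{-1}$ directly; the two computations coincide since $\partial_v\alpha^{\rm ini}(v_*)=8\,\partial_u\rho^{\rm ini}(u_*)$, and your leading coefficient $4\,\partial_v\alpha^{\rm ini}(v_*)$ is consistent.
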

\begin{proof} From \eqref{eq:K0inIN}, it is enough to prove that $\P_{\mu_N}(G_N \cap E_N^c) \to 1$ as $N \to \infty$. There exists $C>0$ such that, on $E_N^c$
\begin{align*} &  \Big| y_{x_0} -  N v^{-1} \big(v_* - \ell_{\bar v N}/N\big)\Big| \leqslant C \log^2(N) , \\
& \Big| y_{x_0+10\ell_{K_0}+1} -  N v^{-1} \big(v_* + 9\ell_{\bar v N}/N\big)\Big| \leqslant C \log^2(N) . \end{align*}
Furthermore, 
\[\alpha_{\Lambda^+}(\tilde\omega(0))=\frac{y_{x_0+10\ell_{K_0}+1}-y_{x_0}-10\ell_{K_0}}{10\ell_{K_0}},\]
and, developing the function $v^{-1}$ at $v_*$, we obtain
\begin{equation}
\label{eq:taylorv1}
v^{-1}(a)=u^*+2(a-v_*)+4\partial_u\rho^{\rm ini}(u_*)(a-v_*)^2+\mc O((a-v_*)^3).
\end{equation}
Recall that we must have $\bar v\geq \frac12$, the four equations and bounds above together yield on $E_N^c$, since $K_0\geq N/2+\mc O(\log^2 N)$ 
\begin{align*}\alpha_{\Lambda^+}(\tilde\omega(0))&=1+24\partial_u\rho^{\rm ini}(u^*)\frac{\ell_{K_0}}{N}+\mc O((\ell_{K_0}/N)^2)\\
&\geq 1+12\partial_u\rho^{\rm ini}(u^*)\frac{\ell_{K_0}}{K_0}+\mc O((\ell_{K_0}/N)^2)\\
&\geq \alpha_{K_0}\end{align*}
for $N$ large enough.
\end{proof}

Assuming we are on $G_N$, we define $\omega$ by keeping from $\widetilde{\omega}(0)$ only the $\widehat{n}:=10\ell_{K_0}\alpha_{K_0}$ particles closest to site  $x_0$ in $\Lambda^+$. This configuration obviously satisfies both $\sum_{x\in\Lambda^+}\omega_x=|\Lambda^+|\alpha_{K_0}$ and $\omega\leq \widetilde{\omega}(0)$, so that we only need to check 
\[\sum_{k\in\Lambda^+}\omega_k\pa{k-5\ell_{K_0}-x_0}\leq 0,\]
\emph{i.e.}~that those particles are on average closer to $x_0$ than they are to the other extremity of $\Lambda^+$. 
Denote $\hat{k}>x_0$ the zero-range site where the $\widehat{n}$-th particle to the right of $x_0$ is found (\textit{i.e.}~in the exclusion configuration, there are $\hat{k}$ empty sites between the empty site $y_{x_0}$ and the $\widehat{n}$-th particle to the right of $y_{x_0}$).  

To prove $(ii)$ it is enough to show that with probability going to $1$, one has
\begin{equation}
\label{eq:omegatilde}
\sum_{k=x_0}^{\hat{k}}\widetilde{\omega}_k(0)\pa{k-5\ell_{K_0}-x_0}\leq0
.\end{equation}
We are on the event $E_N^c$, therefore each empty site is at most at a distance $\log^2 N$ of its expected position in the initial exclusion configuration. Since $\widetilde{\omega}_k(0)=y_{k+1}-y_{k}-1$, 
\[\sum_{k=x_0}^{\hat{k}-1}(y_{k+1}-y_k-1) \leq \widehat{n}\leq\sum_{k=x_0}^{\hat{k}}(y_{k+1}-y_k-1),\]
which rewrites, on $E_N^c$,
\begin{equation}
\label{eq:khat1}
0\leq \widehat{n}-(y_{\hat{k}}-y_{x_0}-(\hat{k} - x_0))\leq y_{\hat k+1}-y_{\hat k}\leq 2\log^2 N+N\sup_k(u_{k+1}-u_k).
\end{equation}

Recall that  $ \ell_{K_0}\leq  \ell_N=N^{3/4}$, using  \eqref{eq:uk} and \eqref{eq:taylorv1} yields that for any $k\in \Lambda^+\cup{\{x_0\}}$,
\[u_k=u_*+2\Big(\frac{k}{N}-v_*\Big)+4\partial_u\rho^{\rm ini}(u^*)\Big(\frac{k}{N}-v_*\Big)^2+\mc O(N^{-3/4}).\]
For any integer $j$, shorten $j'=j-k_*=j-Nv_*+\mc O(\log^2 N)$, on $E_N^c$, the identity above yields 
\begin{equation}
\label{eq:khat2}
 y_k-y_{k_*}=2 k' + c_0\frac{ {k'}^2}{N}+\mc O(N^{1/4 }),
\end{equation}
where $c_0=4\partial_u \rho^{\rm ini}(0)>0$ by Assumption \eqref{ass:2reg}. 

Using \eqref{eq:khat2}, we now rewrite the left hand side of \eqref{eq:omegatilde},
\begin{align}
\label{eq:estimdrift}
\sum_{k=x_0}^{\hat{k}}&\widetilde{\omega}_k(0)\pa{k-5\ell_{K_0}-x}
= \sum_{k=x_0}^{\hat{k}}(y_{k+1}-y_{k}-1)\pa{k'-5\ell_{K_0}-x_0'}\nonumber\\
&= \sum_{k=x_0}^{\hat{k}}k'(y_{k+1}-y_{k}-2)-(5\ell_{K_0}+x_0')\widehat{n}+\frac{(\widehat{k}'-x_0')(\widehat{k}'+x_0')}{2}+\mc O(\ell_N)\nonumber\\
&=\frac{c_0}{N} \sum_{k=x_0'}^{\hat{k}'}k(2k+1)-(5\ell_{K_0}+x_0')\widehat{n}+\frac{(\widehat{k}'-x_0')(\widehat{k}'+x_0')}{2}+\mc O(\ell_N)\nonumber\\
&=\frac{2c_0}{N} \sum_{k=x_0'}^{\hat{k}'}k^2-(5\ell_{K_0}+x_0')\widehat{n}+\frac{(\widehat{k}'-x_0')(\widehat{k}'+x_0')}{2}+\mc O(\ell_N).
\end{align}
Note that by definition, $x_0'=-\ell_{K_0}$. Using equations \eqref{eq:khat1} and \eqref{eq:khat2}, one can easily check that $\widehat{n}=(\widehat{k}'-x_0')(1+c_0(\widehat{k}'+x_0')/N)+\mc O(N^{1/4})$, so that by Taylor expansion
\[\widehat{k}'=x_0'+\widehat{n}-\frac{c_0\widehat{n}}{N}(2x_0'+\widehat{n})+\mc O(N^{1/4})=9\ell_{K_0}-c_1\frac{\ell_{K_0}^2}{K_0}+\mc O(N^{1/4}),\]
where we denoted $c_1=80c_0\bar v-10c_*>70c_0\bar v$ by definition \eqref{eq:DefC*} of $c_*$. After elementary computations, the dominant terms of order $\mc O(\ell_{K_0}^2)$ in \eqref{eq:estimdrift} cancel out, so that only remain the terms in $\mc O(\ell_{K_0}^3/K_0)$, which rewrite 
\begin{align*}
\sum_{k=x_0}^{\hat{k}}\widetilde{\omega}_k(0)\cro{k-5\ell_{K_0}-x}&=\frac{2c_0\bar v}{3K}\pa{\widehat{k}^{'3}-x^{'3}}-(4c_*+9c_1)\frac{\ell_{K_0}^3}{K_0}+\mc O(\ell_N)\\
&\leq c_0\bar v(2\cdot 3^8+1/3-9\cdot 70  )\frac{\ell_{K_0}^3}{K_0}+\mc O(\ell_N).
\end{align*}
Since the constant in parenthesis is negative, and since $\ell_{K_0}^3/K_0\gg\ell_N$, this proves the result. 

As already pointed out, we will not detail the general cases $x\in \mathbf{A}_{K_0}$, we simply sketch out why the problem is the same. Consider a macroscopic point $v\in[v_*, \bar v]$ and consider the zero-range configuration in a mesoscopic box of size $10\ell_{K_0}$ to the right of site $Kv/\bar v$. If $v\in(v_*, \bar v)$ then the zero-range density $\alpha_v=1/(1-\rho^{\rm ini}(u(v)))$ in a mesoscopic box $\Lambda_{v,K_0}$ of size $10\ell_{K_0}$ is strictly larger than $1$. In particular, since on $E_N^c$, $\alpha_{K_0}\simeq \alpha_{\bar v N}=1+o(1)$, the majority of the $\widehat{n}=10\ell_{K_0}\alpha_{K_0}$ particles closest to $x$ to its right are closer to $x$ than to the other extremity of $\Lambda_{v,K_0}$. The only problematic cases are therefore close to the boundaries $0$, $v_*$. We treated the most extreme of those cases, in which the site $x$ considered is in the subcritical phase, and at a distance $\ell_{K_0}$ of the supercritical phase, the other cases can be treated analogously.

\section{Existence of macroscopic interfaces: proof of Proposition~\ref{ass:stronguniqueness}}\label{a:strong-existence}

In this section, we lay out the proof for the existence of macroscopic interfaces for the weak solution (in the sense of Definition \ref{Def:PDEweak}) of \eqref{eq:PDEstrong}. 
The proof we present here is adapted from Meirmanov \cite{Meirmanov} to our periodic setting. It contains no significant mathematical novelty w.r.t.~\cite{Meirmanov}; we include it here for the sake of completeness. The main difficulty of the proof is that the interface speeds diverge as $t\rightarrow 0^+$. To solve this issue, we approximate the initial profile $\rho^{\rm ini}$ by 
\[\rho^{{\rm ini},n}(u)=\rho^{\rm ini}(u)\cro{1-\frac{1}{n}{\bf 1}_{\{\rho^{\rm ini}(u)<\frac12\}}},\ n\in\N, n \geqslant 3.\]
We first claim that, thanks to the discontinuity of the density at the interfaces, the Stefan problem with initial condition $\rho^{{\rm ini},n}$ admits a classical solution.

\begin{lemma}\label{lem:strong-existence-disc}
Let $\tilde\rho^{{\rm ini}}:\T\rightarrow [0,1]$ such that 
\begin{itemize}
\item  $\tilde\rho^{{\rm ini}}$ is $\mathcal{C}^2$ on $(0,u_*)$ and $[u_*,0]$ with bounded derivatives, 
\item  $\tilde\rho^{{\rm ini}}\leq \frac12-\delta$ on $(0,u_*)$ for some $\delta>0$, 
\item  $\tilde\rho^{{\rm ini}}\geq \frac12$ on $[u_*,0]$. 
\end{itemize}

Then  there exists a classical solution   $(\rho,u_-, u_+)$  to the Stefan problem \eqref{eq:PDEstrong} with initial data $\tilde\rho^{{\rm ini}}$, i.e.\ $\rho:\R_+\times\T\rightarrow [0,1]$, $u_\pm:\R_+\rightarrow\T$ such that
\begin{enumerate}
\item $u_-$ (resp.\ $u_+$) is non-decreasing (resp.\ non-increasing), with $u_-(0)=0$ and $u_+(0)=u_*$;
\item there exists $\tau\in\R_+\cup\{\infty\}$ such that $u_-(t)=u_+(t)$ iff $t\geq \tau$, and $u_\pm$ are constant on $[\tau,\infty)$; the time $\tau$ is called the \emph{merging time};
\item $\mathcal{H}\circ \rho$ is Lipschitz;
\item  for any $t \in \R_+$, if $u\in (u_-(t),u_+(t))$, then $\rho_t(u)=\tilde\rho^{{\rm ini}}(u)$;
\item if $\big\{t\leq \tau$ and $u\in (u_+(t),u_-(t)) \big\}$ or if $t>\tau$, then 
\[ \rho_t(u)>\tfrac12, \qquad \text{and} \qquad \partial_t \rho_t(u)=\partial_u^2 \mathcal{H}(\rho_t(u));\]
\item if $t\in(0,\tau)$, then \[u_\pm'(t)=-\frac{4\partial_u\rho_t(u_\pm(t)^\pm)}{\frac12-\tilde\rho^{{\rm ini}}(u_\pm(t))}.\]  
\end{enumerate}
\end{lemma}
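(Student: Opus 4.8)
The plan is to construct the classical solution by straightening the two free boundaries, solving the resulting uniformly parabolic problem on a fixed domain by a contraction argument, and then propagating and extending this local solution up to the merging time, using the comparison principle throughout. Since $\mathcal{H}'(\rho)=\rho^{-2}$ lies in $[1,4]$ for $\rho\in[\tfrac12,1]$, equation \eqref{eq:PDEstrong} is \emph{non-degenerate} on the active range of densities; the only genuinely singular feature---the diverging interface velocities as $t\to0^+$---is ruled out here precisely by the hypothesis $\tilde\rho^{\rm ini}\leq\tfrac12-\delta$ on $(0,u_*)$, which keeps the denominator $\tfrac12-\tilde\rho^{\rm ini}$ in the Stefan condition bounded away from $0$. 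It is convenient to use $w:=\mathcal{H}(\rho)$ as unknown in the active region $\T\setminus[u_-(t),u_+(t)]$: writing $G=\mathcal{H}^{-1}$, the equation reads $G'(w)\,\partial_t w=\partial_u^2 w$ with $G'=\rho^2\in[\tfrac14,1]$, the interface condition $\rho=\tfrac12$ becomes the homogeneous Dirichlet condition $w=0$, and the velocity formula of point \emph{(6)} becomes $u_\pm'=-\,\partial_u w(u_\pm^\pm)/(\tfrac12-\tilde\rho^{\rm ini}(u_\pm))$, which one recognizes as the Rankine--Hugoniot condition $s'[\rho]=[j]$ for the conservation law $\partial_t\rho+\partial_u j=0$, $j=-\partial_u\mathcal{H}(\rho)$, with the frozen side carrying zero flux.

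First I would establish local-in-time existence by a fixed point. On a short interval $[0,T_0]$, let $\mathcal{K}$ be the closed convex set of interface pairs $(u_-,u_+)\in C^{1+\gamma/2}([0,T_0])^2$ with $u_-(0)=0$, $u_+(0)=u_*$, prescribed initial velocities obtained by evaluating point \emph{(6)} at $t=0$, and uniform Hölder bounds. Given $(u_-,u_+)\in\mathcal{K}$, a fixed smooth diffeomorphism carries the active arc onto $[0,1]$ and turns the Dirichlet problem for $w$ into a uniformly parabolic equation on a fixed domain with coefficients depending on $u_\pm$ and $u_\pm'$; Schauder theory then yields a unique solution $w\in C^{1+\gamma,2+\gamma}$ up to the boundary, with boundary flux $\partial_u w(t,u_\pm(t)^\pm)$ controlled in $C^{\gamma/2}([0,T_0])$. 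Feeding this flux into the Stefan ODE of point \emph{(6)} and integrating defines a map $(u_-,u_+)\mapsto(\tilde u_-,\tilde u_+)$ on $\mathcal{K}$; the bound $\tfrac12-\tilde\rho^{\rm ini}\geq\delta$ makes the right-hand side of the ODE Lipschitz in the flux, and a standard estimate shows this map is a contraction for $T_0$ small, whose unique fixed point is a local classical solution.

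Next I would make the solution global up to merging and verify the qualitative properties. The comparison principle for the non-degenerate equation gives $\tfrac12\leq\rho\leq1$ on the active region, so the a priori bounds underlying the Schauder estimates are preserved and the local solution continues as long as $u_-(t)<u_+(t)$; moreover $\rho$ cannot dip below $\tfrac12$ in the interior, so no new frozen pocket appears and the topology (a single frozen arc bounded by two interfaces) persists. Monotonicity---point \emph{(1)}---follows from Hopf's lemma: at the interface $\rho$ attains its minimum $\tfrac12$ over the active region, forcing $\partial_u\rho(u_-^-)\leq0$ and $\partial_u\rho(u_+^+)\geq0$, whence $u_-'\geq0$ and $u_+'\leq0$ through point \emph{(6)}. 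Setting $\tau:=\sup\{t:u_-(t)<u_+(t)\}\in\R_+\cup\{\infty\}$ then yields points \emph{(2)}, \emph{(4)} and \emph{(5)}; when $\tau<\infty$ the two interfaces meet, the frozen region vanishes, $\rho\geq\tfrac12$ solves \eqref{eq:PDEstrong} on the whole periodic domain for $t\geq\tau$, and a conservation-of-mass argument shows the frozen set cannot reappear, so that $u_\pm$ remain constant thereafter. Finally, point \emph{(3)} follows by gluing: on the active side $\mathcal{H}(\rho)=w\to0$ with gradient bounded by the Schauder estimate, while on the frozen side $\mathcal{H}(\tilde\rho^{\rm ini})\equiv0$, so $\mathcal{H}\circ\rho$ is continuous across each interface with bounded one-sided derivatives, hence globally Lipschitz.

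The main obstacle is the contraction step. The delicate point is to control the boundary flux $\partial_u w(t,u_\pm(t)^\pm)$---and its dependence on the interface curves $(u_-,u_+)$---in Hölder norm uniformly up to the moving boundary, which requires Schauder estimates up to the straightened corner together with the parabolic compatibility condition at $(t,u)=(0,u_\pm(0))$, where the Dirichlet value $\tfrac12$ must match the initial datum. The hypothesis that $\tilde\rho^{\rm ini}$ reaches $\tfrac12$ at the active endpoints $0,u_*$ and is $C^2$ with bounded derivatives on each side is exactly what makes this compatibility, and the ensuing Hölder continuity of the flux, hold. Carrying these estimates through the fixed-point iteration, rather than any conceptual novelty, is where the real work lies, and is the reason the argument is adapted from Meirmanov \cite{Meirmanov}.
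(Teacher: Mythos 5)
Your strategy is a legitimate classical route, but it is genuinely different from the one the paper takes, and it leaves the crux unproved. The paper also sets up a fixed-point problem for the pair of interfaces, but on the compact convex set $\mathfrak{U}$ of monotone \emph{Lipschitz} curves (with Lipschitz constant tied to $M=\sup|(\tilde\rho^{\rm ini})'|$), and invokes \emph{Schauder's} fixed point theorem rather than Banach's. This only requires \emph{continuity} of the map $(u_-,u_+)\mapsto\big(\int_0^t\partial_u\rho_s(u_-(s)^-)ds,\int_0^t\partial_u\rho_s(u_+(s)^+)ds\big)$, which the paper obtains by a weak-formulation trick: testing $\partial_t\rho-\partial_u^2\mathcal{H}(\rho)=0$ against the affine functions $g_t(u)=\frac{u-u_+(t)}{u_-(t)-u_+(t)}\mathbf{1}_{[u_+(t),u_-(t)]}$ and applying the divergence theorem converts the time-integrated boundary flux into bulk integrals of $\rho$ and $g$ (identity \eqref{eq:jminus}), whose continuity in $(u_-,u_+)$ follows from two applications of the maximum principle. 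This completely sidesteps pointwise control of $\partial_u\rho$ at the moving boundary as a function of the interface curves. Your contraction approach, by contrast, buys uniqueness of the fixed point and a more quantitative local theory, but it requires exactly the estimate you flag at the end: H\"older bounds on the boundary flux \emph{up to the corner} $(0,u_\pm(0))$ together with Lipschitz dependence of that flux on $(u_-,u_+)$ in the $C^{1+\gamma/2}$ norm. That is the entire difficulty of the lemma, and asserting that it "is where the real work lies" does not discharge it; note also that the corner compatibility you invoke ($\tilde\rho^{\rm ini}$ reaching $\tfrac12$ at the active endpoints) is not among the stated hypotheses of the lemma (only $\tilde\rho^{\rm ini}\geq\tfrac12$ on $[u_*,0]$ is assumed), so your Schauder-up-to-the-corner step would need either an extra assumption or a separate treatment of the initial layer. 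Your Hopf-lemma argument for monotonicity and your observations on continuation past merging are correct and essentially implicit in the paper (monotonicity is built into $\mathfrak{U}$ and preserved by the sign of the flux), but as written the proposal replaces the paper's soft continuity argument with a hard quantitative one that it does not actually carry out.
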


We defer the proof of Lemma~\ref{lem:strong-existence-disc} to see how this result can lead us to Proposition~\ref{ass:stronguniqueness}.
Let us denote $(\rho^n,u^n_-, u^n_+)$ the\footnote{It is easy to check that classical solutions are also weak solutions in the sense of Definition~\ref{Def:PDEweak}, and the uniqueness of $\rho^n$ is therefore guaranteed by Proposition~\ref{prop:uniqueness}.} classical solutions provided by Lemma~\ref{lem:strong-existence-disc} when $\tilde\rho^{{\rm ini}}=\rho^{{\rm ini},n}$. 
We will show that the interfaces $u_\pm^n$ converge, and that the limits satisfy the properties required in Proposition~\ref{ass:stronguniqueness}. To that end, we exploit a monotonicity property of the interfaces defined by Lemma~\ref{lem:strong-existence-disc}:

\begin{lemma}\label{lem:monotone}
Let $\rho^{{\rm ini},>},\rho^{{\rm ini},<}$ two initial profiles satisfying the assumptions of Lemma~\ref{lem:strong-existence-disc}, such that $\rho^{{\rm ini},<}\leq\rho^{{\rm ini},>}$. 

Let $(\rho^>,u^>_-, u^>_+),(\rho^<,u^<_-, u^<_+)$ be the associated classical solutions with merging times $\tau^>,\tau^<$ respectively. 

Then $\rho^<\leq\rho^>$ and $[u^>_-,u^>_+]\subset[u^<_-,u^<_+]$ (in particular $\tau^>\leq\tau^<$).
\end{lemma}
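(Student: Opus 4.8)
The plan is to reduce the whole statement to the single pointwise comparison $\rho^<\leq\rho^>$ and then read off the geometric conclusions for free. Indeed, since by Lemma~\ref{lem:strong-existence-disc} both $(\rho^>,u^>_-,u^>_+)$ and $(\rho^<,u^<_-,u^<_+)$ are \emph{classical} solutions, the frozen set at time $t$ is exactly the open interval $(u_-(t),u_+(t))$, on which $\rho_t=\tilde\rho^{\rm ini}\leq\frac12-\delta$, while the active set is exactly $\{\,u:\rho_t(u)>\frac12\,\}$. Hence, once we know that $\rho^<_t(u)\leq\rho^>_t(u)$ for a.e.\ $u\in\T$, we immediately get the inclusion of frozen sets $\{\rho^>_t<\frac12\}\subseteq\{\rho^<_t<\frac12\}$, that is $(u^>_-(t),u^>_+(t))\subseteq(u^<_-(t),u^<_+(t))$. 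Recalling that $u^>_-(0)=u^<_-(0)=0$ and $u^>_+(0)=u^<_+(0)=u_*$, and that $u_-$ (resp.\ $u_+$) is non-decreasing (resp.\ non-increasing), this reads $u^<_-(t)\leq u^>_-(t)$ and $u^>_+(t)\leq u^<_+(t)$, i.e.\ $[u^>_-(t),u^>_+(t)]\subseteq[u^<_-(t),u^<_+(t)]$. Since the merging time $\tau$ is the first time the frozen interval collapses to a point, nesting of the frozen intervals forces $\tau^>\leq\tau^<$, as claimed.

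Everything therefore rests on the comparison $\rho^<\leq\rho^>$, which is the main point. The natural tool is the $L^1$-comparison principle for the degenerate parabolic equation \eqref{eq:PDEstrong}, which is precisely the mechanism underlying the uniqueness statement of Proposition~\ref{prop:uniqueness}. Both $\rho^<$ and $\rho^>$ are weak solutions in the sense of Definition~\ref{Def:PDEweak} (classical solutions are weak solutions, as recalled in the footnote opening this appendix), associated with the ordered initial data $\rho^{{\rm ini},<}\leq\rho^{{\rm ini},>}$. I would run the standard Kato-type computation: writing $R=\rho^>-\rho^<$ and $W=\mathcal{H}(\rho^>)-\mathcal{H}(\rho^<)$, the monotonicity of $\mathcal{H}$ guarantees that $R$ and $W$ have the same sign pointwise, so subtracting the two weak formulations, testing against a regularization of $\mathrm{sign}^-(R)$, and using that the degenerate diffusion term carries the correct sign, one obtains $\frac{d}{dt}\int_\T(\rho^<_t-\rho^>_t)^+\,du\leq 0$. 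Since $(\rho^{{\rm ini},<}-\rho^{{\rm ini},>})^+=0$, this nonnegative quantity vanishes for all $t$, giving $\rho^<_t\leq\rho^>_t$ a.e. This is exactly Uchiyama's argument \cite[Theorem~6]{Uchiyama}; alternatively one may quote the comparison principle for classical Stefan solutions in Meirmanov \cite{Meirmanov}.

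The hard part is genuinely the comparison step, and the difficulty is structural: because $\mathcal{H}$ is flat on $[0,\frac12]$ the equation is degenerate, so a naive application of the classical strong maximum principle on the whole torus is not available, and because the two classical solutions are \emph{discontinuous} across their (distinct) free boundaries one cannot directly compare density gradients at the interfaces through the Stefan condition in Lemma~\ref{lem:strong-existence-disc}(6). The $L^1$-contraction route circumvents tracking the moving interfaces altogether by working at the level of the weak formulation; the only delicate point to check is that the regularization of $\mathrm{sign}^-(R)$ interacts correctly with the degenerate diffusion term in the limit, which is ensured precisely by the pointwise sign agreement of $R$ and $W$ coming from the monotonicity of $\mathcal{H}$.
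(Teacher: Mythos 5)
Your reduction of the full statement to the single pointwise comparison $\rho^<\leq\rho^>$ is exactly right, and your first paragraph supplies the deduction of the interval nesting that the paper leaves implicit (the frozen interval of a classical solution is precisely $\{\rho_t<\tfrac12\}$, so ordered densities give nested frozen intervals and hence $\tau^>\leq\tau^<$). For the comparison step itself, however, you take a genuinely different route from the paper. You propose the Kato-type $L^1$-contraction argument: test the equation for $R=\rho^>-\rho^<$ against a regularized sign function and use the monotonicity of $\mathcal{H}$ to sign the diffusion term. The paper instead runs a Holmgren/Ole\u{\i}nik duality argument: it sets $\chi_t=\frac{\mathcal{H}(\rho^>_t)-\mathcal{H}(\rho^<_t)}{\rho^>_t-\rho^<_t}\mathbf{1}_{\{\rho^>_t\neq\rho^<_t\}}\geq 0$, solves the $\varepsilon$-regularized backward adjoint problem $\partial_t\psi^\varepsilon=(\chi_{T-t}+\varepsilon)\partial_u^2\psi^\varepsilon$ with nonnegative data $g$, plugs $\varphi_t=\psi^\varepsilon_{T-t}$ into the weak formulation of the difference, and kills the $\varepsilon$-error via the energy bound $\varepsilon\|\partial_u^2\psi^\varepsilon\|_{L^2}^2\leq\tfrac12\|\partial_u g\|_{L^2}^2$; the only maximum principle invoked is the non-degenerate one for $\psi^\varepsilon$. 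Both routes are standard and valid here, but note one imprecision in yours: to make the diffusion term signed you must test against a regularization of $\mathrm{sign}^-$ of $W=\mathcal{H}(\rho^>)-\mathcal{H}(\rho^<)$, not of $R$ (testing with $q_\delta(R)$ produces $\int q_\delta'(R)\,\partial_uW\,\partial_uR$, which has no sign), and then the identification of $\int\partial_tR\,\mathrm{sign}^-(W)$ with $\frac{d}{dt}\int R^-$ on the degeneracy set $\{W=0,\,R\neq 0\}$ is exactly the delicate point; it can be handled here because Lemma~\ref{lem:strong-existence-disc} gives $\mathcal{H}\circ\rho$ Lipschitz and the solutions are classical, but it is not automatic for general weak solutions. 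The paper's duality argument buys a cleaner treatment of precisely this degeneracy (the set $\{W=0\}$ simply contributes $\chi_t=0$), at the cost of introducing the auxiliary adjoint problem; your route, once the sign-function issue is fixed, yields the stronger $L^1$-contraction $\|(\rho^<_t-\rho^>_t)^+\|_{L^1}\leq\|(\rho^{{\rm ini},<}-\rho^{{\rm ini},>})^+\|_{L^1}$.
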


A consequence of Lemma~\ref{lem:monotone} is that $\rho^n,u^n_-, u^n_+$ are monotone in $n$. Since they are also bounded, they have limits which we call $\rho,u_-,u_+$ respectively. The monotone convergence Theorem straightforwardly yields that   $\rho$, thus defined, is the weak solution of \ref{Def:PDEweak} with initial profile $\rho^{{\rm ini}}$. Letting $\tau:=\inf\{t\geq 0\ \colon\ u_+(t)=u_-(t)\}$, the properties of Proposition~\ref{ass:stronguniqueness} are simple consequences of the above construction. Following \cite[Theorem 2, p.~151]{Meirmanov}, one can actually show that $(\rho,u_-, u_+)$ is also a classical solution with initial profile $\rho^{{\rm ini}}$, however since we do not require it here, we will not expand further.

\medskip

We now conclude by giving the proofs of Lemmas~\ref{lem:strong-existence-disc} and \ref{lem:monotone}.

\begin{proof}[Proof of Lemma~\ref{lem:strong-existence-disc}]
This is very close to \cite[Lemma 3, p.151]{Meirmanov} and seems to be a standard result for free boundary problems. We sketch here a proof for completeness, mainly taken from \cite{And} and adapted to our periodic setting. Part of the statement is that the derivatives in \emph{(5)} and \emph{(6)} are well defined. The main idea is to construct the interfaces as solutions to a fixed point problem.

Fix $T>0$ and let $M=\sup\{|(\tilde\rho^{{\rm ini}})'(u)|,u\in\T\setminus\{0,u_*\}\}$. Let $\mathfrak{U}$ be the space of functions $u_-,u_+ : [0,T] \to \T $ which satisfy the following conditions: \begin{itemize}
\item $u_\pm$ are Lipschitz-continuous, with Lipschitz constant bounded by $M$,
\item $u_-$ (resp.~$u_+$) is non-decreasing (resp.~non-increasing),
\item $u_-(0)=0$, $u_+(0)=u_*$, and $u_-\leq u_+$. \end{itemize}
Any such function is differentiable almost everywhere in $[0,T]$. With a slight abuse of notations, we denote by  $\|u'_\pm\|_\infty\leq M$ the maximal Lipschitz constant of $u_\pm$. 
Note that $\mathfrak{U}$ is a convex compact subset of the Banach set $\mathcal{C}([0,T])\times \mathcal{C}([0,T])$ endowed with the norm 
\[\vertiii{u_-, u_+\vphantom{\big(}}:=\max \big\{ \|u_+\|_\infty, \|u_-\|_\infty, \|u'_+\|_\infty, \|u'_-\|_\infty\big\}.\]
For any $(u_-,u_+)\in\mathfrak{U}$, let \[\tau:=\inf\{t\geq 0\ \colon\ u_-(t)=u_+(t)\}\] and $\rho:[0,T]\times \T\rightarrow [0,1]$ be defined as follows: first, $\rho_0(u) = \tilde\rho^{\rm ini}(u)$ for any $u \in\T$, and \begin{itemize}
\item if $t \leqslant \tau$, 
\begin{equation}
\label{eq:Dirichletsolution}
\left\{\begin{split}
\forall\; u \in (u_+(t),u_-(t)), & \quad \partial^2_u\mathcal H(\rho_t(u))=\partial_t\rho_t(u)\\
\forall\; u \in (u_-(t),u_+(t)),&\quad \rho_t(u)=\tilde\rho^{{\rm ini}}(u) \\
\text{and }&\quad \rho_t(u_\pm(t))=\tfrac12
\end{split}\right..
\end{equation}
\item if $t >\tau$, 
\[ \forall\; u \in \T, \quad \partial^2_u\mathcal H(\rho_t(u))=\partial_t\rho_t(u)\; .\]
\end{itemize}
Note that up to time $\tau$, assuming $u_\pm$ are fixed, $\rho$ is the solution to a Dirichlet problem \eqref{eq:Dirichletsolution} with moving boundaries. It is then standard to show (see \cite[Lemma 4.1]{And} for instance) that $\rho $ is well defined, and that its spatial derivatives are continuous up to the boundaries $\{(t,u_\pm(t)), t\in(0,T)\}$. Consider the transformation $\mathcal{T}:\mathfrak{U}\to\mathfrak{U}$ defined as follows: first, let 
\begin{align*}
\mathcal{T}^1(u_-,u_+)(t) & : = -\int_0^t \frac{4\partial_u\rho_s(u_-(s)^-)}{\frac12-\tilde\rho^{{\rm ini}}(u_-(s))}ds,\\
\mathcal{T}^2(u_-,u_+)(t) & := u_*-\int_0^t \frac{4\partial_u\rho_s(u_+(s)^+)}{\frac12-\tilde\rho^{{\rm ini}}(u_+(s))}ds,
\end{align*} 
and define \[\tau^*:= \inf \big\{ t \geqslant 0 \; : \; \mathcal{T}^1(u_-,u_+)(t) = \mathcal{T}^2(u_-,u_+)(t)\big\}.\] 
 Then, let
\begin{equation*}
\mathcal{T}(u_-,u_+)(t)=\left\{
\begin{split}
\left(\mathcal{T}^1(u_-,u_+)(t),\;\mathcal{T}^2(u_-,u_+)(t)\right)&\quad\mbox{ for } t\leq \tau^*\\
(u_-,u_+)(t)\hspace{9em}&\quad\mbox{ for } t>\tau^* \vphantom{\bigg(}
\end{split}\right..
\end{equation*}
Then a fixed point for this transformation also yields the desired classical solution to our Stefan problem. By Schauder's fixed point Theorem, it is therefore enough to show that $\mathcal T$ is continuous w.r.t.~$\vertiii{\vphantom{A}\cdot}$. In turn, by regularity of $\tilde\rho^{{\rm ini}}$, and since $\tilde\rho^{{\rm ini}}$ is bounded away from $\frac12$ in $(u_-, u_+)$, it is enough to show that the application 
\[
(u_-,u_+)\in\mathfrak U\mapsto\left\{t\in[0,T] \mapsto\left(\int_0^t\partial_u\rho_s(u_-(s)^-)ds,\int_0^t\partial_u\rho_s(u_+(s)^+)ds\right)\right\}
\]
is continuous.

To that aim, let $t\leq T$, fix $(u_-,u_+)\in \mathfrak{U}$, and define 
\begin{equation}
\label{eq:gtu}
g_t(u):=\frac{u-u_+(t)}{u_-(t)-u_+(t)}{\bf 1}_{\{u\in [u_+(t),u_-(t)]\}}\in [0,1],
\end{equation}
where $u_-(t)-u_+(t)\in[1-u_*, 1]$ is the length of the diffusive phase, so that in particular $g_0(u)=\frac{u-u_*}{1-u_*}{\bf 1}_{\{u\in [u_*,1]\}}$. Since $g_s(u_+(s))=0$, $g_s(u_-(s))=1$,  by the divergence (or \emph{Gauss--Ostrogradsky}) Theorem,
\begin{align*}0=&\int_0^t\int_{u_+(s)}^{u_-(s)}g_s(u)\big[\partial_t\rho_s(u)-\partial^2_u\mathcal{H}(\rho_s(u))\big]duds\\
=&-\int_0^t\partial_u\mathcal{H}(\rho_s(u_-(s)^-))ds+\int_0^t\int_{u_+(s)}^{u_-(s)}\partial_ug_s(u) \partial_u\mathcal{H}(\rho_s(u))duds\\
& -\int_0^t\rho_s(u_-(s))u'_-(s)ds-\int_0^t\int_{u_+(s)}^{u_-(s)}\rho_s(u)\partial_tg_s(u)duds\\
&-\int_{u_*}^0g_0(u) \tilde\rho^{{\rm ini}}(u)du
+\int_{u_+(t)}^{u_-(t)}g_t(u)\rho_t(u)du.\end{align*}
Since $\mathcal{H}(\rho_s(u_+(s)))=\mathcal{H}(\rho_s(u_-(s)))=0$ and $\partial_u^2 g_s(u)=0$, a second integration by parts shows that the second term in the right hand side vanishes. Consequently, recalling that $\partial_u\mathcal{H}(\rho_s(u)) = 4 \partial_u \rho_s(u)$ and $\rho_s(u_-(s))=\frac12,$ we have 
\begin{align} 
\int_0^t\partial_u\rho_s(u_-(s)^-)ds=&-\tfrac14\int_{u_*}^0g_0(u)\tilde\rho^{{\rm ini}}(u)du+\tfrac14\int_{u_+(t)}^{u_-(t)}g_t(u)\rho_t(u)du \notag\\
&-\int_0^t\int_{u_+(s)}^{u_-(s)}\rho_s(u)\partial_tg_s(u)duds
-\tfrac18 u_-(t)\label{eq:jminus}.
\end{align}
First, we prove that  $\rho_t(u)$ is continuous w.r.t $\vertiii{\vphantom{A}\cdot}$.  
Given $(u_-,u_+),(\tilde u_-,\tilde u_+)$ two elements of $\mathfrak U$ and denoting by $(\rho,\tau),(\tilde\rho,\tilde \tau)$ the associated solutions to \eqref{eq:Dirichletsolution}, we first claim that, assuming for example $u_-(s)\leq \tilde u_-(s)$, 
\[\big|\rho_s(u_-(s))-\tilde\rho_s(u_-(s))\big|=\big|\tilde\rho_s(\tilde u_-(s))-\tilde\rho_s(u_-(s))\big|\leq M\big|u_-(s)-\tilde u_-(s)\big|.\]
The first identity follows from the fact that $\rho_s(u_-(s))=\tilde\rho_s(\tilde u_-(s))=\frac12$, whereas the second follows from the maximum principle applied to $\partial_u \tilde\rho$ in the moving boundary domain $\{(t,u), \; t\leq T, u\in[u_+(t), u_-(t)]\}$. We can now apply the maximum principle to $\rho-\tilde \rho$ in the domain  
\[\Lambda_t:=\Big\{(s,u) \colon s\in(0,t), u\in(u_+(s),u_-(s))\Big\}\cap \Big\{(s,u) \colon s\in(0,t), u\in(\tilde u_+(s),\tilde u_-(s)\Big\},\] 
to obtain that for all $(s,u)\in \Lambda_t$, 
\[\big|\rho_s(u)-\tilde \rho_s(u)\big|\leq M\max\big\{\big|u_-(s)-\tilde u_-(s)\big|,\big|u_+(s)-\tilde u_+(s)\big|\big\}.\] 
Denote by $g$, $\tilde g$ the functions given by \eqref{eq:gtu} resp.~for $(u_-,u_+)$, $(\tilde u_-,\tilde u_+)$. In particular, since $|\rho_s(u)|\leq1$, $|g_s(u)|\leq1$ and $g_s(u)$ is uniformly continuous in $(u_-, u_+)$, we obtain as wanted that 
\begin{multline*}
\abs{\int_{u_+(t)}^{u_-(t)}g_t(u)\rho_t(u)du-\int_{\tilde u_+(t)}^{\tilde u_-(t)}\tilde g_t(u)\tilde\rho_t( u)du}\\
\leq M'\max\big\{\big\|u_--\tilde u_-\big\|_\infty,\big\|u_+-\tilde u_+\big\|_\infty\big\},
\end{multline*}
so that $\int_{u_+}^{u_-}g_t(u)\rho_t(u)du$ is continuous in $(u_-,u_+)$ w.r.t. $\vertiii{\vphantom{A}\cdot}$. Since $\partial_t g$ is also continuous w.r.t. $\vertiii{\vphantom{A}\cdot}$, one obtains straightforwardly that  $\int_0^t\int_{u_+(s)}^{u_-(s)}\rho_s(u)\partial_tg_s(u)duds$ also is. This, together with \eqref{eq:jminus}, proves that $ \int_0^t\partial_u\rho_s(u_-(s)^-)ds$ is continuous in $(u_-,u_+)$. An analogous argument with 
\begin{equation}
\label{eq:gtubis}
g_t(u)=\frac{u_-(t)-u}{u_-(t)-u_+(t)}{\bf 1}_{\{u\in [u_+(t),u_-(t)]\}},
\end{equation}
proves that  $\int_0^t\partial_u\rho_s(u_+(s)^+)ds$ also is, and concludes the proof.
\end{proof}

\begin{proof}[Proof of Lemma~\ref{lem:monotone}]
This is  a simpler case of \cite[Theorem 10, p.30]{Meirmanov}, we give it for the sake of completeness. Define $\bar \rho^{{\rm ini}}=\rho^{{\rm ini},>}-\rho^{{\rm ini},<}$, as well as 
\[\bar \rho_t=\rho_t^{>}-\rho_t^{<},\quad \mbox{ and  }\quad \chi_t=\frac{\mathcal{H}(\rho^>_t)-\mathcal{H}(\rho^<_t)}{\rho_t^>-\rho_t^<}{\bf 1}_{\{\rho_t^>\neq  \rho_t^<\}}.\]
Since classical solutions of \eqref{eq:PDEstrong} are also weak solutions, for any smooth function  $\varphi\in C^{1,2}([0,T]\times \R)$, we have  
\begin{equation}
\label{eq:deltarho}
\big\langle \bar \rho_T,\varphi_T \big\rangle = \big\langle \bar \rho^{\rm ini}, \varphi_0 \big\rangle+\int_0^T\big\langle \bar \rho_t,\partial_t\varphi_t + \chi_t \partial_u^2\varphi_t \big\rangle dt.
\end{equation}
Fix $T>0$ and $\varepsilon>0$, and a bounded non-negative function $g:\T\to[0,+\infty)$,  we define $\psi^\varepsilon$ as the classical solution to the elliptic equation
\begin{equation}
\label{eq:EDPpsi}
\begin{cases}
\partial_t  \psi^\varepsilon_t=\big(\chi_{T-t}+\varepsilon\big) \partial_u^2 \psi^\varepsilon_t\\
\psi^\varepsilon_0=g
\end{cases}.
\end{equation}
Since the initial profile $g$ is non-negative, by maximum principle so is $\psi^\varepsilon_t$ for any $t\leq T$, so that \eqref{eq:deltarho} yields, choosing  $\varphi_t=\psi^\varepsilon_{T-t}$, 
\begin{equation}
\label{eq:holderpsirho}
\big\langle \bar \rho_T,g \big\rangle \geq - \varepsilon\int_0^T\big\langle\bar \rho_t, \partial_u^2\psi_{T-t} \big\rangle dt.
\end{equation}
where we used that $\bar \rho^{{\rm ini}}\geq 0$. Assume now that $g\in C^2(\T)$, multiplying the first line of \eqref{eq:EDPpsi} by $\partial_u^2 \psi^\varepsilon_t$, and integrating over $[0,T]\times \T$, yields 
\[\frac12\int_\T ( \partial_u g)^2 du =\frac12\int_\T (\partial_u \psi^\varepsilon_T)^2du
+\int_0^T\int_\T \big(\chi_{T-t}+\varepsilon\big) (\partial_u^2 \psi^\varepsilon_t )^2dudt,
\]
so  that in particular
\[\varepsilon\big\langle \partial_u^2 \psi^\varepsilon_T ,\partial_u^2 \psi^\varepsilon_T \big\rangle \leq \frac12\big\langle \partial_u g,\partial_u g \big\rangle, \]
and by H\"older's inequality, \eqref{eq:holderpsirho} yields 
\[
\big\langle\bar  \rho_T,g \big\rangle \geq - \sqrt{2\varepsilon}\int_0^T \big\langle\bar \rho_t,\bar \rho_t\big\rangle ^{1/2} \big\langle\partial_u g,\partial_u g\big\rangle^{1/2} dt\geq -T \sqrt{2\varepsilon} \big\langle\partial_u g,\partial_u g\big\rangle^{1/2}.
\]
Letting  $ \varepsilon\to 0$, we obtain that $\big\langle\bar  \rho_T,g \big\rangle\geq 0$ for any non-negative $g\in C^2(\T)$. We now choose non-negative functions $g^k\in C^2(\T)$ converging  in $L^2(\T)$ to ${\bf 1}_{\{\bar \rho_T<0\}}$ as $ k\to\infty$, to obtain that $\bar \rho_T\geq 0$ a.e., which concludes the proof.
\end{proof}

\bibliography{bibliography}
\bibliographystyle{plain}

\end{document}